\documentclass[oneside,11pt]{article}
\usepackage[margin=2cm]{geometry} 
\usepackage{amsthm, amsmath, amssymb}
\usepackage[mathlines]{lineno}
\usepackage[dvipsnames]{xcolor}
\usepackage[pagebackref,colorlinks,citecolor=Plum,urlcolor=Periwinkle,linkcolor=DarkOrchid]{hyperref}
\usepackage{graphicx}
\usepackage{enumerate}
\usepackage{nicefrac}
\newtheorem{theorem}{Theorem}[section]
\newtheorem{lemma}[theorem]{Lemma}
\newtheorem{corollary}[theorem]{Corollary}
\newtheorem{proposition}[theorem]{Proposition}
\newtheorem{conjecture}[theorem]{Conjecture}

\theoremstyle{definition}
\newtheorem{definition}[theorem]{Definition}
\newtheorem*{remark}{Remark}

\def\C{\mathcal{C}}
\def\P{\mathbb{P}}

\def\G{\mathcal{G}}

\title{Finding geodesics on graphs using reinforcement learning}
\author{Daniel Kious\thanks{Department of Mathematical Sciences, University of Bath, Claverton Down, BA2 7AY Bath, UK.\newline Email: \texttt{d.kious/c.mailler@bath.ac.uk}} \and C\'ecile Mailler\footnotemark[1] \thanks{CM is grateful to EPSRC for support through the fellowship EP/R022186/1.} \and Bruno Schapira\thanks{Aix-Marseille Universit\'e, CNRS, Centrale Marseille, I2M, UMR 7373, 13453 Marseille, France.\newline Email: \texttt{bruno.schapira@univ-amu.fr}}}


\newcommand{\bs}{\boldsymbol}
\newcommand{\sss}{\ensuremath{\scriptscriptstyle}}

\newcommand{\cec}{}
\newcommand{\dan}{}

\begin{document}
\maketitle 

\begin{abstract}
It is well-known in biology that ants are able 
to find shortest paths between their nest and the food 
by successive random explorations,
without any mean of communication other than the pheromones they leave behind them.
This striking phenomenon has been observed experimentally 
and modelled by different mean-field 
reinforcement-learning models in the biology literature.

In this paper, we introduce the first 
probabilistic 
reinforcement-learning model for this phenomenon.
In this model, the ants explore a finite graph in which 
two nodes are distinguished as the nest and the source of food.
The ants perform successive random walks on this graph, 
starting from the nest and stopped when first reaching the food, 
and the transition probabilities of each random walk 
depend on the realizations of all previous walks 
through some dynamic weighting of the graph.
{We discuss different variants of this model based on different reinforcement 
rules and show that slight changes in this reinforcement rule can lead to drastically different outcomes.}

We prove that, in two variants of this model
and when the underlying graph is, respectively, any series-parallel graph and 
a 5-edge non-series-parallel {\it losange} graph, 
the ants indeed {\it eventually find the shortest path(s)} 
between their nest and the food.
Both proofs rely on the electrical network method for random walks on weighted graphs 
and on Rubin's embedding in continuous time.
The proof in the series-parallel cases uses the recursive nature of this family of graphs, 
while the proof in the seemingly-simpler losange case turns out to be quite intricate: 
it relies on a fine analysis of some stochastic approximation, 
and on various couplings with standard and generalised P\'olya urns.
\end{abstract}

\section{Introduction and main results}

\subsection{Context and motivation}
In this paper, we introduce and analyse {two variants of a} stochastic, unsupervised, reinforcement-learning algorithm, which, given as an input a graph in which two nodes are marked, gives as output the shortest path(s) between the two marked nodes. 
This algorithm is inspired by mean-field models introduced in the biology literature as models for the behavior of foraging ants (see, e.g.\ \cite{book_ant,current_ants}): 
it has been widely empirically observed (see, e.g.,~\cite{ants_89, current_ants} for experiments) 
that a colony of ants is able to find shortest paths between their nest and the food.
Unsupervised reinforcement learning is widely 
proposed as a model for this phenomenon in the biology literature. 
Our contribution is to introduce a new probabilistic reinforcement-learning 
model for this phenomenon and prove that, in this model, 
the ants indeed find the shortest path between their nest and the food.

We consider a sequence of random walkers on a finite graph $\mathcal G = (V, \mathcal E)$ with two distinguished nodes $N$ and $F$ (for ``nest'' and ``food'' when the walkers are interpreted as ants).
At the beginning of time, all edges of $\mathcal G$ are given weight~1.
The idea is that the walkers explore the graph from~$N$ to $F$ one after each other, and the weights of the edges are updated after each walker reaches $F$. More precisely, for all $n\geq 1$, the $n$-th walker starts a random walk from $N$ and walks randomly on the graph until it reaches~$F$. At every step, the walker chooses one of the neighboring edges with probability proportional to their weights and crosses the chosen edge to the next vertex. Once the $n$-th walker has reached~$F$, we update the weights of the edges by adding~$1$ to {a subset of the trace of this walker.} {In this paper, we look at two possible rules for the choice of this subset of edges to reinforce:}
\begin{itemize}
\item In the {\it loop-erased} version of the model, we reinforce the loop-erased time-reversed trace of walker~$n$. This corresponds to how a hiker without a map would go back from $F$ to $N$ by walking backwards on their own trace, but avoiding unnecesary loops: when facing a choice between several edges they crossed on their way to $F$, they choose the edge that they crossed the earliest on their way forward.
\item In the {\it geodesic} version of the model, we reinforce the shortest path from~$N$ to $F$ inside the trace of the walker (i.e.\ we only look at the subgraph of all edges that were crossed by this specific walker). The case when there are several shortest paths presents some subtleties, on which we will come back when we will define more formally the model in Subsection~\ref{subsec:model} and when discussing our main results (see Subsection~\ref{sec:discussion}).
\end{itemize}
We call this stochastic process the loop-erased or geodesic ant process.

The interpretation of the model in terms of ants is as follows: (1) the ants only lay pheromones behind them on their way back from the food to the nest, (2) each ant goes back to the nest either following the loop-erasure of their forward trajectory reversed in time (for the loop-erased ant process), or following the shortest path in the subgraph that they have explored on the way forward (for the geodesic ant process), and (3) each ant can sense from the amount of pheromones how many of its predecessors have crossed an edge on their way back to the nest, and crosses each neighboring edge with probability proportional to this number. We conjecture that, following this simple unsupervised reinforcement-learning algorithm, the colony of ants {\it eventually finds the shortest path(s) between the nest and the food}, more precisely, asymptotically when time goes to infinity, a proportion~1 of all ants go from the nest to the food following a geodesic.

The difficulty of our analysis comes from different factors: (i) This is a linear reinforcement model: indeed, each ant chooses the next edge to cross with probability proportional to the number of previous ants that laid pheromones on it on their way back to the nest. Interestingly, the assumption that ants react linearly to pheromones is supported in the biology literature (see, e.g.\ \cite{ants_linear, ants_linsup}). In fact, one can easily find counter-examples that show that the same algorithm with super- or sub-linear reinforcement would not find the shortest path (see Subsection~\ref{sec:discussion}). (ii) The algorithm is a sequence of interacting reinforced random walks, and the reinforcement of the $n$-th random walk depends from the realisations of all previous ones.

Our main contribution is to prove that, as conjectured, the ants indeed find the shortest path if we assume that the underlying graph is either a series-parallel graph (as in~\cite{HamblyJordan}) whose ``source'' is the nest and whose ``sink'' is the source of food, or the 5-edge losange graph of Figure~\ref{fig:losange}. 
Surprisingly, the proof for the 5-edge losange graph is more intricate than the proof for the whole class of series-parallel graphs; we therefore expect that finding a proof that would hold for any underlying graph is a very challenging and interesting problem. Both our proofs rely 
heavily on the electric network method for random walks on graphs (see, e.g.,~\cite{LP} for an introduction to this method), 
and Rubin's embedding in continuous time (first introduced in~\cite{Davis}). 
The proof for series-parallel graphs also uses the inductive nature of this family of graphs; 
the proof for the losange graph relies on the fine analysis of different stochastic approximations 
(see, e.g., \cite{Duflo, Pemantle}).
Interestingly, we show that the losange case 
can be seen as an intricate coupling between two types of P\'olya urns (see, e.g.,~\cite{Pemantle} for a survey); 
a fact that is reminiscent of the proof of Pemantle and Volkov~\cite{PV99} 
of the localisation on five sites with positive probability of the {vertex-reinforced random walk} (see also~\cite{Tarres11,Tarres04}).

\subsection{Mathematical description of the model and statement of the main results} \label{subsec:model}
Let $\mathcal G = (V, E)$ be 
a finite graph with vertex set $V$ and edge set $E$. 
Let $N$ (the nest) and $F$ (the food) be two distinct vertices in $V$. 
{In this paper we consider two versions of the same model, which differ by their reinforcement rules.}  

We define the sequence $({\bf W}(n)=(W_e(n)\colon e\in E))_{n\geq 0}$ recursively as follows:
$W_e(0) = 1$ for all $e\in E$, and, for all $n\geq 1$:
\begin{itemize}
\item We sample a random walk $X^{\sss (n)} = (X_i^{\sss (n)})_{i\geq 0}$ on $\mathcal G$ that starts at $N$, is killed when first reaching $F$, and whose transition probabilities are: for all $i\geq 1$, for all $u, v\in V$, 
\[\mathbb P(X_i^{\sss (n)} = v\mid X_{i-1}^{\sss (n)} = u, {\bf W}(n-1))
= \frac{W_{\{u,v\}}(n-1)\bs 1_{u\sim v}}{\sum_{w\sim u}W_{\{u,w\}}(n-1)},\]
where $\{u,v\}$ is the {(unoriented)} edge between $u$ and $v$, 
and $u\sim v$ if and only if the edge $\{u,v\}$ is in $E$.
\item {Let $\mathcal G^{\sss (n)}$ be the trace of $X^{\sss (n)}$, that is the} subgraph of $\mathcal G$ 
obtained when removing from $\mathcal G$ all edges 
that the random walk $X^{\sss (n)}$ did not cross, 
and choose a path of edges $\gamma_n$ as follows: 
\begin{itemize}
\item {For the loop-erased ant process}, we imagine that the walker goes back from $F$ to $N$
by following its trajectory $X^{\sss (n)}$ backwards and avoiding loops as follows:
when the walker is at a vertex that was visited several times on the way forward, possibly coming from different edges at different times, it chooses to cross the edge that was crossed the earliest on the way forward.
We define $\gamma^{\sss (n)}$ as the set of edges crossed by the walker on its way back to the nest. 
\end{itemize}
\begin{remark}
Note that this construction selects a self-avoiding path between $F$ and $N$, which is in fact the loop-erased version of the backward trajectory. Indeed, if we assume that $X^{\sss (n)} = (X_0^{\sss (n)}=N,X_1^{\sss (n)},\dots,X_{K_n}^{\sss (n)}=F)$, for some $K_n\ge1$, and define the time-reversed trajectory $\overline X^{\sss (n)} = (X_{K_n-i}^{\sss (n)},0\le i\le K_n)$, then, by definition, we have that $\gamma^{\sss (n)}_i=\overline X^{\sss (n)}_{j_i}$ for $0\le i\le k_n$ for some $1\le k_n\le K_n$, where $j_0=0$ and $\gamma^{\sss (n)}_{k_n}=F$, for $0\le i\le k_n-1$, $j_{i+1}=\max\{j{\cec +1}: \overline X^{\sss (n)}_{j}=\overline X^{\sss (n)}_{j_i}\}$. This corresponds to the loop-erasure of $\overline X^{\sss (n)}$, as defined in \cite{LL}.
\end{remark}
\begin{itemize}
\item In the uniform-geodesic version of the model, we define $\gamma^{\sss (n)}$ as the shortest path from $N$ to $F$ in $\mathcal G^{\sss (n)}$; if there are several shortest path, we choose one of them uniformly at random.
\end{itemize}
\item For all $e\in E$, set $W_e(n+1) = W_e(n)+ \bs 1_{e\in\gamma_n}$.
\end{itemize}

\begin{figure}
\begin{center}
\includegraphics[width=10cm,page=1]{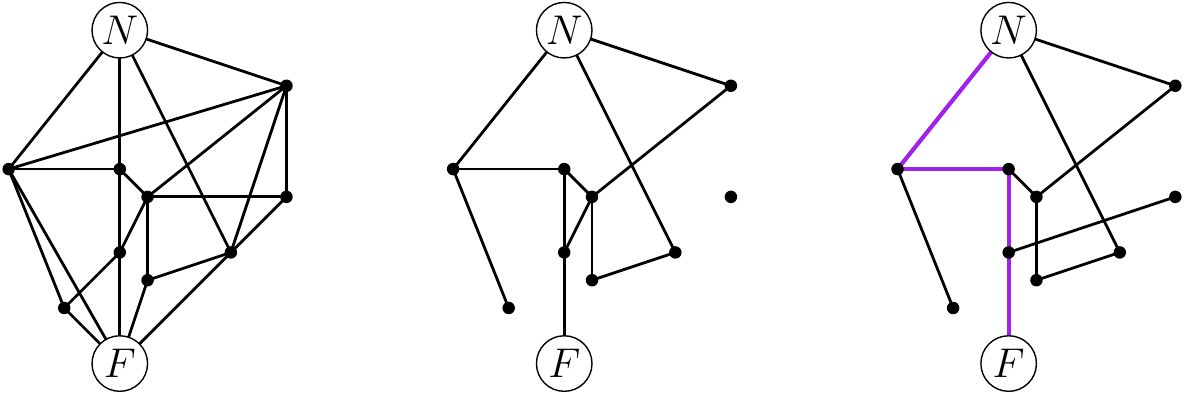}
\end{center}
\caption{First visual aid for the definition of the {uniform-geodesic ant process}. On the left is pictured a graph $\mathcal G$. In the middle is a possible realization of a graph $\mathcal G^{\sss (n)}$, the trace of the $n$-th random walk, and on the right is $\gamma^{\sss (n)}$ the unique geodesic from $N$ to $F$ in $\mathcal G^{\sss (n)}$.}
\label{fig:def1}
\end{figure}
\begin{figure}
\begin{center}
\includegraphics[width=10cm,page=2]{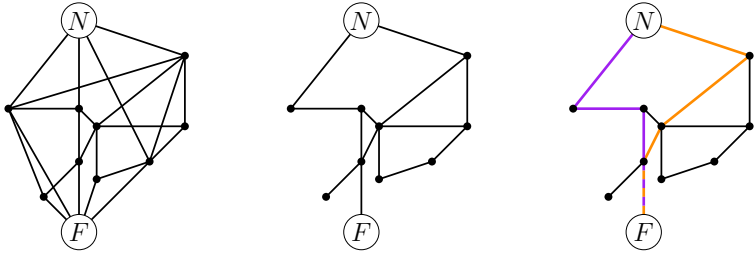}
\end{center}
\caption{Second visual aid for the definition of the {uniform-geodesic ant process}. On the left is pictured a graph $\mathcal G$. In the middle is a possible realization of a graph $\mathcal G^{\sss (n)}$, the trace of the $n$-th random walk. On the right, in orange and purple, are the two geodesics from $N$ to $F$ in $\mathcal G^{\sss (n)}$, and thus the two possible choices for $\gamma^{\sss (n)}$.}
\label{fig:def2}
\end{figure}

We conjecture that 
\begin{conjecture}\label{conj}
Let $\mathcal G = (V, E)$ be any finite graph in which two distinct nodes have been marked  as~$N$ and~$F$. Almost surely when $n\to+\infty$, for all $e\in E$,
\[\frac{W_e(n)}{n} \to \chi_e,\]
where $(\chi_e)_{e\in E}$ is a random vector such that:
\begin{itemize}
\item[{\rm (1)}] {For the loop-erased ant process, $\chi_e \neq 0$ almost surely} {\bf if and only if} the edge $e$ belongs to at least one of the geodesics from $N$ to $F$.
\item[{\rm (2)}] {For the uniform-geodesic ant process,  
$\chi_e\neq 0$ almost surely {\bf only if} the edge $e$ belongs to at least one of the geodesics from $N$ to $F$.}
\end{itemize}
Thus, if there is a unique geodesic $\gamma$ from $N$ to $F$ in $\mathcal G$, then almost surely $\chi_e = \bs 1_{e\in\gamma}$, for all $e\in E$, in the two versions of the model.
\end{conjecture}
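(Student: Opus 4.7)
The plan is to recast the dynamics of the normalised weights as a stochastic approximation and to analyse its equilibria via the electrical-network interpretation of the random walk, while acknowledging that a proof valid for arbitrary graphs appears to require genuinely new ideas beyond the two special cases treated in this paper.

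First I would set $\bar W(n) := W(n)/n$ and write the Robbins--Monro recursion
\[
\bar W(n+1) \;=\; \bar W(n) \;+\; \frac{1}{n+1}\bigl(\xi_{n+1} - \bar W(n)\bigr), \qquad \xi_{n+1}\;:=\;\bigl(\bs 1_{e\in\gamma^{\sss (n+1)}}\bigr)_{e\in E},
\]
with conditional drift $\mathbb E[\xi_{n+1}\mid\mathcal F_n] = \pi(\bar W(n))$, where $\pi_e(w)$ is the probability that $e$ lies in the reinforced path when edge weights are proportional to $w$. The long-time behaviour of $\bar W(n)$ should then be captured by the ODE $\dot x = \pi(x) - x$, and standard stochastic-approximation theory would give almost-sure convergence of $\bar W(n)$ to the equilibrium set of this ODE, provided one establishes sufficient regularity of $\pi$ (continuity, together with a mild contraction or Lyapunov structure on the simplex of weight vectors).

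Second, I would classify the equilibria using electrical networks. An equilibrium $x^\ast$ corresponds to assigning conductances $x^\ast_e$ to the edges so that the induced law of the reinforced path has marginals equal to $x^\ast$. For the uniform-geodesic version, the key claim is that at any equilibrium, $x^\ast_e > 0$ forces $e$ to lie on a geodesic from $N$ to $F$: heuristically, shorter paths carry smaller effective resistance and therefore attract the random walk, which in turn shrinks the probability that a non-geodesic edge appears in a shortest path of the trace $\mathcal G^{\sss (n)}$. Turning this heuristic into a proof requires a quantitative comparison of effective resistances along the support of $x^\ast$ at a candidate equilibrium. For the loop-erased version, a dual argument based on Rubin's embedding should yield the converse inclusion $x^\ast_e > 0$ for every geodesic edge $e$, since excursions that reach a geodesic contribute, after loop-erasure, to each of its edges with strictly positive rate.

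The hard part, in my view, is twofold. First, the map $\pi$ has no closed form: it depends on hitting distributions and loop-erasures of random walks on weighted graphs, which are notoriously delicate beyond very structured families, and whose dependence on $w$ need not be monotone in any transparent way. Second, even once the equilibria are classified, ruling out convergence to unstable fixed points (and understanding the possibly random distribution of the limit on the stable manifold) requires non-convergence arguments in the spirit of Pemantle's work on vertex-reinforced random walks. My overall strategy would therefore be a structural induction: decompose a general graph into series-parallel blocks glued along cut sets, re-use the series-parallel analysis inside each block, and handle the genuinely non-series-parallel components by extending the Pólya-urn coupling developed here for the losange. The main obstacle is that the reinforced paths $\gamma^{\sss (n)}$ are global objects crossing block boundaries, so any such coupling must be orchestrated globally rather than block by block — this is precisely what makes the losange case already so intricate, and what seems to prevent a direct extrapolation from these two proofs to the general conjecture.
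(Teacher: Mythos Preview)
The statement is a \emph{conjecture}: the paper does not prove it and explicitly presents it as open, so there is no ``paper's own proof'' to compare against. Your proposal is a heuristic programme, not a proof, and to your credit you say so yourself in the last paragraph.

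The framework you set up --- stochastic approximation for $\bar W(n)$, electrical-network computations for the drift $\pi$, Rubin's embedding for the loop-erased direction --- matches the toolkit the paper deploys in its two special cases (series-parallel graphs and the losange). But the steps you flag as ``the hard part'' are exactly where the argument has no content yet: (i) you give no mechanism to show that every equilibrium of $\dot x = \pi(x)-x$ with $x^\ast_e>0$ forces $e$ onto a geodesic --- the effective-resistance heuristic is suggestive but does not by itself exclude mixed equilibria supported partly on non-geodesic edges, and $\pi$ is neither monotone nor explicit; (ii) the converse claim for the loop-erased version (``strictly positive rate'' on geodesic edges) is asserted without argument, and in fact the paper's losange analysis shows that ruling out degenerate limits (Section~\ref{subsec:nondegenerate}) is the most delicate step even on a five-edge graph; (iii) the proposed structural induction via series-parallel blocks does not get off the ground, because an arbitrary two-terminal graph need not decompose into SP blocks glued along cut sets in any useful way --- the losange itself is a single irreducible non-SP block, and there is no reason the coupling built for it should generalise.

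In short: your outline is a fair summary of why one believes the conjecture, and it names the right ingredients, but none of the three decisive steps (equilibrium classification, non-convergence to unstable points, global decomposition) is carried out or even reduced to a concrete lemma. That is consistent with the paper's stance that the general statement remains open.
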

This indeed means that the ants {\it eventually find the shortest paths} between their nest and the source of food, because it implies that the probability that the $n$-th ant goes from the nest to the food through a geodesic path converges to~1 when $n\to+\infty$.

The difference between (1) and (2) is that, in the uniform-geodesic ant process, edges that belong to a geodesic may have limiting normalised weight $\chi_e$ that equal zero with positive probability: {\it the ants find at least one of the geodesics, but maybe not all of them.} In Proposition~\ref{prop:ex_Daniel}, we provide an example of a series-parallel graph where $\chi_e=0$ with positive probability for some edge~$e$ on a geodesic path.

\medskip
Our first main contribution is to prove that this conjecture is true for all series-parallel graphs {for the loop-erased ant process}. 
As their name suggests, series-parallel graphs are classical in electricity; in probability theory, they are the object of a famous and and still-open conjecture of Hambly and Jordan~\cite{HamblyJordan}.
They have two distinguished nodes called the ``source'' and the ``sink'', which we can naturally see as the the nest $N$ and the source of food $F$ in our context. 
\begin{figure}
\begin{center}
\includegraphics[width=8cm]{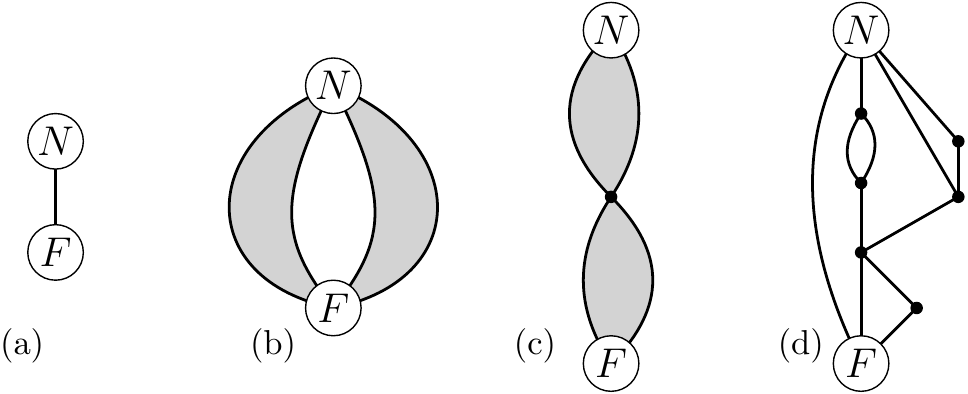}
\end{center}
\caption{The definition of series-parallel graphs: a {\sc sp} graph is either (a) the base case, or (b) two {\sc sp} graphs in parallel, or (c) two {\sc sp} graphs in series. (d) is an example.}
\label{fig:sp}
\end{figure}
\begin{definition}[See Figure~\ref{fig:sp}]
We define series-parallel ({\sc sp}) graphs recursively as follows: a series-parallel graph is
\begin{itemize}
\item either the single-edge graph (graph made of two vertices joined by one edge) with one node marked as the source and the other as the sink,
\item or two series-parallel graphs in series (i.e.\ we merge the sink of the first graph and the source of the second),
\item or two series-parallel graphs in parallel (i.e.\ we merge the two sources and the two sinks).
\end{itemize}
\end{definition}

\begin{theorem}\label{theo.SP}
For any {\sc sp} graph whose source and sink are respectively marked as~$N$ and~$F$, and for the {loop-erased ant process}, Conjecture~\ref{conj} is true, i.e.\ almost surely when $n\to+\infty$, for all $e\in E$,
 \[\frac{W_e(n)}{n} \to \chi_e,\]
where $(\chi_e)_{e\in E}$ is a random vector, such that $\chi_e \neq 0$ almost surely  if and only if the edge $e$ belongs to at least one of the geodesics from $N$ to $F$.  
\end{theorem}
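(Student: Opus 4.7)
We proceed by induction on the recursive SP structure of $\mathcal{G}$. The base case is a single edge: $\gamma^{(n)}$ is deterministic and $W_e(n)/n\to 1$, so $\chi_e=1$. For the inductive step, the crucial preliminary observation is that by reversibility of simple random walk on a weighted graph, the time-reversed trajectory of $X^{(n)}$ has the same law as a simple random walk from $F$ to $N$ on $\mathcal{G}$ with weights $\bs W(n-1)$; hence, as noted in the remark of the paper, $\gamma^{(n)}$ is distributed exactly as a loop-erased random walk (LERW) from $F$ to $N$ with those weights. Equivalently, by Wilson's algorithm, $\gamma^{(n)}$ is distributed as the unique $F$-to-$N$ path in a weighted uniform spanning tree (UST) of $\mathcal{G}$.

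\textbf{Series step.} If $\mathcal{G}=\mathcal{G}_1\circ\mathcal{G}_2$ glued at $M$, every spanning tree of $\mathcal{G}$ is obtained by gluing an ST of $\mathcal{G}_1$ to an ST of $\mathcal{G}_2$ at $M$, and the weighted UST of $\mathcal{G}$ is therefore the disjoint union of two independent weighted USTs of $\mathcal{G}_1$ and $\mathcal{G}_2$. Since any $F$-to-$N$ path must cross the cut vertex $M$ exactly once, $\gamma^{(n)}$ splits into two \emph{conditionally independent} sub-paths: a LERW from $F$ to $M$ in $\mathcal{G}_2$ and a LERW from $M$ to $N$ in $\mathcal{G}_1$, driven by the current weights in the respective subgraph. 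Consequently, the restriction of $(\bs W(n))_n$ to each $\mathcal{G}_i$ is itself the loop-erased ant process on $\mathcal{G}_i$ (with the inherited source and sink), and the inductive hypothesis applies. Since the geodesics of $\mathcal{G}$ are exactly the concatenations of geodesics of $\mathcal{G}_1$ with those of $\mathcal{G}_2$, the desired characterization of $\{\chi_e\neq 0\}$ transfers.

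\textbf{Parallel step.} If $\mathcal{G}=\mathcal{G}_1\|\mathcal{G}_2$, any self-avoiding path from $F$ to $N$ lies entirely in one of the subgraphs, so each ant reinforces exactly one $\mathcal{G}_i$. Let $N_i(n)$ count the first $n$ ants that reinforce $\mathcal{G}_i$. By the parallel law for effective conductances, the $(n{+}1)$-th ant chooses $\mathcal{G}_i$ with probability $C_i^{(n)}/(C_1^{(n)}+C_2^{(n)})$, where $C_i^{(n)}$ is the effective conductance of $\mathcal{G}_i$ between $N$ and $F$ with weights $\bs W(n)$. Conditionally on $\{N_i(n)=k\}$ and the ordering of types, the weights inside $\mathcal{G}_i$ are distributed as those after $k$ steps of the ant process on $\mathcal{G}_i$, so by induction $W_e(n)/N_i(n)\to\chi_e^{(i)}$ a.s. A simultaneous induction on the SP structure shows that the effective conductance of any SP graph at its limit weights equals $1/L_i$, the reciprocal of the geodesic length; by degree-1 homogeneity of effective conductance, this yields $C_i^{(n)}/N_i(n)\to 1/L_i$ a.s. Applying Rubin's continuous-time embedding to $(N_1,N_2)$, type-$i$ events occur in continuous time at rate $C_i^{(n)}\sim N_i/L_i$, so $(N_1(t),N_2(t))$ is asymptotically a two-type pure-birth (Yule) process of rates $(1/L_1,1/L_2)$, giving $N_i(t)e^{-t/L_i}\to\xi_i$ for strictly positive random $\xi_i$. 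When $L_1<L_2$ this forces $N_2(n)/n\to 0$ and $N_1(n)/n\to 1$ a.s., whereas when $L_1=L_2$ both $N_i(n)/n$ converge to positive random limits $\rho_1,\rho_2$ with $\rho_1+\rho_2=1$. Multiplying the two limits yields $W_e(n)/n\to\chi_e^{(i)}\rho_i$, which is nonzero exactly when $e$ lies on a geodesic of $\mathcal{G}_i$ \emph{and} $L_i=\min(L_1,L_2)$, i.e.\ when $e$ lies on a geodesic of $\mathcal{G}$.

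\textbf{Main obstacle.} The delicate point is making the Rubin comparison in the parallel case rigorous: the rate $C_i^{(n)}$ is not a deterministic function of $N_i(n)$ alone but depends on all weights inside $\mathcal{G}_i$, so the Yule-process approximation requires quantitative fluctuation control on $C_i^{(n)}/N_i(n)\to 1/L_i$ transferred from the inductive hypothesis through the random time-change. In particular, when $L_1<L_2$ one must rule out that bad fluctuations freeze $N_2(n)$ at a finite value (to apply induction inside $\mathcal{G}_2$ one needs $N_2(n)\to\infty$), which is done via a lower bound on the continuous-time type-2 rate together with a Borel--Cantelli argument; the exponential separation $e^{t/L_1}\gg e^{t/L_2}$ then yields $N_2(n)/n\to 0$.
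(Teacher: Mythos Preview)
Your overall strategy matches the paper's: induction on the SP decomposition, effective conductances governing the parallel split, and Rubin's embedding to control $(N_1(n),N_2(n))$. Your identification of $\gamma^{(n)}$ with the UST path from $N$ to $F$ is a genuine simplification the paper does \emph{not} exploit: because the naive restriction of the walk-based process to a subgraph is not itself the ant process on that subgraph, the paper has to introduce a \emph{generalised} model (Section~\ref{sec:generalised}) in which the number of unsuccessful excursions is prescribed externally, and carry that through the induction. Your UST picture makes the series and parallel restrictions exact copies of the original process and removes that layer. Be aware, however, that your justification is wrong as written: the time-reversal of a walk from $N$ killed at $F$ is \emph{not} a walk from $F$ killed at $N$ (the path supports differ---one avoids $F$ before the end, the other avoids $N$). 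What is true, and what you need, is Lawler's theorem that for a reversible chain the forward and backward loop-erasures have the same law; together with Wilson's algorithm this yields $\gamma^{(n)}\stackrel{d}{=}\text{UST path}$.

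The substantive gap is the one you flag as the ``main obstacle'', and it is not merely a technicality. Rubin's construction requires the type-selection probability to be a deterministic function of $(N_1,N_2)$, but here it is $C_1^{(n)}/(C_1^{(n)}+C_2^{(n)})$ with $C_i^{(n)}$ depending on all internal weights of $\mathcal G_i$. Your inductive hypothesis only yields $C_i^{(n)}/N_i(n)\to 1/L_i$ with \emph{no rate}, and that is insufficient to close either sub-case: in the equal-height case the drift $p_n-N_1(n)/n$ is not a priori summable, and in the unequal-height case the Yule comparison cannot be made rigorous from mere convergence. The paper resolves this by running a \emph{strengthened} induction (Proposition~\ref{prop:good_lowerbound}) that carries a polynomial error bound $C_G(n)\ge (n-K_G n^{\alpha})/h_{\min}(G)$ with $\alpha=\alpha(G)<1$, together with the deterministic upper bound of Lemma~\ref{lem:lowerbound}{\bf (b)}; these two-sided rate bounds are exactly what feed the coupling with the auxiliary urn $(R_n^b)$ and then the stochastic-approximation argument of Section~\ref{sub:proof_th_sp}. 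To complete your route you would have to reprove Proposition~\ref{prop:good_lowerbound} (your UST reduction should make this cleaner, since the generalised model becomes unnecessary), and then plug that rate into your parallel step.
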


\begin{figure}
\begin{center}
\includegraphics[width=2cm]{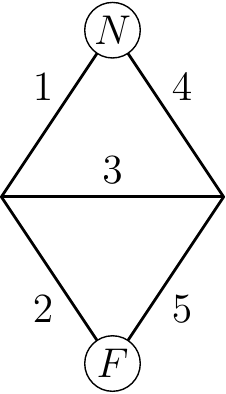}
\caption{The losange graph}
\label{fig:losange}
\end{center}
\end{figure}
Interestingly, the analysis of the loop-erased ant process outside 
the family of series-parallel graphs turns out to be very challenging.
To illustrate this, we consider {one of the simplest non-series-parallel graph one could think of, which is} the 5-edge losange of Figure~\ref{fig:losange}, which we call ``the losange graph'':
even on this simple graph, we are not able to prove convergence of the loop-erased ant process. 
However, we are able to prove convergence of the uniform-geodesic {ant process, which turns out to be simpler in this setting (see the remark before Lemma~\ref{lem:p135p234}).}
 
We number the edges of the losange graph from~1 to~5 as in Figure~\ref{fig:losange}. Our second main result is the following. 
\begin{theorem}\label{theo:losange}
For all $1\leq i\leq 5$ and $n\geq 0$, we denote by $W_i(n)$ ($\forall 1\leq i\leq 5$) the weight of edge number~$i$ after the $n$-th walker has reached the food in the {uniform-geodesic ant process} on the {losange graph}. (Recall that $W_i(0) = 1$, by definition.) 
Almost surely as $n\to+\infty$,
\[\frac{W_i(n)}{n} \to \chi_i,\quad 
\text{ for all }\ 1\le i\le 5,\]
where $(\chi_i)_{1\le i \le 5}$ is a random vector such that almost surely $\chi_1=\chi_2=1-\chi_4=1-\chi_5\in (0,1)$ and $\chi_3=0$.
\end{theorem}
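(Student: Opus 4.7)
The plan is to reduce the theorem to the single assertion that $W_3(n) = o(n)$ almost surely, and then to identify the remaining dynamics with a two-colour P\'olya urn. A short case analysis on the losange (\emph{Step~$1$}) shows that the reinforced geodesic uses exactly one of the edge sets $\{1,2\}$, $\{4,5\}$ (the two length-$2$ paths) or $\{1,3,5\}$, $\{2,3,4\}$ (the two length-$3$ paths): indeed edge~$3$ can belong to the shortest path of the trace only if the trace contains neither length-$2$ path in full, which by connectivity to $N$ and $F$ forces the trace to coincide with $\{1,3,5\}$ or $\{2,3,4\}$. Every length-$2$ reinforcement increments $(W_1,W_2)$ or $(W_4,W_5)$ symmetrically, and every length-$3$ reinforcement increments $W_3$ by~$1$. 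Consequently $|W_1(n)-W_2(n)|\leq W_3(n)-1$, $|W_4(n)-W_5(n)|\leq W_3(n)-1$, and $\sum_i W_i(n) = 5 + 2n + (W_3(n)-1)$, so $W_3(n)=o(n)$ almost surely immediately yields $\chi_3=0$, $\chi_1=\chi_2$, $\chi_4=\chi_5$, and $\chi_1+\chi_4=1$.

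The heart of the argument (\emph{Step~$2$}) is therefore to control the growth of $W_3(n)$. The electrical-network method, applied to the weighted losange in the regime $W_1+W_2 \asymp W_4+W_5\asymp n$, gives that the conditional probability of an edge-$3$ reinforcement at step $n+1$ is of order $W_3(n)/n$. This is not summable when $W_3$ is bounded, so edge~$3$ will in fact be reinforced infinitely often; however, the recursion $\mathbb E[W_3(n+1)-W_3(n)\mid \mathcal F_n] \asymp W_3(n)/n$ is of generalised P\'olya-urn type, and a stochastic-approximation analysis of a suitably renormalised process (informally, $W_3(n)/n^{\alpha}$ for some explicit $\alpha<1$, which is approximately a non-negative martingale) shows that $W_3(n)$ grows strictly sub-linearly, yielding $W_3(n)=o(n)$ almost surely.

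Once this sub-linear bound is established (\emph{Step~$3$}), the dynamics on the four ``geodesic'' edges is, up to a bounded-in-ratio perturbation due to the edge-$3$ events, a two-colour P\'olya urn on the pair $(W_1+W_2,\,W_4+W_5)$: each step adds $+2$ to one of the two sums, with probabilities depending via a fixed law on the ratio $W_1/W_4$ at~$N$. Rubin's embedding in continuous time realises this process as two independent families of sums of exponential random variables, whose normalised ratio converges almost surely to a $\mathrm{Beta}$-distributed limit supported in $(0,1)$. This gives $\chi_1=\chi_2\in(0,1)$ almost surely and finishes the proof.

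The main obstacle is the circular dependence between Steps~$2$ and~$3$: the estimate $\asymp W_3(n)/n$ on the conditional edge-$3$ probability holds only when $W_1+W_2$ and $W_4+W_5$ are both of order $n$, which is essentially the content of Step~$3$, while the urn identification of Step~$3$ requires the sub-linear control of $W_3$ from Step~$2$. The plan is to break this circularity by a bootstrap: an initial coupling with a generalised P\'olya urn stochastically dominated by the true dynamics shows that neither side of the ``upper vs.\ lower'' competition is starved, giving weak but sufficient lower bounds on $W_1+W_2$ and $W_4+W_5$ to feed into the stochastic-approximation analysis of $W_3$; this in turn provides the sharp bounds needed for the final urn identification. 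This simultaneous use of couplings with both generalised and standard P\'olya urns is precisely what the abstract calls ``intricate coupling between two types of P\'olya urns''.
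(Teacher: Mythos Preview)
Your Step~1 reduction is correct, and the idea in Step~2 is roughly right, but you overstate the circularity: the paper computes $p_{135}(w)+p_{234}(w)$ explicitly and proves the uniform bound $p_{135}(w)+p_{234}(w)\le w_3\cdot\frac{w_3^2+1/2}{w_3+1/2}$ for \emph{all} $w\in\mathcal E$, with no assumption that both sides are of order~$n$. This already gives $W_3(n)/n\to 0$ by comparison with a Friedman-like urn, and a sharper uniform bound $p_{135}+p_{234}\le(1-\rho)w_3$ for small $w_3$ then yields the polynomial rate $W_3(n)=o(n^\alpha)$. The paper needs this polynomial rate (not just $o(n)$) to prove convergence of $\hat W_1(n)$ via the stochastic-approximation identity and the estimate $|F_2(w)|\le w_3/2$; your proposal does not explain how mere sublinearity of $W_3$ would make the drift term summable.

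The genuine gap is Step~3. You cannot conclude non-degeneracy by identifying the residual dynamics with a P\'olya urn and invoking Rubin's embedding: Rubin's construction requires the transition probabilities to be \emph{exactly} of urn form, so that the two clocks are independent, and a ``bounded-in-ratio perturbation'' destroys this. A perturbed P\'olya urn can perfectly well converge to $0$ or $1$; nothing in your outline rules this out. The paper's argument for $\chi_1\notin\{0,1\}$ is entirely different and is the hardest part of the proof: one argues by contradiction, assuming $W_1(n)/n\to 1$, and shows first that then $W_5(n)=O(n^\alpha)$ as well; an iterated bootstrap (pushing $\alpha\mapsto\max(2\alpha-1,\alpha/2)$ repeatedly) forces $W_3(n)=O(n^\varepsilon)$ for every $\varepsilon>0$; but a separate lower bound $p_{135}+p_{234}\ge \tfrac15 w_3/(1-\tfrac45 w_3)$, valid when $w_3,w_4,w_5$ are small, shows by another urn comparison that $W_3(n)\gtrsim n^{1/5}$ on this same event, a contradiction. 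Your bootstrap proposal (``neither side is starved'') does not capture this structure and would not, as stated, exclude the degenerate limits.
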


\subsection{Discussion}\label{sec:discussion}

{\bf Discussion on the loop-erased vs.\ uniform-geodesic reinforcement rules:}
While we believe the result on the losange graph is also true for the loop-erased ant process, we think the proof would be 
more involved than with the uniform-geodesic ant process.

The first of three steps in the proof in the uniform-geodesic case is to show that the normalised 
weight of the middle edge (edge number~3) converges to zero, 
and then use this convergence to zero to prove that the speed of convergence to zero is polynomial.
Although proving convergence of the normalised weight of edge~3 would be similar 
(and in fact almost identical) in the loop-erased case,
proving that the speed of convergence is polynomial is, we believe, 
much harder, and could in fact be wrong.
Intuitively, it should not be surprising that the weight of edge~3 could be bigger in the loop-erased than in the uniform-geodesic version of the model: this comes from the fact that reinforcing the edge~3 is more likely at every step in the loop-erased version of the model.
Since the proof in the uniform-geodesic case is already quite involved, 
we leave the case of the loop-erased ant process on the losange open. 

Conversely, the analysis of the uniform-geodesic ant process (and all its variants - see discussion below) 
on series-parallel graphs seems to be a challenging problem, which we also leave for further work. 
In summary, it seems that neither of the two versions of the process is easier to analyse than the other in general, but that this depends on the underlying (family of) graph(s).

\medskip
\begin{figure}
\begin{center}\includegraphics[width=9cm]{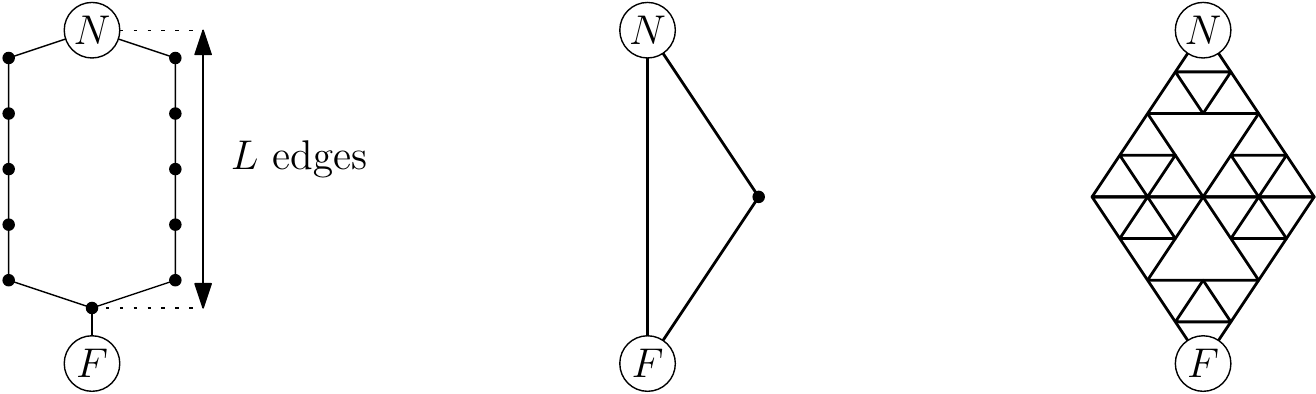}\end{center}
\caption{Graphs used in the discussions of Subsection~\ref{sec:discussion}.}
\label{fig:c_ex}
\end{figure}
{\bf Discussion on the (uniform-)geodesic version of the model:}
First note that on the losange graph, the trace of a walker can contain at most one geodesic, 
and thus the rule of choosing the subset of edges to reinforce uniformly among all geodesics in the trace is irrelevant in this case.
In fact, we believe that the way we choose which shortest path to reinforce when 
there are several in the trace can have a significant impact on the behaviour of the system.

Indeed, we first observe that, in the uniform-geodesic version of the model, 
there could exist an edge that belongs to a geodesic 
between $N$ and $F$ whose normalised weight converges to zero:
\begin{proposition}\label{prop:ex_Daniel}
If $\mathcal G$ is the graph on the left-hand side of Figure~\ref{fig:c_ex}, 
then the uniform-geodesic version of the model satisfies:
there exists~$e\in E$ such that $e$ lies on a geodesic between $N$ and $F$ (in fact, all edges lie on such a geodesic in this graph) 
and, for all $L$ large enough (see Figure~\ref{fig:c_ex} for the definition of~$L$),
\[\mathbb P({W_e(n)}/{n} \to 0)>0.\]
\end{proposition}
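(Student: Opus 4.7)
The plan is to exhibit a specific edge $e^\star$ lying on some geodesic from $N$ to $F$, together with a positive-probability event on which the normalised weight $W_{e^\star}(n)/n$ converges to zero. The underlying mechanism is that, under the uniform-geodesic rule, when the trace of a walker contains several geodesics the reinforcement of any single edge is diluted among them. For $L$ large, the graph on the left of Figure~\ref{fig:c_ex} admits $\Theta(L)$ geodesics from $N$ to $F$ that avoid $e^\star$ while only a bounded number of geodesics use $e^\star$, and this asymmetry should allow a single ``starvation'' event for $e^\star$ to persist.

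First I would isolate such an edge $e^\star$ from the explicit structure of $\mathcal G$ and consider the event $\mathcal A_1$ that the first walker $X^{\sss (1)}$ traces only edges on one of the avoiding geodesics. Since the weights at time~$0$ are all equal to~$1$, a direct computation with the transition rule yields $\mathbb P(\mathcal A_1) \geq c > 0$ uniformly, and in fact arbitrarily close to~$1$ when $L$ is large. Iterating, define $\mathcal A_K = \bigcap_{n=1}^K \mathcal A_n$, with $\mathcal A_n$ the analogous avoidance event for the $n$-th walker; using that on the previous events the weights on the avoiding geodesics grow linearly while the weights of edges incident to $e^\star$ remain $O(1)$, one keeps $\mathbb P(\mathcal A_K)$ bounded below for every fixed $K$ once $L$ is large enough.

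The last step is to turn this initial imbalance into an almost-sure statement: on a further sub-event of positive conditional probability, only finitely many walkers reinforce $e^\star$. The natural tool is a coupling with a generalised P\'olya urn restricted to the dominant avoiding geodesics, in the spirit of the arguments later used for Theorem~\ref{theo:losange}, combined with the electric-network upper bound on the probability that a walker ever reaches an endpoint of $e^\star$, expressed in terms of effective resistances that become small when the avoiding geodesics already carry weight of order~$n$. Summability of these per-walker probabilities and Borel--Cantelli then give $W_{e^\star}(n)/n \to 0$ almost surely on the favourable event.

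The main obstacle is this last step: even when the weights near $e^\star$ are negligible, a walker can still cross $e^\star$ during its exploration, and under the uniform-geodesic rule one must argue that the geodesic $\gamma^{\sss (n)}$ actually selected in $\mathcal G^{\sss (n)}$ seldom contains $e^\star$. This is precisely where the ``uniform'' choice matters, and where the electric-network comparison has to be performed with care since several competing geodesics can appear simultaneously in the trace; once this probability is quantified, the rest reduces to a routine stochastic-approximation estimate.
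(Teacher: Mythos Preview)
Your plan has a genuine gap at two levels.

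First, the structural picture is off. The graph on the left of Figure~\ref{fig:c_ex} consists of two self-avoiding paths of length~$L$ from $N$ to the vertex $P$ adjacent to $F$, together with the single edge $\{P,F\}$. There are exactly \emph{two} geodesics from $N$ to $F$, not $\Theta(L)$, so the asymmetry you invoke (``$\Theta(L)$ geodesics avoid $e^\star$ while only a bounded number use it'') does not exist here. The actual mechanism is more delicate: when a walker's trace contains both the left and the right path, the uniform-geodesic rule reinforces the left one only with probability~$\tfrac12$, and it is this dilution --- not a multiplicity of avoiding geodesics --- that makes the left side a repeller.

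Second, and more seriously, the Borel--Cantelli step cannot succeed as stated. Write $Z_n=N_1(n)/(n+2)$ for the normalised weight of the left path. The paper's computation (Section~\ref{sec:ex_Daniel}) shows that the probability the $(n{+}1)$-th walker reinforces the left path is $p(Z_n)=Z_n\bigl(1-\tfrac{\log L}{2L}(1+o(1))\bigr)+\mathcal O(Z_n^2)$ as $Z_n\to 0$. Thus even on the event $\{Z_n\to 0\}$ the per-step reinforcement probability behaves like $(1-\epsilon)Z_n\sim (1-\epsilon)N_1(n)/n$, which is \emph{not} summable; in fact $N_1(n)\to\infty$ almost surely. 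So one cannot hope to show that only finitely many walkers reinforce $e^\star$, and the ``summability plus Borel--Cantelli'' argument breaks down.

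What the paper does instead is to recognise that $Z_n$ satisfies a one-dimensional stochastic approximation $Z_{n+1}=Z_n+\tfrac1n(F(Z_n)+\Delta M_{n+1})$ with $F(x)=p(x)-x$, and then to compute $p(x)$ explicitly to first order in~$x$ by an electric-network decomposition of the walker's trajectory. The outcome is $F(x)=-x\cdot\tfrac{\log L}{2L}(1+o_{L\to\infty}(1))+\mathcal O(x^2)$, so that $F<0$ on a right-neighbourhood of~$0$ for all large~$L$, and the result of Hu--Lane--Sudderth (see \cite[Theorem~2.8]{Pemantle}) yields $\mathbb P(Z_n\to 0)>0$ directly. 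The role of large~$L$ is precisely to make $F'(0)<0$, not to produce many geodesics.
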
 
This proposition also holds (with an almost identical proof) when the choice of the geodesic is not uniform as long as any geodesic within the trace is chosen with a probability bounded away from 0.

Another rule for the choice of $\gamma^{\sss (n)}$ when there are several {shortest paths} in $\mathcal G^{\sss (n)}$ is the following: 
Consider $\mathcal G^{\sss (n)}_0$ the subgraph of $\mathcal G^{\sss (n)}$ obtained by removing all  
the edges and vertices that do not belong to any of the shortest paths from $N$ to $F$ in $\mathcal G^{\sss (n)}$. 
As in the loop-erased version of the model, imagine that the walker walks back from $F$ to $N$, 
by only crossing edges from $\mathcal G^{\sss (n)}_0$, 
and, when faced with a choice, choosing the edge it crossed the earliest on the way forward.
Define $\gamma^{\sss (n)}$ as the set of edges crossed by the walker on its way back to the nest.
{We believe that the same conjecture as for the loop-erased version of the model should be true for this version of geodesic ant process.}

\medskip
{\bf Other possible reinforcement rules:}
An alternative reinforcement rule could be to reinforce all edges that the $n$-th walker crossed, i.e.\ all edges in~$\mathcal G^{\sss (n)}$, instead of only reinforce the edges of $\gamma^{\sss (n)}$. 
Intuitively, this would mean that ants lay pheromones on their way to the food instead of laying them on their way back to the nest. 
A mean-field version of this alternative model is also considered in the biology literature (see, e.g.,~\cite{current_ants}). Preliminary work on this alternative reinforcement rule suggests that it could lead to surprisingly different results and that the ants may not always find the shortest path, 
we leave this for further work. 

In this alternative reinforcement rule where ants lay pheromones on their way to the food, one could consider that ants cannot sense from the pheromones laid on an edge how many different ants have crossed this edge, but rather how many times this edge has been crossed by an ant. This would mean that if the~$n$-th ant crossed an edge~$k$ times the weight of this edge is increased by~$k$ when updating the weights after the~$n$-th ant has reached the food. Finally, one could wonder how the results are impacted if the ants are sensitive to their own pheromones, i.e.\ if the weights are updated during the random walks after every steps of the ants, and not after each ant reaches the food. Each ant would then perform a \mbox{(self-)}reinforced random walk that starts on an already-weighted graph. We believe that these variants could lead to different asymptotic behaviours and raise various interesting mathematical challenges.

\medskip
{\bf Discussion on linear vs.\ sub- or super-linear reinforcement:}
As mentioned in the introduction, Conjecture~\ref{conj} would no longer be true if we considered super- or sub-linear reinforcement instead of linear reinforcement.
Indeed, consider the graph in the middle of Figure~\ref{fig:c_ex}, and imagine that all the ants perform weighted random walks on the graph $\mathcal G$, but according to the weights $W_e(n)^\alpha$ ($\forall e\in E$), for some $\alpha>0$.
One can check that if $\alpha>1$ (i.e.\ in the super-linear case), 
then, almost surely, the subset of edges from $E$ such that $\liminf_n W_e(n)/n \neq 0$ is either $\{N,F\}$ or $E\setminus \{\{N,F\}\}$, each with positive probability. Also, if $\alpha<1$ (i.e.\ in the sub-linear case), the subset of all edges from $E$ such that $\liminf_n W_e(n)/n \neq 0$ is almost surely equal to $E$ itself.

\medskip
{\bf Discussion on the underlying graph:}
Theorems~\ref{theo.SP} and~\ref{theo:losange} confirm Conjecture~\ref{conj} in the cases when $\mathcal G$ is a series-parallel graph or when $\mathcal G$ is the losange {graph}, which is the simplest non-series parallel graph. In the proof for series-parallel graph the iterative nature of this family of graph allows us to reason by induction. An iterative family of graphs that builds on the losange example is the ``double Sierpi\'nski gasket'' graph, which consists of two Sierpi\'nski gaskets of the same fractal depth whose bases have been merged (see the right-hand side of Figure~\ref{fig:c_ex} where a double Sierpi\'nski gasket graph of depth 3 is represented). Interestingly, a version of this graph has been considered in the biology literature under the name ``tower of Hanoi'' (see~\cite{current_ants, gasket_ants}).

\medskip
{\bf Other models of path and network formation by reinforcement:}
Our model can be seen as a reinforcement path formation model. The idea is that we start from 
a weighted graph $\mathcal G$ where all edges have the same weight~1, 
and we look at the graph $\mathcal G^{(\infty)}$ of all edges whose {normalised} weight does not tend to zero when time goes to infnity. 
In the langage of Conjecture~\ref{conj}, $\mathcal G^{\infty} = (V, E^{\infty})$ where $e\in E^{\infty}$ if and only if $e\in E$ and $\chi_e>0$.
The fact that $\mathcal G^{\infty}\neq \mathcal G$ means that some path or some network has been selected by the dynamics: in our case, we conjecture (and prove for series parallel graphs or the losange graph) 
that the dynamics selects the shortest paths between the nest and the food.

Other related models of {path} formation by reinforcement exist in the literature: 
for example, Le Goff and Raimond~\cite{LGR} look at a model of non-backtracking vertex-reinforced random walk, 
also inspired from ant behaviour. They show that, in this model, with positive probability, the ant eventually walks along a cycle of finite edges.
This model is very different from ours: the reinforcement is super-linear instead of linear, 
there is one ant as opposed to several ants walking successively in the graph,
the underlying graph is infinite (although locally finite), and there is no nest or food and thus no geodesics involved.

Another related model of network formation is the {\sc warm} model of {\dan v}an der Hofstad, Holmes, Kuznetsov and Ruszel~\cite{WARM}, 
where, at every time step, an edge is chosen at random and its weight increased by one (see also \cite{HKlep}). 
The choice of the edge to reinforce at each step is done according to a two-step procedure that involves super-linear reinforcement.
Van der Hofstad et al.\ {prove that the limiting graph (i.e.\ the graph consisting of all edges whose {normalised} weight does not go to zero) is a linearly-stable equilibrium with positive probability. They conjecture that, if the reinforcement is strong enough, all linearly-stable configuration is a union of trees of diameter at most~3. 
They prove this conjecture in the simple case of a triangle graph, i.e.~the complete graph on three vertices.}

A model of network formation with linear reinforcement is the ``signaling game'' of~\cite{HST11, KT16}, 
where at every time step, ``Nature'' decides which pairs of neighbours are allowed to communicate for this round, and
each vertex chooses a neighbour with probability proportional 
to the number of times they have communicated in the past, 
and {they communicates if they both choose each other and} if Nature allows it.
{In~\cite{KT16}, the authors} show that the limiting graph (consisting of edges between two vertices that 
communicate asymptotically a positive proportion of rounds) is star-shaped {with positive probability}.

\bigskip
{\bf Plan of the paper: } Section~\ref{sec:sp} contains the proof of Theorem~\ref{theo.SP} (i.e.\ the series-parallel case), and Section~\ref{sec:losange} the proof of Theorem~\ref{theo:losange} (i.e.\ the losange case). These two sections can be read independently. 
{Finally we prove Proposition~\ref{prop:ex_Daniel} in Section~\ref{sec:ex_Daniel}. }


\section{The loop-erased ant process on series-parallel graphs}\label{sec:sp}
In this section, we only consider the loop-erased {ant process}.
We define the size of a graph as its number of edges. For a series-parallel graph $G$, we define its height, which we denote by $h_{\min}(G)$, 
as the length of a shortest path from the source to the sink.

\subsection{Preliminary lemmas}
\noindent We start with two simple observations. The first one is a direct consequence of the definition of series-parallel graphs:
\begin{lemma}\label{decomp.graph}
Let $G$ be a nonempty series-parallel graph. Then, either $G$ is reduced to a single edge (it has size one), or one can find two non-empty series-parallel subgraphs $G_1$ and $G_2$, 
such that $G$ is obtained by merging $G_1$ and $G_2$, either in series or in parallel.    
\end{lemma}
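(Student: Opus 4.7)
The plan is to prove the statement by structural induction following the recursive definition of series-parallel graphs laid out just before the lemma. In fact, the lemma is essentially a restatement of that definition: any SP graph is built from single edges by finitely many series and parallel compositions, and the conclusion only asks us to read off the last such step (or observe that there are none, in which case $G$ is a single edge).

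More precisely, I would argue by induction on the number of composition steps needed to construct $G$ according to the recursive definition. For the base case, if $G$ is obtained in zero composition steps, then by definition $G$ is the single-edge graph, and the first alternative of the lemma holds. For the inductive step, suppose $G$ requires at least one composition step in its recursive construction; then the very last step in that construction must be either a series or a parallel merging of two strictly smaller SP graphs $G_1$ and $G_2$, each built in fewer steps. This directly yields the second alternative of the lemma.

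The one minor point to verify is that the two pieces $G_1$ and $G_2$ in any such final composition are non-empty. This follows by a one-line induction: the base case has one edge, and both series and parallel compositions add the edge counts of the two components, so every SP graph contains at least one edge and in particular $G_1,G_2$ are both non-empty. There is no real obstacle to this proof: the statement is nothing more than the observation that the last operation in some construction of $G$ as an SP graph is well-defined, and we simply expose it.
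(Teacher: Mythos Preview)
Your proof is correct and matches the paper's approach: the paper does not give a formal proof but simply states that the lemma is ``a direct consequence of the definition of series-parallel graphs,'' which is exactly the structural-induction argument you spell out. Your explicit verification that $G_1$ and $G_2$ are non-empty is a nice touch, but otherwise there is nothing to add.
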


\noindent The second observation is the following:  
\begin{lemma}\label{lem.convex}
Let $\varphi : (0,+\infty)^2\to (0,\infty)$ be the function defined by 
\[\varphi(x,y) = \frac 1{\frac 1x + \frac 1y} \quad\text{ for all }(x,y)\in (0,+\infty)^2.\]
Then, for all $(x, y), (x',y')\in (0,+\infty)^2$, one has
\begin{enumerate}[{\bf (a)}]
\item $\varphi(x +x',y+y') \ge \varphi(x,y) + \varphi(x',y')$, and
\item $\varphi(x+1,y+1) \le \varphi(x,y) +1$.
\end{enumerate}
\end{lemma}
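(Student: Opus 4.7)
Writing $\varphi(x,y) = xy/(x+y)$, both parts reduce to elementary inequalities and I would tackle them separately, giving one slick proof for each.

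For (a), I would use the identity $\varphi(x,y) = x - \frac{x^2}{x+y}$ (and symmetrically in $x'$, $y'$). Substituting into the claimed superadditivity and cancelling the linear term $x+x'$ on both sides, the inequality is equivalent to
\[\frac{x^2}{x+y} + \frac{(x')^2}{x'+y'} \;\ge\; \frac{(x+x')^2}{(x+y)+(x'+y')},\]
which is exactly Cauchy--Schwarz in Engel form (Titu's lemma). An alternative argument that I find more evocative in the spirit of the paper is via the electric-network interpretation: $\varphi(x+x',y+y')$ is the effective resistance of a diamond-shaped graph with two parallel paths having series resistances $(x,x')$ and $(y,y')$, whereas $\varphi(x,y)+\varphi(x',y')$ is the effective resistance of the same graph after identifying the two middle vertices of those two paths. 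Rayleigh's monotonicity principle asserts that contracting vertices can only decrease the effective resistance, which gives the inequality at once.

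For (b), I would just clear denominators. Since $(x+y)(x+y+2) > 0$, the inequality is equivalent to
\[(x+y)(x+1)(y+1) \;\le\; (x+y+2)\bigl(xy + x + y\bigr).\]
Setting $s = x+y$ and $p = xy$, the left-hand side is $sp + s^2 + s$ while the right-hand side is $sp + s^2 + 2p + 2s$, so the inequality collapses to $0 \le 2p + s$, which is obvious since $x,y>0$.

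\textbf{Main obstacle.} Neither part is really difficult; the only place where one has a genuine choice is in (a), between the two-line Cauchy--Schwarz computation and the more conceptual Rayleigh argument. I would likely present the Cauchy--Schwarz proof for brevity, possibly followed by a short remark pointing out the Rayleigh-contraction interpretation, since effective resistances and Rayleigh's principle play a central role elsewhere in the paper.
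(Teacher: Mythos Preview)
Your proofs of both parts are correct, but they follow different routes from the paper's.

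For \textbf{(a)}, the paper argues via concavity: it computes the Hessian of $\varphi$, checks that it is everywhere non-positive, and then uses the homogeneity relation $\varphi\bigl(\tfrac{x+x'}{2},\tfrac{y+y'}{2}\bigr)=\tfrac12\varphi(x+x',y+y')$ to upgrade midpoint concavity to superadditivity. Your reduction to Titu's lemma is shorter and avoids calculus altogether. The Rayleigh interpretation you sketch is essentially the one the paper records in a remark following its proof (the paper phrases it in terms of conductances rather than resistances, but the two formulations are dual); so your instinct to add that remark matches the authors' own.

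For \textbf{(b)}, the paper fixes $x$ and studies $H(y)=1+\varphi(x,y)-\varphi(x+1,y+1)$ as a function of $y$: it computes $H'(y)$, locates the maximum at $y=x$, and uses $H(0)>0$ together with $\lim_{y\to\infty}H(y)=0$ to conclude $H>0$. Your approach of clearing denominators and passing to the symmetric functions $s=x+y$, $p=xy$ collapses the inequality to $2p+s\ge 0$ in two lines, which is considerably more direct. Either argument is fine; yours is the more elementary one and has the advantage of giving the strict inequality immediately.
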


\begin{proof} Since 
\[\varphi\Big(\frac{x+x'}{2},\frac{y+y'}{2}\Big) 
= \frac 12 \cdot \varphi(x+x',y+y'),\]
proving that $\varphi$ is concave is enough to prove~{\bf (a)}. 
A simple calculation shows that 
\[\frac{\partial^2 \varphi }{\partial x^2}(x,y) = \frac{-2y^2}{(x+y)^3},\quad \frac{\partial^2 \varphi }{\partial y^2}(x,y) = \frac{-2x^2}{(x+y)^3},\quad \text{and}\quad \frac{\partial^2 \varphi }{\partial x \partial y}(x,y) = \frac{2xy}{(x+y)^3}.\]
This implies that the Hessian of~$\varphi$ is everywhere non-positive, 
and thus that~$\varphi$ is concave as claimed, which concludes the proof of {\bf (a)}. 

To prove {\bf (b)}, fix $x>0$ and set $H(y):= 1+\varphi(x,y) - \varphi(x+1,y+1)$ for all $y>0$.
Using the definition of $\varphi$, we can calculate 
{\[H'(y) =  \frac1{\big(\frac yx +1\big)^2}-\frac1{\big(\frac{y+1}{x+1} +1\big)^2},\]}
implying that the function $H$ is increasing on $[0,x)$ and decreasing on $(x,+\infty)$. 
Since $H(0)>0$, and $\lim_{y\to \infty} H(y) = 0$, 
it follows that $H(y)>0$ for all positive~$y$, which concludes the proof of {\bf (b)}. 
\end{proof}

\begin{remark} Item (a) has another simple proof in terms of conductances. Indeed one could note that by Rayleigh's monotonicity's principle, putting first $(x,x')$ in parallel and $(y,y')$ in parallel, and then 
putting the two of them in series has a better conductance than first putting $(x,y)$ in series and $(x',y')$ in series, and then putting them in parallel (to go to the first one, we need to add an edge with infinite conductance). 
\end{remark}
\subsection{Our main result in terms of effective conductances}
The main idea to prove Theorem~\ref{theo.SP} is to reason 
in terms of the ``effective conductance'' of the graph.
We interpret the weight of an edge as its ``conductance''
and let $\C_G(n)$ be the effective conductance 
(from the source to the sink) after the $n$-th walk has reached the sink, 
{and simply write $\C_G$ for the initial effective conductance}.
{In order to compute the effective conductance of a series parallel graph, one can use Lemma~\ref{decomp.graph} and the two following rules:}
\begin{itemize}
\item If $G$ is composed of two graphs $G_1$ and $G_2$ merged in parallel, then $\C_G = \C_{G_1} + \C_{G_2}$.
\item If $G$ is composed of two graphs $G_1$ and $G_2$ merged in series, then $\C_G = \varphi\big(\C_{G_1} , \C_{G_2}\big)$.
\end{itemize}
Our main result in terms of effective conductances reads as follows. 
\begin{theorem}\label{th:conductances}
If $G$ is a series-parallel graph, and $\C_G(n)$ is its conductance 
after the $n$-th walker has reached the sink, then, almost surely when $n\to+\infty$,
\[\frac{\C_G(n)}{n}  \to \frac1{h_{min}(G)},\]
where $h_{min}(G)$ is the graph distance between the source and the sink in~$G$.
\end{theorem}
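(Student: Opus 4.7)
The plan is to prove the theorem by strong induction on the number of edges of $G$. For the base case, $G$ is a single edge $e$ and every walker is forced to cross $e$, so $W_e(n)=n+1$ and $\C_G(n)=n+1$; since $h_{\min}(G)=1$, this gives $\C_G(n)/n\to 1=1/h_{\min}(G)$. For the inductive step, Lemma~\ref{decomp.graph} decomposes $G$ as either $G_1\circ G_2$ (series, with cut vertex $s$) or $G_1\|G_2$ (parallel, with shared endpoints $N,F$) for strictly smaller series-parallel subgraphs $G_1,G_2$. In each case the key structural claim is that the reinforcement dynamics induced on $G_i$ is distributed as that of an independent ant process on $G_i$ (from the source to the sink of $G_i$); granting this, the recursive conductance formulas combine with the induction hypothesis to yield the desired convergence.

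\textbf{Series case.} Every walker's loop-erased reversed trace is a self-avoiding path from $N$ to $F$, which since $s$ is a cut vertex splits canonically as the concatenation of a self-avoiding path from $N$ to $s$ in $G_1$ and one from $s$ to $F$ in $G_2$. Via Wilson's algorithm and the factorization of the uniform spanning tree at a cut vertex, the conditional joint law of these two pieces given the current weights is that of two \emph{independent} loop-erased random walks on $G_1$ and $G_2$ with those weights. Hence each $G_i$ receives exactly one walker's worth of reinforcement per walker of $G$, and the induction hypothesis gives $\C_{G_i}(n)/n\to 1/h_{\min}(G_i)$ a.s.\ for each $i$. The conclusion then follows from the continuity and degree-one homogeneity of $\varphi$:
\[
\frac{\C_G(n)}{n} = \varphi\!\left(\frac{\C_{G_1}(n)}{n},\,\frac{\C_{G_2}(n)}{n}\right) \longrightarrow \varphi\!\left(\frac{1}{h_{\min}(G_1)},\frac{1}{h_{\min}(G_2)}\right) = \frac{1}{h_{\min}(G_1)+h_{\min}(G_2)} = \frac{1}{h_{\min}(G)}.
\]

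\textbf{Parallel case.} Since $V(G_1)\cap V(G_2)=\{N,F\}$, every self-avoiding $N$-to-$F$ path in $G$ lies entirely in one of the $G_i$, and so does each loop-erased reversed trace. Decomposing each walker's trajectory into independent excursions from $N$ and using the standard conductance-escape formula $\mathbb{P}^{G_i}_N(\tau_F<\tau_N^+)=\C_{G_i}/\pi^{G_i}(N)$, one shows that, conditional on $\mathcal F_n$, the $(n{+}1)$-th trace lies in $G_i$ with probability $\C_{G_i}(n)/\C_G(n)$ and is distributed as a loop-erased random walk on $G_i$ alone with the current weights. Writing $n_i(n)$ for the number of walkers whose trace lies in $G_i$, the $G_i$-weights then evolve exactly as in a $G_i$-alone ant process with $n_i(n)$ walkers; by induction, $\C_{G_i}(n)/n_i(n)\to 1/h_{\min}(G_i)$ a.s.\ on $\{n_i(n)\to\infty\}$. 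When $h_{\min}(G_1)=h_{\min}(G_2)=h$, both $n_i(n)\to\infty$ a.s.\ (e.g.\ by P\'olya-urn domination), and since $n_1(n)+n_2(n)=n$ we obtain
\[
\frac{\C_G(n)}{n} = \sum_{i=1}^{2}\frac{n_i(n)}{n}\cdot\frac{\C_{G_i}(n)}{n_i(n)} \longrightarrow \frac{1}{h} = \frac{1}{h_{\min}(G)}.
\]

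\textbf{The main obstacle} is the unbalanced parallel case $h_{\min}(G_1)<h_{\min}(G_2)$, where the desired limit $1/h_{\min}(G_1)$ forces $n_2(n)/n\to 0$ almost surely. This is a stochastic-approximation problem: using the inductive rates, the drift of the process $n_2(n)/n$ converges to a deterministic map on $[0,1]$ whose only fixed points are $0$ (linearly stable) and $1$ (linearly unstable), the instability of $1$ being a direct consequence of $h_{\min}(G_1)<h_{\min}(G_2)$. Combining a Pemantle-type non-convergence-to-unstable-equilibria theorem with Rubin's continuous-time embedding (to decouple the randomness and control the martingale noise), together with the uniform a priori bound $\C_G(n{+}1)-\C_G(n)\le 1$ that follows from Lemma~\ref{lem.convex}(b) by induction, one concludes $n_2(n)/n\to 0$ almost surely, and hence $\C_G(n)/n\to 1/h_{\min}(G_1)=1/h_{\min}(G)$.
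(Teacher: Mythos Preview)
Your inductive skeleton---decompose $G$ via Lemma~\ref{decomp.graph} and push the induction through the series and parallel reductions---is exactly the paper's strategy, but your ``key structural claim'' is false, and the paper is explicit about this. You assert that the reinforcement dynamics induced on each $G_i$ ``is distributed as that of an independent ant process on $G_i$''. It is not. The reinforced path $\gamma^{(n)}$ is the loop-erasure of the \emph{time-reversed} trajectory, and this object genuinely depends on the walker's unsuccessful excursions from the source: if the walk visits a vertex $v$ during an early (unsuccessful) excursion and again later during the successful one, the backward loop-erasure will jump all the way back to the \emph{first} visit of $v$, picking up edges from the unsuccessful excursion. In the parallel case $G=G_1\|G_2$, a walker on $G$ conditioned to reinforce inside $G_1$ makes fewer unsuccessful excursions into $G_1$ than a fresh walker on $G_1$ alone (some of its excursions go into $G_2$), so the induced law on $G_1$ is \emph{not} that of the $G_1$-ant process. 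The paper states this outright at the start of Section~\ref{sec:generalised}: ``when reinforcing a path in $G_1$, an ant on $G$ tends to visit the source less often than an ant restricted to $G_1$''. The same obstruction appears in the series case for $G_2$: after first hitting the cut vertex $s$, the walker makes extra returns to $s$ coming from excursions back into $G_1$, which again shifts the law of the reinforced path in $G_2$.

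Relatedly, your appeal to Wilson's algorithm in the series case is misplaced: Wilson's algorithm describes the (forward) loop-erased random walk, which is the UST path, whereas the model here reinforces the loop-erasure of the reversed trajectory. These two self-avoiding paths are not equal for a given trajectory and do not have the same law in general, so the UST cut-vertex factorisation does not give you what you need.

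The paper's fix is to enlarge the model: in the \emph{generalised} loop-erased ant process (Section~\ref{sec:generalised}), the number $B_n$ of unsuccessful excursions of the $n$-th walker is allowed to be an arbitrary (externally specified) random variable. This enlarged class \emph{is} stable under restriction to $G_1$ and $G_2$ in both the series and parallel decompositions, which is precisely what makes the induction go through (Proposition~\ref{prop:good_lowerbound}). The quantitative lower bound $\C_G(n)\ge (n-K_G n^{\alpha})/h_{\min}(G)$ with $\alpha=h_{\min}(G)/(h_{\min}(G)+1)$, combined with the deterministic upper bound of Lemma~\ref{lem:lowerbound}(b), then yields Theorem~\ref{th:conductances}. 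Your sketch of the unbalanced parallel case (``Pemantle-type non-convergence \ldots\ together with Rubin's embedding'') is in the right spirit---the paper does use Rubin's construction there---but note that this step too is carried out inside the generalised model, not the original one.
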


\subsection{Deterministic bounds for the effective conductance of a series-parallel graph after $n$ walks}

The first step towards proving Theorems~\ref{theo.SP} and~\ref{th:conductances} is the following (deterministic) lemma.
\begin{lemma}\label{lem:lowerbound}
{\bf (a)}  Let $G$ be a series-parallel graph with weighted edges and let $\C_G$ be its effective conductance from the source to the sink. Consider a self-avoiding path from the source to the sink of length $L$, and denote by $\C'_G$ the effective conductance {of $G$} after the weights of all edges on this path have been increased by one.
Then,
\[\nicefrac1L\le \C'_G - \C_G \le 1.\]
{\bf (b)} Let $G$ be a series-parallel graph {and consider the loop-erased ant process on $G$}. There exists a constant $C>0$ depending only on~$G$, such that, {almost surely,} 
\[\C_G(n)\le  \frac{n+C}{h_{\min}(G)}, \quad \text{for all }n\ge 0.\]
\end{lemma}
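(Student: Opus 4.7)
My plan is to attack both parts by induction on the recursive series-parallel structure provided by Lemma~\ref{decomp.graph}. The key ingredients are Lemma~\ref{lem.convex} together with the obvious monotonicity of $\varphi$ in each variable (the partial derivatives are $(y/(x+y))^2$ and $(x/(x+y))^2$).

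For (a), the base case is a single edge: the increment is exactly $1$, so the bounds trivially hold. In a parallel merger $G=G_1\parallel G_2$, any self-avoiding $N$-to-$F$ path lies entirely in one $G_i$ (the source and sink being the only shared vertices), which reduces the claim to the induction hypothesis on that subgraph. In a series merger, the path splits into self-avoiding pieces of lengths $L_1,L_2$ with $L_1+L_2=L$, and the induction hypothesis gives conductance increments $\Delta_i:=\C'_{G_i}-\C_{G_i}\in[1/L_i,1]$. For the lower bound I would apply Lemma~\ref{lem.convex}(a):
\[\C_G'\ =\ \varphi(\C_{G_1}+\Delta_1,\C_{G_2}+\Delta_2)\ \ge\ \C_G+\varphi(\Delta_1,\Delta_2)\ \ge\ \C_G+\varphi(1/L_1,1/L_2)\ =\ \C_G+\tfrac1L;\]
for the upper bound, monotonicity of $\varphi$ and Lemma~\ref{lem.convex}(b) give
\[\C_G'\ \le\ \varphi(\C_{G_1}+1,\C_{G_2}+1)\ \le\ \varphi(\C_{G_1},\C_{G_2})+1\ =\ \C_G+1.\]

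For (b), the trick is to strengthen the statement to a purely deterministic one: for every SP graph $G$ there is a constant $C(G)$ such that, for \emph{any} sequence of self-avoiding $N$-to-$F$ paths reinforced one after another from unit weights, $\C_G(n)\le (n+C(G))/h_{\min}(G)$ for all $n$. Since each loop-erased reversed trace $\gamma^{(n)}$ is always a self-avoiding $N$-to-$F$ path, this implies (b). I induct again on the SP structure. The base case is $\C(n)=n+1$, $h_{\min}=1$, $C=1$. In the parallel case, each reinforced path lies entirely in one $G_i$, so if $n_i$ of the $n$ paths fall in $G_i$, the induction hypothesis together with $h_{\min}(G)=\min_i h_{\min}(G_i)$ gives
\[\C_G(n)\ =\ \C_{G_1}(n_1)+\C_{G_2}(n_2)\ \le\ \frac{n_1+C_1}{h_{\min}(G_1)}+\frac{n_2+C_2}{h_{\min}(G_2)}\ \le\ \frac{n+C_1+C_2}{h_{\min}(G)}.\]
In the series case both $G_i$ are reinforced exactly $n$ times; setting $C:=\max(C_1,C_2)$ and $h_i:=h_{\min}(G_i)$ so that $h_{\min}(G)=h_1+h_2$, monotonicity of $\varphi$ together with the scaling identity $\varphi(a/h_1,a/h_2)=a/(h_1+h_2)$ yields
\[\C_G(n)\ =\ \varphi(\C_{G_1}(n),\C_{G_2}(n))\ \le\ \varphi\!\left(\tfrac{n+C}{h_1},\tfrac{n+C}{h_2}\right)\ =\ \frac{n+C}{h_1+h_2}\ =\ \frac{n+C}{h_{\min}(G)},\]
closing the induction.

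The one nontrivial design choice is the inductive statement for (b): iterating part (a) directly only yields $\C_G(n)\le n+O(1)$, which loses the crucial factor $1/h_{\min}(G)$. Homogenising the two additive constants via $C:=\max(C_1,C_2)$ in the series step is exactly what allows $\varphi$ to factor out cleanly through the identity $\varphi(a/h_1,a/h_2)=a/(h_1+h_2)$; everything else is routine bookkeeping.
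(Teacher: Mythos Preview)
Your proof is correct and follows essentially the same approach as the paper: both parts are proved by induction on the series-parallel decomposition, using Lemma~\ref{lem.convex} in exactly the same places, with the same choices $C=C_1+C_2$ in the parallel step and $C=\max(C_1,C_2)$ in the series step of part~(b). The only cosmetic differences are that you make the deterministic nature of~(b) explicit and package the series computation via the identity $\varphi(a/h_1,a/h_2)=a/(h_1+h_2)$, whereas the paper writes out the same inequality chain directly.
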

\begin{proof} 
We first prove {\bf (a)} by induction on the size of the graph. 
If $G$ has size one, then the result is immediate {since $\C'_G = \C_G+1$}. 
Now assume that the result holds  
for all series-parallel graphs with size at most~$N$ (for some integer $N\geq 1$) 
and consider a graph~$G$ of size~$N+1$. 
By Lemma \ref{decomp.graph} we know that~$G$ is the merging 
of two non-empty subgraphs~$G_1$ and~$G_2$, either in parallel or in series.
Note that $G_1$ and $G_2$ both have size at most~$N$ 
and thus that the induction hypothesis applies to them.

If $G_1$ and $G_2$ are in parallel, then $\C_G  = \C_{G_1} + \C_{G_2}$. Now since the chosen path is self-avoiding, it either lies entirely in $G_1$ or in $G_2$. Assume for instance 
that it lies in $G_1$: 
using the induction hypothesis, 
we get that $1\ge \C'_{G_1} - \C_{G_1} \ge \nicefrac1L$, 
which concludes the proof since $\C'_G = \C'_{G_1} + \C_{G_2}$.

If $G_1$ and $G_2$ are in series, then first observe that one can write $L= L_1 + L_2$, with $L_i$ the length of the restriction of the path to $G_i$, for $i=1, 2$. 
Then,   
\[
\C_G'   = \frac{1}{\frac 1{\C'_{G_1}} + \frac 1{\C'_{G_2} } }  \ge \frac{1}{\frac 1{\C_{G_1} +\frac 1{ L_1}}  + \frac 1{\C_{G_2}+\frac 1{L_2} }}  \ge  \frac{1}{\frac 1{\C_{G_1}}  + \frac 1{\C_{G_2} }} + \frac{1}{L_1+L_2} = \C_G + \frac{1}{L},
\]
using the induction hypothesis for the first inequality and Lemma~\ref{lem.convex}{\bf (a)} for the second one. This concludes the proof of the lower bound of {\bf (a)}. The proof of the upper bound is entirely similar, using this time Lemma~\ref{lem.convex}{\bf (b)} instead of Lemma~\ref{lem.convex}{\bf (a)}.

Let us now prove {\bf (b)} by induction on the size of the graph again. 
If $G$ has only one edge (which connects the source and the sink), then
$\mathcal{C}_G(n)=1+n$,
which proves the result in this case.
Assume by induction that the upper bound holds for all graphs with at most $N$ edges, and assume that $G$ has $N+1$ edges. By Lemma \ref{decomp.graph}, $G$ consists of two nonempty graphs $G_1$ and $G_2$ which are merged either in parallel or in series, and such that both $G_1$ and $G_2$ have at most $N$ edges. By hypothesis, there exist two constants $C_1$ and $C_2$, such that  for all $n\ge0$,
\begin{equation}\label{induc.G1.G2}
\mathcal{C}_{G_1}(n)\le \frac{n+C_1}{h_{\min}(G_1)},\quad \text{and}\quad \mathcal{C}_{G_2}(n)\le \frac{n+C_2}{h_{\min}(G_2)}.
\end{equation}
If $G_1$ and $G_2$ are in parallel, then $\mathcal{C}_G(n)=\mathcal{C}_{G_1}(n_1)+\mathcal{C}_{G_2}(n-n_1)$, for some (random) integer $0\le n_1\le n$, and the result follows immediately from 
\eqref{induc.G1.G2}, with the constant $C:=C_1+C_2$, using that $h_{\min}(G)= \min(h_{\min}(G_1), h_{\min}(G_2))$. If $G_1$ and $G_2$ are in series, then noting that $h_{\min}(G)= h_{\min}(G_1) + h_{\min}(G_2)$, we get
\[
\mathcal{C}_G(n)=\frac{1}{\frac{1}{\mathcal{C}_{G_1}(n)}+\frac{1}{\mathcal{C}_{G_2}(n)}}\stackrel{\eqref{induc.G1.G2}}{\le}\frac{1}{\frac{h_{\min}(G_1)}{n+C_1}+\frac{h_{\min}(G_2)}{n+C_2} }
\le \frac{n+\max(C_1,C_2)}{h_{\min}(G_1)+h_{\min}(G_2)} = \frac{n+\max(C_1,C_2)}{h_{\min}(G)}. 
\]
This proves the induction step when $G_1$ and $G_2$ are in series, and concludes the proof of the lemma.
\end{proof}
\noindent A consequence of this lemma is that one has the deterministic bounds 
\begin{equation}\label{hmax.ineq}
\frac{n}{h_{\text{max}}(G)}\le \C_G(n)-\C_G(0) \le {\frac{n+C}{h_{\text{min}}(G)}},\quad \text{for all }n\ge 0,
\end{equation}
{for some constant $C>0$ and where $h_{\text{max}}(G)$ is} the length of the longest self-avoiding path from the source to the sink of $G$. In particular, almost surely $\C_G(n) \to \infty$, as $n\to \infty$. Note also that if the ants were always choosing the shortest path, then we would have 
\[\frac{n}{h_{\text{min}}(G)} \le \C_G(n) \le \frac{n+C}{h_{\text{min}}(G)},\]
for some constant $C>0$, for all $n\ge 0$. While the ants usually do not make this optimal choice, we will see that almost surely the asymptotic behavior of the effective conductance of the graph is still of this order (with a weaker control on the error term for the lower bound).  


\subsection{Bounds for a generalised version of the model}\label{sec:generalised}
In the following, for any series-parallel graph $G$, any (series-parallel) subgraph $H\subseteq G$, and any $n\geq 0$, we let ${\bf W}_{\!H}^G(n)$ 
denote the set of weights on the edges of $G$ after the $n$-th time a path in $H$ has been reinforced. 
We also simply write ${\bf W}_{\!G}(n)$, when $H=G$.

In order to implement an induction argument, we need to consider a generalisation of the loop-erased ant process. 
The reason for this is that we want the law of the process to be stable  under restriction to a subgraph. 
Unfortunately, the loop-erased ant process does not fulfil this: for instance if $G$ is the merging of two subgraphs $G_1$ and $G_2$ in parallel, then when reinforcing a path in $G_1$, an ant on $G$ tends to visit the source less often than an ant restricted to $G_1$. We now explain how we go around this problem.

In the original model on a graph $G$, when the $n$-th ant starts its random walk from $N$, 
it comes back to $N$ a random geometric number of times, say $B_n$, 
and then goes from $N$ to $F$ without returning to $N$. 
We say that the $n$-th ant did $B_n$ {\it unsuccessful excursions} in $G$ (i.e.~going from $N$ to $N$ without hitting $F$), and one {\it successful excursion} (i.e.~going from $N$ to $F$ without returning to $N$). 

In the original model, for all $n\ge1$, $B_n$ is measurable with respect to 
$\mathcal{F}_{n-1}(G):=\sigma({\bf W}_{\!G}(0),\dots,{\bf W}_{\!G}(n-1))$. In the generalised model, we allow $B_n$ and its law to be different and to depend on a larger sigma-field. More precisely, given $\mathcal{F}_{n-1}(G)$ and given some additional integer-valued random variable $B_n$, 
we condition the $n$-th ant on performing $B_n$ unsuccessful excursions before hitting $F$, and then reinforce a path in its range according to the same rule as for the loop-erased ant process, i.e.~we increase by one the weights of the edges along the loop-erasure of the backwards trajectory of the $n$-th ant. 
The only case of interest is when $B_n$ is measurable with respect to some sigma-field of the type $\sigma({\bf W}_{\!G}^{G'}\!(0),\dots,{\bf W}_{\!G}^{G'}\!(n-1))$, where~$G'$ is some series-parallel graph containing $G$; however the proofs of 
the next results work in full generality, without assuming anything on the random variables $B_n$.

For a series-parallel graph $G$, we still let $\mathcal C_G(n)$ denote the effective conductance of graph $G$ after~$n$ walkers have performed their walks and updated the weights in the generalised version of the loop-erased ant process described above.
We set
\begin{equation}\label{eq:def_alpha}
\alpha(G):=\frac{h_{\min}(G)}{h_{\min}(G)+1}.
\end{equation}
The following proposition, together with Lemma~\ref{lem:lowerbound}, implies Theorem~\ref{th:conductances}:

\begin{proposition}\label{prop:good_lowerbound}
Consider a generalised version of the loop-erased ant process on a series-parallel graph~$G$,   
and let $\alpha =\alpha(G)$. There exists a real random variable $K_G$, such that almost surely $K_G$ is finite, and for all $n\ge 1$, 
\begin{itemize}
\item[$(i)$] $\mathcal C_G(n) \ge \frac{n - K_G\cdot n^\alpha}{h_{\min}(G)}$;
\item[$(ii)$] after $n$ steps, the {conditional} probability that the {$(n+1)$-th walk reinforces a geodesic path} is larger than $1- K_G\cdot n^{\alpha-1}$.
\end{itemize}
\end{proposition}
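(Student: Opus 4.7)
\emph{Proof plan.} We plan to proceed by strong induction on the size of $G$, exploiting the series-parallel decomposition of Lemma~\ref{decomp.graph}. The base case, in which $G$ is a single edge, is trivial: $h_{\min}(G)=1$, $\alpha(G)=1/2$, the only edge is always reinforced (and is the unique geodesic), and $\C_G(n)=n+1$. For the inductive step, write $G$ as a series or parallel merging of two strictly smaller series-parallel graphs $G_1, G_2$. The generalised framework of Section~\ref{sec:generalised} is tailor-made for this induction: restricting the observation of the $G$-process to the excursions within $G_i$ yields a \emph{generalised} loop-erased ant process on $G_i$, whose sequence $B_n^{G_i}$ of unsuccessful-excursion counts is inherited from the $G$-walker. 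The induction hypothesis therefore provides an a.s.-finite random variable $K_{G_i}$ such that, after $n_i$ walks in $G_i$, one has $\C_{G_i}(n_i) \ge (n_i - K_{G_i} n_i^{\alpha_i})/h_{\min}(G_i)$ and, conditionally on the past, the probability that the next $G_i$-walk reinforces a $G_i$-geodesic is at least $1 - K_{G_i} n_i^{\alpha_i - 1}$, where $\alpha_i := \alpha(G_i)$.

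In the \textbf{series case}, the cut-vertex $M$ is traversed by the loop-erased reverse trajectory of each walker exactly once, so the reinforced $G$-path splits canonically into one reinforced path in $G_1$ (from $N$ to $M$) and one in $G_2$ (from $M$ to $F$). Hence $n_1=n_2=n$, and $\C_G(n)=\varphi(\C_{G_1}(n),\C_{G_2}(n))$. Plugging in the inductive lower bounds and using $h_{\min}(G)=h_{\min}(G_1)+h_{\min}(G_2)$, the identity $\varphi(a,b)^{-1}=a^{-1}+b^{-1}$, and the inequalities $\alpha_i\le \alpha(G)$, a short computation yields (i). For (ii), a $G$-path is a geodesic iff both of its $G_i$-restrictions are geodesics, and (ii) follows from a union bound on the two inductive conclusions.

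In the \textbf{parallel case}, an analysis of the loop-erased reverse trajectory shows that the reinforced path lies entirely in one of $G_1$ or $G_2$ (the $N$-excursions of the forward walk being confined to a single subgraph), giving $n_1(n)+n_2(n)=n$ and $\C_G(n)=\C_{G_1}(n_1)+\C_{G_2}(n_2)$. By the electric-network interpretation, the conditional probability that the $(n{+}1)$-th walker reinforces in $G_i$ equals $\C_{G_i}(n_i)/\C_G(n)$. Assume without loss of generality $h_1\le h_2$, so $h_{\min}(G)=h_1$. The balanced case $h_1=h_2$ is symmetric and direct; in the unbalanced case $h_1<h_2$, all $G$-geodesics lie in $G_1$, and the crux is to show $n_2(n)=O(n^{h_1/h_2})$ almost surely. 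Heuristically, the differential equation $dn_2/dn \sim n_2 h_1/(n h_2)$ predicts precisely this exponent; rigorously, I would establish it through a stochastic-approximation/supermartingale argument showing that $n_2(n)\,n^{-h_1/h_2}$ is a.s.\ bounded. Since $h_2\ge h_1+1$ implies $h_1/h_2\le \alpha(G)$, this polynomial control absorbs the deficit $n_2(1/h_1-1/h_2)$ together with the error terms $K_{G_i}n_i^{\alpha_i}$ into a single $O(n^{\alpha(G)})$ remainder, yielding (i). For (ii), the probability of failing to reinforce a $G$-geodesic is bounded by $n_2(n)/n + K_{G_1}n_1^{\alpha_1-1}$, both controlled by $n^{\alpha(G)-1}$ thanks to the inequality $h_1/h_2-1\le \alpha(G)-1$.

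The \textbf{main technical obstacle} is precisely this almost-sure control of $n_2(n)$ in the unbalanced parallel case. Its dynamics depend on the random conductances $\C_{G_i}(n_i)$, which we know only through the inductive bounds carrying their own a.s.-finite (but potentially large) random errors $K_{G_i}$. Extracting a single a.s.-finite $K_G$ from these nested fluctuations --- rather than, say, a control merely in probability --- requires careful bookkeeping of both the drift and the martingale-fluctuation contributions in the stochastic approximation, and this is where the bulk of the technical work must lie.
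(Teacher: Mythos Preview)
Your inductive plan, the treatment of the base case, the series case, and the balanced parallel case all match the paper's proof essentially verbatim. The difference lies in the unbalanced parallel case $h_1<h_2$, and it is worth spelling out.

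You correctly identify that the crux is an almost-sure polynomial bound on $n_2(n)$, and your target exponent $h_1/h_2$ is in fact \emph{sharper} than the paper's $\alpha(G)=h_1/(h_1{+}1)$ (the paper only uses $h_2\ge h_1{+}1$); either exponent suffices. Where you diverge is in the method. The paper does \emph{not} run a supermartingale/stochastic-approximation argument on $n_2(n)n^{-\alpha}$. Instead it introduces, for every real $b>0$, an auxiliary urn process $(R_n^b)_{n\ge0}$ whose transition probabilities are obtained by replacing $\mathcal C_{G_1}(n_1)$ by the \emph{deterministic} function $\varphi_b(i)=\max\{\mathcal C_{G_1}(0),(i-bi^{\alpha_1})/h_1\}$ and $\mathcal C_{G_2}(n_2)$ by its deterministic upper bound. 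Each $R_n^b$ is then analysed via Rubin's continuous-time embedding, yielding an a.s.-finite $C_b$ with $n-R_n^b\le C_b n^{\alpha}$. Finally, all the processes $(R_n^b)_{b>0}$ are coupled simultaneously with $N_1(n)$ using a common sequence of i.i.d.\ uniforms, so that on the event $\{K_{G_1}\le b\}$ one has $N_1(n)\ge R_n^b$ for every $n$. Since $K_{G_1}<\infty$ a.s., some $b$ works, giving $n_2(n)\le C_{K_{G_1}}n^{\alpha}$.

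The reason this family-coupling device is needed is exactly the obstacle you flag in your final paragraph: the inductive error $K_{G_1}$ is \emph{not} adapted to the filtration $(\mathcal F_n)$, so the drift bound one would feed into a supermartingale argument on $n_2(n)n^{-\alpha}$ is itself random in a non-adapted way. Your proposal acknowledges that ``careful bookkeeping'' is required here but does not supply the idea that resolves it. A direct supermartingale approach may be salvageable (e.g.\ by working with the actual, adapted conductance $\mathcal C_{G_1}(n_1)$ rather than its inductive lower bound), but that is a genuinely different argument and would need to be written out; as stated, your plan stops precisely at the point where the paper's key technical input (Rubin's embedding plus the $b$-indexed coupling) enters.
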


\begin{proof}
We reason by induction on the size of $G$: if $G$ has size~1, then $\C_G(n) = n+1$ almost surely, implying that the result holds. Let us now assume that the result holds for all series-parallel graphs of size at most $N$, and consider a graph $G$ of size $N+1$. 
By Lemma \ref{decomp.graph} we know that $G$ is the merging 
of two nonempty subgraphs $G_1$ and $G_2$, either in parallel or in series. We denote $N_1$ and $N_2$ the sources of $G_1$ and $G_2$, $F_1$ and $F_2$ their sinks.

{\bf Case 1: $G_1$ and $G_2$ are in series.} 
Assume without loss of generality that $G_1$ is on the top of $G_2$ (meaning that the sink $F_1$ of $G_1$ coincides with the source $N_2$ of $G_2$). 
{First note that each ant performing its walk in~$G$ will reinforce one path in $G_1$ and one path in~$G_2$. 
Moreover, by definition of the loop-erasure process, the path that is reinforced in $G_1$ is entirely determined by the trajectory of the walk 
up to its first hitting time of $F_1 = N_2$, 
while the path that is reinforced in $G_2$ 
is entirely determined by the trajectory of the ants after this hitting time of $N_2$. 
As a consequence, 
conditionally on the number of times the walker returns to $N$ before first hitting~$N_2$, 
the laws of the two paths that are reinforced in~$G_1$ and~$G_2$ are independent.}

Furthermore, for each~$n$, the number of unsuccessful excursions in $G_1$ (resp.\ $G_2$) 
that are made by the $n$-th walk before first hitting $N_2$ (resp.\ after first hitting $N_2$) 
is a measurable function of ${\bf W}_{\!G}(n-1)$ and the number $B_n$ of unsuccessful excursions that are prescribed in~$G$.
Therefore, the restrictions of the process to~$G_1$ and~$G_2$ 
are generalised versions of the loop-erased ant process, as defined before Proposition~\ref{prop:good_lowerbound}.
Therefore, we can use the induction hypothesis for $G_1$ and $G_2$:
there exist two random variables $K_1, K_2\in (0,\infty)$, such that with $\alpha_1=\alpha(G_1)$, and $\alpha_2=\alpha(G_2)$, 
\[\C_{G_1}(n) \geq \frac{n-K_1n^{\alpha_1}}{h_{\min}(G_1)}
\quad\text{ and }\quad
\C_{G_2}(n) \geq \frac{n-K_2n^{\alpha_2}}{h_{\min}(G_2)}.\]
If we denote by $\beta = \max(\alpha_1,\alpha_2)$, and by $K=\max(K_1,K_2)$, then
\begin{linenomath}
\begin{align*}
\C_G(n) 
&= \frac1{\frac1{\C_{G_1}(n)} + \frac1{\C_{G_2}(n)}}
\geq \frac1{\frac{h_{\min}(G_1)}{n-K_1n^{\alpha_1}}+\frac{h_{\min}(G_2)}{n-K_2n^{ \alpha_2}}}\\
&\ge \frac{n-Kn^\beta}{h_{\min}(G_1)+h_{\min}(G_2)}
= \frac{n-Kn^\beta}{h_{\min}(G)},
\end{align*}
\end{linenomath}
since $h_{\min}(G) = h_{\min}(G_1)+h_{\min}(G_2)$; 
which concludes the induction argument for Part $(i)$ because, by definition, $\beta \leq \alpha(G)$.

For Part~$(ii)$ we just observe that, by the induction hypothesis {and a union bound}, the {conditional} probability that the $n$-th walker {does not reinforce} a geodesic path is {smaller than $K_1n^{\alpha_1-1}+K_2n^{\alpha_2-1} 
\le Kn^{\beta-1}$},  
which concludes the induction argument in
the case when $G_1$ and $G_2$ are merged in series.

\medskip
{\bf Case 2: $G_1$ and $G_2$ are in parallel.}
We start again by showing that the restrictions of the process on~$G_1$ and~$G_2$ are  generalised versions of the loop-erased model as defined before Proposition~\ref{prop:good_lowerbound}.
For all integers $n$, we denote by $N_i(n)$ the number of times a path in $G_i$ have been reinforced after $n$ ants have performed their walks in~$G$: one has
\begin{equation}\label{CGpar}
\C_G(n)  = \C_{G_1}(N_1(n)) + \C_{G_2}(N_2(n)).
\end{equation} 
We also let $(\tau^{\sss (i)}_k)_{k\geq 1}$ be the random times when the process $N_i$ increases by one, i.e.\ the times when an ant reinforces a path in $G_i$.
For all $n\geq 1$, $k\geq 0$, $i\in\{1, 2\}$, given $\tau_{k-1}^{\sss (i)}$, 
the time to wait until another ant reinforces a path in $G_i$ (i.e.\ $\tau_{k}^{\sss (i)} - \tau_{k-1}^{\sss (i)}$) and the number $B_k^{\sss (i)}$ of unsuccessful excursions made by this ant (the $\tau_k^{\sss (i)}$-th ant) in $G_i$ are both measurable functions of ${\bf W}(\tau_{k-1}^{\sss (i)})$ 
and of the total number of unsuccessful excursions performed in~$G$ by all ants between times $\tau_{k-1}^{\sss (i)}+1$ and $\tau_k^{\sss (i)}$.
Moreover, by definition, given this information, the reinforced path in $G_i$ is chosen by performing $B_k^{\sss (i)}$  independent unsuccessful excursions, plus one additional independent successful excursion, and using the loop-erasure rule. Thus we can use the induction hypothesis for $G_1$ and $G_2$.

In the following, we use the fact that, at any time~$n$, the $(n+1)$-th walker performs its successful excursion in $G_i$ with probability 
$\mathcal C_{G_i}(n)/(\mathcal C_{G_1}(n) + \mathcal C_{G_2}(n))$, for $i=1,2$. 
Indeed, this follows from the fact 
the law of the successful excursion of each ant walking on $G$ is by definition independent of the number of unsuccessful excursions performed by this ant and of their trajectories.
Moreover, for the simple random walk in $G$ (that is if we were considering the original model), 
the probability to reinforce a path in $G_i$ is given by the ratio of the effective conductances, and this happens if and only if the successful excursion belongs to $G_i$.

\medskip
{\bf Case 2.1:} We first assume that $h_{\text{min}}(G_1)=h_{\text{min}}(G_2)$.
Using the induction hypothesis, 
{there exist two random variables $K_1,K_2\in(0,\infty)$ such that, almost surely,}  
\begin{linenomath}
\begin{align*}
\C_G(n) &\geq 
\frac{N_1(n)-K_1N_1(n)^{\alpha}}{h_{\min}(G_1)} 
+ \frac{N_2(n)-K_2N_2(n)^{\alpha}}{h_{\min}(G_2)}\\
&\geq \frac{N_1(n)+N_2(n)}{h_{\min}(G)} - \frac{K(N_1(n)^{\alpha}+N_2(n)^{\alpha})}{h_{\min}(G)}, 
\end{align*}
\end{linenomath}
with $\alpha = \alpha(G)$ (see Equation~\eqref{eq:def_alpha} for the definition of $\alpha(G)$) 
{and $K=K_1+K_2$}. 
This concludes the induction argument for  Part~$(i)$, since by concavity of the map $x\mapsto x^\alpha$, we have
\begin{equation}\label{CGn.concav}
N_1(n)^{\alpha} + N_2(n)^{\alpha}\leq 2^{1-\alpha} n^{\alpha}.
\end{equation}
Concerning Part~$(ii)$, note that using the induction hypothesis, if the $(n+1)$-th walker makes its successful excursion in $G_1$, then the probability that its range contains a geodesic path of $G_1$ is larger than $1-K_1N_1(n)^{\alpha-1}$, and similarly for $G_2$. 
Considering the complement and using a union bound, we deduce that the probability that the $(n+1)$-th walker reinforces a geodesic path of $G$ is at least
\begin{linenomath}\begin{align*}
1-K_1N_1(n)^{\alpha-1}-K_2N_2(n)^{\alpha-1} \ge 1- Kn^{\alpha - 1},
 \end{align*}\end{linenomath}
which concludes the proof of  the induction argument in the case when $h_{\text{min}}(G_1)=h_{\text{min}}(G_2)$.

\medskip
{\bf Case 2.2:} 
We now assume that $h_{\text{min}}(G_1)\neq h_{\text{min}}(G_2)$, and without loss of generality 
$h_{\text{min}}(G_1)<h_{\text{min}}(G_2)$, which implies $\alpha(G) = \alpha(G_1)$ (see Equation~\eqref{eq:def_alpha} for the definition of $\alpha(G)$).
Using the induction hypothesis, we have that there exists a random variable $K_1\in(0,\infty)$,   such that
\begin{equation}\label{CG1.1}
\mathcal{C}_{G_1}(N_1(n))\ge \frac{N_1(n)}{h_{\min}(G_1)} \cdot (1-K_1N_1(n)^{\alpha-1}).
\end{equation}
For small values of $N_1(n)$, this lower bound can be negative 
(recall that, by definition, $\alpha=\alpha(G) <1$; see Equation~\eqref{eq:def_alpha});
a better lower bound for small values of $N_1(n)$ is given by 
\begin{equation}\label{CG1.2}
\mathcal{C}_{G_1}(N_1(n))  \ge \mathcal C_{G_1}(0).
\end{equation}
By Lemma~\ref{lem:lowerbound}{\bf (b)}, there exists a constant $C_2>0$ (only depending on $G_2$), such that
\begin{equation}\label{CG2.1}
\mathcal{C}_{G_2}(n-N_1(n))\le \frac{n-N_1(n)+ C_2}{h_{\min}(G_2)} \le \frac{n-N_1(n)+C_2}{h_{\min}(G_1)+1},
\end{equation}
because, by assumption, $h_{\min}(G_2)\geq h_{\min}(G_1)+1$.
For all $b>0$, we define the function $\varphi_b$ such that, for all $i\ge0$,
\begin{equation}\label{eq:def_phi}
\varphi_b(i) := \max\left(\C_{G_1}(0), \frac{i-b i^{\alpha}}{h_{\min}(G_1)}\right).
\end{equation}
We also define the function $\psi$ such that, for all $i\ge0$,
\begin{equation}\label{eq:def_psi}
\psi(i):= \frac{\alpha(i+C_2)}{h_{\min}(G_1)}.
\end{equation}
By Equations~\eqref{CG1.1}, \eqref{CG1.2}, and \eqref{CG2.1} 
we get that the probability $p_n$ that the $(n+1)$-th reinforces a geodesic path of $G_1$, conditionally on ${\bf W}_{\!G}(n)$, satisfies
\begin{equation}\label{pn}
p_n = \frac{\mathcal{C}_{G_1}(N_1(n))}{\mathcal{C}_{G_1}(N_1(n))+{\mathcal{C}_{G_2}(n-N_1(n))}} 
 \ge  \frac{\varphi_{K_1}(N_1(n))}{\varphi_{K_1}(N_1(n)) + \psi(n-N_1(n))}, 
\end{equation}
for all $n\ge 0$.
We now prove that, almost surely, there exists a finite random variable $K>0$, such that 
\begin{equation}\label{eq:N1}
N_1(n)\ge n - Kn^\alpha, \quad \text{ for all }n\ge 1.
\end{equation} 
This is enough to concludes the proofs of the induction step for both Parts~$(i)$ and~$(ii)$. 
Indeed, on the one hand, we get that, for all $n\ge 1$,
\[
\mathcal{C}_{G}(n)\ge \mathcal{C}_{G_1}(N_1(n))
\ge  \frac{N_1(n)(1-K_1N_1(n)^{\alpha-1})}{h_{\min}(G_1)}
\ge \frac{n-(K+ K_1)n^\alpha}{h_{\min}(G)},
\]
which concludes the proof of the induction step of Part~$(i)$.
And, on the other and, using \eqref{hmax.ineq}, we get that the probability of not reinforcing a geodesic path in~$G$ is smaller than
\[K_1N_1(n)^{\alpha-1}+(1-p_n)\le (K_1+h_{\max}(G)K )n^{\alpha-1},\]
which concludes the proof of the induction step of Part~$(ii)$.
Therefore, to conclude the proof, it only remains to prove Equation~\eqref{eq:N1}.

If $K_1$ was a fixed constant, the conclusion would come by simply analysing the generalised urn process associated to $\varphi_{K_1}$ 
and $\psi$.
But here, $K_1$ is a random variable that depends on the whole history of the process. To go around this issue, we are going to define 
a family of generalised P\'olya urns, and couple all of them with the process $(N_1(n))_{n\geq 0}$, in such a way that almost surely 
$(N_1(n))_{n\geq 0}$ will dominate at least one of those urns. 
To be more precise, for all $b>0$, we define the Markov process $(R_n^b)_{n\ge 0}$,  by $R_0^b = 0$, and for all $n\ge 0$,
\begin{equation}\label{eq:def_Rb}
q_n^b:=\mathbb P(R_{n+1}^b = R_n^b+1 \mid R_n^b) = 1- \mathbb P(R_{n+1}^b = R_n^b \mid R_n^b) =  \frac{\varphi_b(R_n^b)}{\varphi_{b}(R_n^b) + \psi(n-R_n^b)},
\end{equation}
where $\varphi_b$ and $\psi$ are defined in Equations~\eqref{eq:def_phi} and~\eqref{eq:def_psi} respectively.
We now fix some $b>0$ 
and show that there exists an almost surely finite random variable $C_b$, such that 
\begin{equation}\label{eq:Rn_Rubin}
n-R_n^b \le C_b n^\alpha, \qquad \text{for all }n\ge 0. 
\end{equation}
To prove Equation~\eqref{eq:Rn_Rubin}, it is convenient to use 
Rubin's algorithm, which was introduced in 
Davis's paper on reinforced random walks \cite{Davis}. 
Consider $\{\xi_i^1\}_{i\ge 0}$ and $\{\xi_i^2\}_{i\ge 0}$ 
two independent sequences of independent mean-one exponential random variables, 
and define, for all $n\ge 1$, 
\[T_n^1 : = \sum_{k=0}^{n-1} \frac{\xi_k^1}{\varphi_b(k)},\quad \text{and}\quad T_n^2 : = \sum_{k=0}^{n-1} \frac{\xi_k^2}{\psi(k)}.\] 
Set also $T_0^1=T_0^2=0$ and, for all $t>0$ (see Figure~\ref{fig:rubin.1}),  
\[\tau^1(t):= \sup \{n\ge 0  : T_n^1\le t\},  \quad \text{and}\quad \tau^2(t):= \sup \{n \ge 0 : T_n^2\le t\}.\] 
\begin{figure}
\begin{center}
\includegraphics[width=12cm]{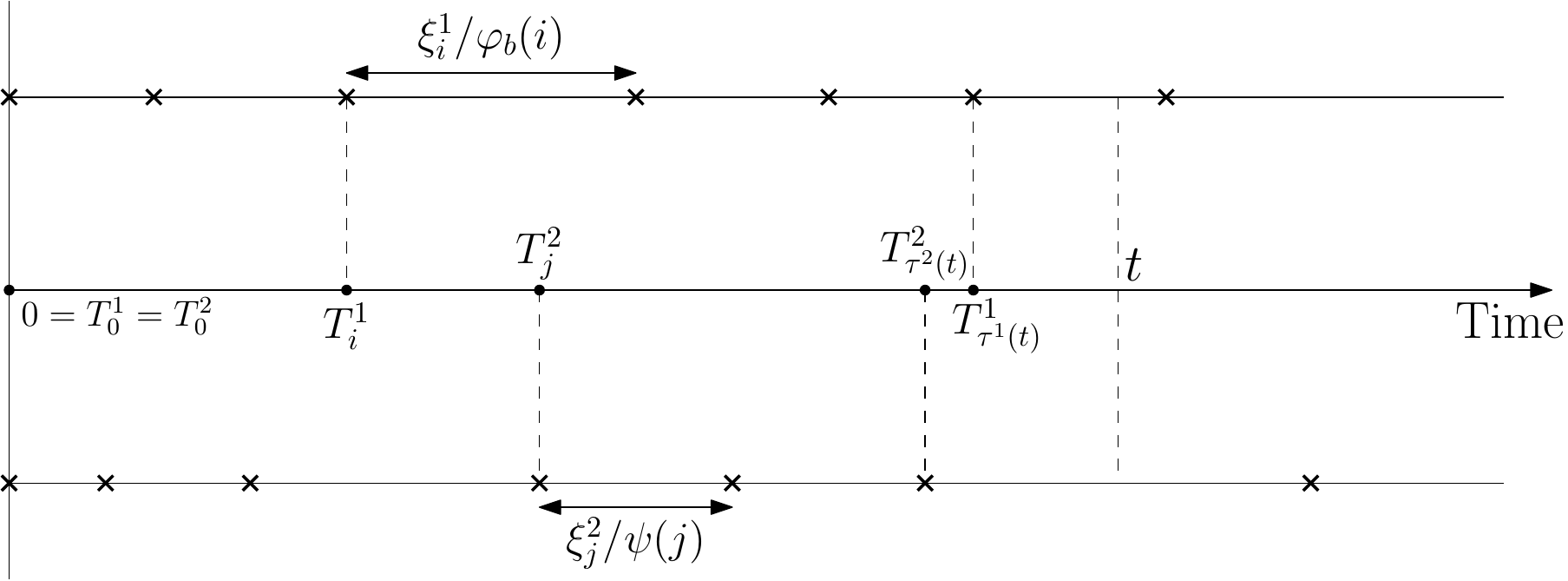}
\end{center}
\caption{Rubin's construction for the proof of Proposition~\ref{prop:good_lowerbound} (Case 2.2 in the proof). 
On the top line, the intervals between crosses are the $\xi^1_i/\varphi_b(i)$ and, similarly, on the bottom line,
the intervals between crosses are the $\xi^2_i/\psi(i)$. On the middle line, we show how the $T_i^1$'s and $T_i^2$'s are defined as the partial sums of these interval lengths and how $\tau^1_t$ and $\tau^2_t$ are defined for a given time~$t>0$.}
\label{fig:rubin.1}
\end{figure}
It follows from standard properties of independent exponential random variables that, for any $t>0$, 
conditionally on the fact that $\tau^1(t) = n_1$, and $\tau^2(t) = n_2$,
the probability $q_{n_1+n_2}^b$ that $R_{n_1+n_2}^b$ increases by one at the next step 
is also equal to the probability of $T_{n_1+1}^1$ being smaller than $T_{n_2+1}^2$.

As a consequence if we let $t_n=\inf\{t\ge 0 : \tau^1(t)+\tau^2(t)\ge n\}$, then the process 
$(\tau^1(t_n))_{n\ge 0}$ has the same law as $(R_n^b)_{n\ge 0}$. 
Note that, since they are bounded in $L^2$, the series  
\[\sum_{k=0}^\infty \frac{\xi_k^1- 1}{\varphi_b(k)} \quad \text{and}\quad \sum_{k=0}^\infty \frac{\xi_k^2- 1}{\psi(k)}\]
converge almost surely. In particular 
\begin{equation}\label{Ti.Rubin}
T_n^1 = \sum_{k=0}^{n-1} \frac 1{\varphi_b(k)} +\mathcal O(1) =\log n + \mathcal O(1) , \quad \text{and}\quad T_n^2 = \sum_{k=0}^{n-1} \frac 1{\psi(k)} +\mathcal O(1) = \frac 1\alpha \log n + \mathcal O(1),
\end{equation}
where the $\mathcal O(1)$ are almost surely bounded. 
Moreover, by definition, one has 
\[\sup_{n\ge 0} |T^1_{\tau^1(t_n)} - T^2_{\tau^2(t_n)} |\le \sup_{n\ge 0} \ \max\Big(\frac{\xi_n^1}{\varphi_b(n)},\frac{\xi_n^2}{\psi(n)}\Big),\]
from which it follows that 
\[T^1_{\tau^1(t_n)} = T^2_{\tau^2(t_n)} + \mathcal O(1),\] 
where $\mathcal O(1)$ stands for an almost surely finite random variable. 
Together with~\eqref{Ti.Rubin}, this entails $ \tau^2(t_n)\le  C_bn^\alpha$, for all $n\ge 0$, and some almost surely finite random variable $C_b$, which concludes the proof of Equation~\eqref{eq:Rn_Rubin}.

To conclude the proof of Equation \eqref{eq:N1}, we only need to
couple the family of processes $(R_n^b)_{n\geq 0}$, $b>0$, 
with $(N_1(n))_{n\geq 0}$ so that, almost surely, there exists $K>0$
such that $N_1(n)\ge R_n^K$ and $p_n\ge q_n^K$, for all $n\geq 0$. 
To do this coupling, we use a sequence $(U_n)_{n\ge1}$ of i.i.d.~uniform random variables on $[0,1]$, independent of everything else. 
We start the processes so that 
$N_1(0)=0$ and $R^b_0=0$ for all $b>0$. 
Then, at each time step $n\ge0$, set $N_1(n+1)=N_1(n)+1$ if and only if $p_n\ge U_{n+1}$ and, similarly for all $b>0$, $R^b_{n+1}=R^b_n+1$ if and only if $q^b_n\ge U_{n+1}$. 
By induction on $n$, we can prove that, in this coupling, 
for all $b\ge K_1$, for all $n\geq 1$, $N_1(n)\ge R^b_n$. 
Indeed, first note that, by Equation~\eqref{pn}, for all $b\ge K_1$, $N_1(n)\ge R_n^b$ implies $p_n\ge q_n^b$.
Moreover, if $N_1(n)\ge R_n^b$ and $p_n\ge q_n^b$, then $N_1(n+1) \ge R_{n+1}^b$, 
which concludes the proof by induction: we get that, 
for all $b\ge K_1$, $N_1(n)\ge R^b_n\ge n-C_b n^\alpha$. 
This concludes the proof of Equation \eqref{eq:N1}, thus the proof of the induction step in Case~2.2, 
and thus the proof of Proposition~\ref{prop:good_lowerbound} altogether.
\end{proof}

\subsection{Proof of Theorem~\ref{theo.SP}}\label{sub:proof_th_sp}
Since the original model is a particular case of the generalised model of Section~\ref{sec:generalised},
it is enough to prove that Theorem~\ref{theo.SP} holds in the generalised model.

By~Proposition~\ref{prop:good_lowerbound}, for any edge $e$ that is not contained in a geodesic path, one has $W_e(n)/ n\to 0$, when $n\to +\infty$. Thus it only remains to show that, for every edge $e$ that lies on a geodesic path, $W_e(n)/ n$ converges to some 
random variable $\chi_e$, which is almost surely non-zero.

The proof is done by induction on the size of $G$. If $G$ has size one, the result is straightforward. We now assume that the result holds for all series-parallel graphs of size at most $N$, and  consider a series-parallel graph $G$ of size $N+1$. 
Once again, by Lemma \ref{decomp.graph} we know that $G$ is the merging of two non-empty subgraphs~$G_1$ and~$G_2$. If $G_1$ and $G_2$ are in series, then the result for $G$ follows immediately from the induction hypothesis. 

Let us now assume that $G_1$ and $G_2$ are merged in parallel. If $h_{\text{min}}(G_1)\neq h_{\text{min}}(G_2)$, and for instance if $h_{\text{min}}(G_1)< h_{\text{min}}(G_2)$, then the proof in the previous subsection shows that a fraction $1-o(1)$ of the ants chooses a path in $G_1$, and then the result follows from the induction hypothesis.

If $h_{\text{min}}(G_1)= h_{\text{min}}(G_2)$, 
we first show that $\liminf N_i(n)/n>0$, almost surely for all $i\in \{1,2\}$.  
To do this, we use again Rubin's construction; 
the argument is very similar to the one given in Case 2.2 of the proof of Proposition~\ref{prop:good_lowerbound}. 
We only briefly indicate how to adapt the proof to show that $\liminf N_i(n)/n>0$ in the present case.
{\dan We aim at coupling the process $(N_1(n))_{n\geq 0}$ with a family of processes $(R^b_n)_{n\geq 0}$, $b>0$.}
We define $\varphi_b$ as in Equation~\eqref{eq:def_phi}
and set $\psi (i) = (i+C_2)/h_{\text{min}}(G_1)$ for all integers $i$ (compare with Equation~\eqref{eq:def_psi}). We then define $R_n^b$ as in Equation~\eqref{eq:def_Rb}.
One can show that, on the one hand, for any $b>0$, there exists a {\dan random variable} $c_b>0$, such that {\dan almost surely} for all $n\geq 1$, $R_n^b  \ge c_b n$. 
And, on the other hand, {\dan there exists a random $b>0$ such that $N_1(n) \ge R_n^b$ for all $n\ge0$ almost surely. Hence,} we deduce that 
almost surely $\liminf N_1(n)/n>0$, as claimed. 
In other words, almost surely, a positive fraction of the ants chooses a path in $G_1$, and by symmetry the same holds for $G_2$.

We now show that $N_1(n)/n$ converges almost surely when $n$ tends to infinity. 
To do this, we show that $X(n):=N_1(n)/n$ {\cec is} a stochastic approximation. 
Indeed, we have, for all $n\ge 1$, 
\[X(n+1) = X(n) + \frac{\Delta M_n + h_n}{n+1},\]
where $\Delta M_n = N_1(n+1)- N_1(n) - p_n$, 
with $p_n$ as defined in~\eqref{pn}, and $h_n: = p_n - X(n)$.
Iterating the above equation, we get that, for all $n\ge 1$, 
\[X(n+1) = X(1) + \sum_{k=1}^n \frac{\Delta M_k + h_k}{k}.\] 
Note that, by definition, the martingale increment $\Delta M_k$ is bounded by~$1$ in absolute value, and thus the martingale $\sum_{k=1}^n \Delta M_k / k$, is bounded in $L^2$, and hence 
almost surely convergent. 
Using the definition of $p_n$ (see Equation~\eqref{pn}), together with {\dan Lemma~\ref{lem:lowerbound}{\bf (b)}, Proposition~\ref{prop:good_lowerbound}, and the fact that $\liminf N_i(n)/n>0$, for $i=1,2$, 
one can show that, almost surely when $n$ tends to infinity,  
$h_n= \mathcal O(n^{\alpha(G_1)-1})$, where we recall that, by definition (see Equation~\eqref{eq:def_alpha}), $\alpha(G_1)<1$}. 
This implies that the sum $\sum_{k=1}^n h_k/k$ is almost surely convergent, 
and thus that $X(n)$ converges almost surely, as claimed. 
Together with the induction hypothesis applied to $G_1$ and $G_2$, 
this allows us to conclude the induction step for that last case 
($G_1$ and $G_2$ merged in parallel and $h_{\min}(G_1) = h_{\min}(G_2)$).

Altogether, this concludes the proof of Theorem \ref{theo.SP}.


\section{The geodesic ant process on the losange graph}\label{sec:losange}
We prove here Theorem \ref{theo:losange} concerning the losange graph; in this section, we thus only consider the (uniform-)geodesic version of the model (as discussed in Section~\ref{sec:discussion}, the rule about how to choose the geodesic to reinforce when there are several in the trace of the walker is irrelevant here since the trace of a walker can only contain one geodesic). 
The proof relies primarily on the fact that the sequence of weights is the solution of a certain stochastic recursion formula, 
which we state in Lemma~\ref{lem:setE} below. 

Recall Figure~\ref{fig:losange} of the losange graph, and define for $n\ge 0$, 
\begin{equation} \label{def.W}
{\bf W}(n) := (W_1(n), W_2(n), W_3(n),W_4(n),W_5(n)),\quad \text{and}\quad \hat{{\bf W}}(n)=\frac{{\bf W}(n)}{n+2}, 
\end{equation}
where $W_i(n)$ denotes the weight of edge $i$ after $n$ walkers (or ants) have reached the food.
Then for $w=(w_1,\dots,w_5) \in [0,1]^5$, denote by $p_{12}(w)$ the probability that a walker reinforces edges $1$ and $2$, when the weights
of the five edges of the losange graph are respectively $w_1,\dots,w_5$. Define similarly $p_{135}(w)$, $p_{234}(w)$, and $p_{45}(w)$, and set 
\begin{equation}\label{def.F}
F(w) := p_{12}(w)(1,1,0,0,0)+p_{135}(w)(1,0,1,0,1)
+p_{45}(w)(0,0,0,1,1)+p_{234}(w)(0,1,1,1,0)- w. 
\end{equation}
Lemma~\ref{lem:setE} expresses the fact that the whole study of the process $({\bf W}(n))_{n\ge 0}$ takes place in the subset of $[0,1]^5$, defined as 
\begin{equation}\label{set.E}
\mathcal E:=\left\{(w_1,w_2,w_3,w_4,w_5)\in [0,1]^5 : 
\begin{array}{lll}
w_1+ w_4 = 1, & \text{and} & w_2+w_5=1\\
|w_1-w_2|\le w_3  & \text{and}& |w_5-w_4|\le w_3 \\
w_1+w_2\ge w_3 & \text{and} & w_4+w_5\ge w_3
\end{array}
\right\}.
\end{equation}
Let us briefly explain the restrictions above. Note that each walk can only reinforce one of the following sets of edges:
\begin{enumerate}
\item[$(i)$] edge 1 and edge 2;
\item[$(ii)$] edge 4 and edge 5;
\item[$(iii)$] edge 1, edge 3 and edge 5;
\item[$(iv)$] edge 4, edge 3 and edge 2.
\end{enumerate}
One can see above that, at each round, precisely one of edge 1 or edge 4 is reinforced and precisely one of edge 2 or edge 5 is reinforced. 
Hence, we have that $w_1+w_4=w_2+w_5=1$. Next, the only cases where edge 1 is reinforced but not edge 2, or edge 2 is reinforced but not edge 1
are in scenarios $(iii)$ and $(iv)$, in which cases edge 3 is reinforced. Therefore, $|w_1-w_2|\le w_3$, and by symmetry $|w_4-w_5|\le w_3$. 
Finally, again using $(iii)$ and $(iv)$, every time edge 3 is reinforced edge 1 or edge 2 is reinforced. Therefore $w_1+w_2\ge w_3$ and by symmetry $w_4+w_5\ge w_3$.

Using further the definition of the ant process, we obtain Lemma~\ref{lem:setE} below. We use now the shorthand notation $\mathbb E_n$ to denote 
the conditional expectation with respect to the sigma-field $\mathcal F_n$ (where $(\mathcal F_n)_{n\ge 0}$ is the natural filtration of the process).
\begin{lemma}\label{lem:setE}
For all $n\geq 0$, $\hat{{\bf W}}(n)\in \mathcal E$. Furthermore,  
\begin{equation}\label{eq:algo_sto}
\hat{{\bf W}}(n+1) = \hat {{\bf W}}(n) + \frac1{n+3}\big(F(\hat{{\bf W}}(n))+ \Delta \bs M(n+1)\big),
\end{equation}
where $\Delta \bs M(n+1) = Y(n+1) - \mathbb E_n[Y(n+1)]$, and 
 $Y(n+1) := {\bf W}(n+1) - {\bf W}(n)$.
\end{lemma}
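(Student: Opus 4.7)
The statement has two parts; both are essentially bookkeeping, with no serious analytical obstacle. The plan is to first give an explicit combinatorial parameterisation of $\bs W(n)$, then deduce the containment $\hat{\bs W}(n)\in\mathcal E$ by inspection, and finally compute the increment $\hat{\bs W}(n+1)-\hat{\bs W}(n)$ directly to identify the drift $F$ and the martingale increment $\Delta\bs M(n+1)$.

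For the first claim, I would let $a_n,b_n,c_n,d_n$ denote the numbers of walks, among the first $n$, that reinforced the edge-subsets $(i),(ii),(iii),(iv)$ listed just before the lemma. Since each walk reinforces exactly one of these four sets, $a_n+b_n+c_n+d_n=n$. Reading off which edges each scenario reinforces and using $W_i(0)=1$, one gets
\begin{align*}
W_1(n)&=1+a_n+c_n, & W_2(n)&=1+a_n+d_n, & W_3(n)&=1+c_n+d_n,\\
W_4(n)&=1+b_n+d_n, & W_5(n)&=1+b_n+c_n.
\end{align*}
The identities $W_1(n)+W_4(n)=W_2(n)+W_5(n)=n+2$ then give $\hat W_1+\hat W_4=\hat W_2+\hat W_5=1$. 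Next $|W_1(n)-W_2(n)|=|c_n-d_n|\le c_n+d_n=W_3(n)-1\le W_3(n)$, and dividing by $n+2$ yields $|\hat W_1(n)-\hat W_2(n)|\le \hat W_3(n)$; the inequality $|\hat W_4-\hat W_5|\le \hat W_3$ is analogous. Finally, $W_1(n)+W_2(n)=2+2a_n+c_n+d_n\ge 1+c_n+d_n=W_3(n)$ (and symmetrically for edges $4,5$), which after normalisation gives the remaining constraints of $\mathcal E$.

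For the second claim, the key observation is that $Y(n+1):=\bs W(n+1)-\bs W(n)$ takes exactly one of the four deterministic values $(1,1,0,0,0)$, $(0,0,0,1,1)$, $(1,0,1,0,1)$, $(0,1,1,1,0)$, and the transition probabilities of the $(n+1)$-st walk, together with the rule for choosing $\gamma^{\sss(n+1)}$, depend only on ratios of weights, hence are invariant under rescaling. Thus the conditional probability that $Y(n+1)$ takes the value $(1,1,0,0,0)$ is $p_{12}(\bs W(n))=p_{12}(\hat{\bs W}(n))$, and similarly for the three other outcomes. Consequently, by the very definition of $F$ in \eqref{def.F},
\begin{equation*}
\mathbb E_n[Y(n+1)]-\hat{\bs W}(n)=F(\hat{\bs W}(n)).
\end{equation*}

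It only remains to rewrite the increment of $\hat{\bs W}(n)$. A direct computation gives
\begin{equation*}
\hat{\bs W}(n+1)-\hat{\bs W}(n)=\frac{\bs W(n)+Y(n+1)}{n+3}-\frac{\bs W(n)}{n+2}=\frac{1}{n+3}\bigl(Y(n+1)-\hat{\bs W}(n)\bigr),
\end{equation*}
and adding and subtracting $\mathbb E_n[Y(n+1)]$ inside the parentheses yields $\hat{\bs W}(n+1)-\hat{\bs W}(n)=\frac{1}{n+3}\bigl(F(\hat{\bs W}(n))+\Delta\bs M(n+1)\bigr)$, as desired. If there is any subtlety at all, it is in checking that the four outcomes listed above really are the only possible values of $Y(n+1)$ under the uniform-geodesic rule, but this follows from the fact that the trace of any walk on the losange contains at most one geodesic from $N$ to $F$.
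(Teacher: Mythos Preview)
Your proposal is correct and follows the same approach the paper takes: the paper does not give a formal proof of this lemma, but rather explains in the paragraph preceding it why the four reinforcement scenarios $(i)$--$(iv)$ force the constraints defining $\mathcal E$, and then states that the recursion \eqref{eq:algo_sto} follows from the definition of the process. Your parameterisation via the counts $a_n,b_n,c_n,d_n$ is a slightly more explicit version of that same argument, and your derivation of the increment formula (including the scale-invariance of the $p_{\bullet}$'s and the algebraic identity $\hat{\bs W}(n+1)-\hat{\bs W}(n)=\frac{1}{n+3}(Y(n+1)-\hat{\bs W}(n))$) fills in the details the paper leaves implicit.
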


As mentioned in the introduction, this losange case can be seen as an intricate coupling between a biased urn (the ants that reinforce edge~3 versus all others, i.e.\ $W_3(n)$ vs.\ $n-W_3(n)$) and a standard P\'olya urn (the ants that reinforce edges~1 and~2 vs.\ the ants that reinforce edges~4 and~5).
In Subsection~\ref{subsec:W3} we treat the first urn by proving that $W_3(n)/n$ converges to $0$ almost surely, at a polynomial speed.
The ``P\'olya'' part is treated in two additional steps:
In Subsection~\ref{subsec:convW} we show 
that $\hat{{\bf W}}(n)$ converges almost surely to some limit in $[0,1]^5$,
and, in Subsection~\ref{subsec:nondegenerate}, 
we prove that the limit is non-degenerate, 
in the sense that it does not charge the extremal 
points~$(1, 1, 0, 0, 0)$ and $(0,0,0, 1, 1)$.
In terms of the ants, this means that the ants {\it find both geodesics} and not just one of them.
Interestingly, ruling out these extremal cases is the most delicate part of the proof.

\subsection{On the convergence of $W_3(n)/n$ to $0$}\label{subsec:W3} 
In this section, we prove here the following result. 
\begin{proposition}\label{prop:W3}
Almost surely, as $n\to +\infty$, one has $W_3(n)/n \to 0$. More precisely, there exists $\alpha \in (0,1)$, such that almost surely, 
\[\lim_{n\to \infty} \frac{W_3(n)}{n^\alpha} = 0.\]
 \end{proposition}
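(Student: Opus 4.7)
My plan is to quantify the negative drift of $\hat W_3(n) := W_3(n)/(n+2)$ coming from the stochastic recursion of Lemma~\ref{lem:setE}, and to upgrade it to the claimed polynomial rate via a supermartingale argument. First I would derive explicit rational expressions for $p_{135}(w)$ and $p_{234}(w)$ by solving the standard hitting-probability equations for the random walk on the losange: labelling the two internal vertices $A$ (incident to edges $1,2,3$) and $B$ (incident to edges $3,4,5$), the event $\{\mathrm{trace}=\{1,3,5\}\}$ amounts exactly to the walker never crossing edges $2$ or $4$, so a direct computation gives
\[
p_{135}(w) \;=\; \frac{w_1\,w_3\,w_5}{D_{135}(w)},\qquad p_{234}(w) \;=\; \frac{w_2\,w_3\,w_4}{D_{234}(w)},
\]
for explicit positive polynomials $D_{135}, D_{234}$ bounded on $\mathcal E$. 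This reveals the factorisation $p_{135}(w)+p_{234}(w) = w_3\,g(w)$ for a continuous function $g$ on $\mathcal E$. The crucial quantitative point is that the defining inequalities $|w_1-w_2|\le w_3$ and $|w_4-w_5|\le w_3$ of $\mathcal E$ force any $w$ with $w_3\le\varepsilon$ to be $O(\varepsilon)$-close to the slice $\mathcal S := \{(t,t,0,1-t,1-t): t\in[0,1]\}$, on which an explicit calculation gives $g(t,t,0,1-t,1-t) = \tfrac{3}{2(2+t-t^2)} \le \tfrac{3}{4}$; by continuity there therefore exist $\varepsilon_0>0$ and $\alpha\in(3/4,1)$ with $g(w)\le\alpha$ whenever $w\in\mathcal E$ satisfies $w_3\le\varepsilon_0$.

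The main obstacle is to show that almost surely $\hat W_3(n)\le\varepsilon_0$ for all $n$ large enough. Because $F_3(w) = w_3(g(w)-1)\le 0$, the recursion of Lemma~\ref{lem:setE} makes $\hat W_3(n)$ an approximate supermartingale that converges almost surely to some $\xi\ge 0$, and ruling out $\xi>0$ reduces to analysing the stochastic-approximation ODE $\dot w = F(w)$ on $\mathcal E$. Its equilibria are the points of $\mathcal S$ itself (a one-parameter family, neutrally stable along $\mathcal S$ and attracting in the transverse $w_3$-direction with eigenvalue $g(t,t,0,1-t,1-t)-1<0$) together with two isolated boundary equilibria $(1,0,1,0,1)$ and $(0,1,1,1,0)$ at which all mass concentrates on a single length-$3$ path; direct linearisation shows that these two isolated points have a positive eigenvalue, so are \emph{unstable}. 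A Pemantle-type non-convergence-to-unstable-equilibria argument (cf.~\cite{Pemantle}), using the non-degeneracy of the covariance of the martingale increment $\Delta\bs M(n+1)$ in the unstable direction, then forces $\hat W(n)$ to avoid these two points almost surely. Consequently its limit set lies in $\mathcal S$, so $\hat W_3(n)\to 0$ almost surely, and in particular $\hat W_3(n)\le\varepsilon_0$ eventually.

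Once this holds, fixing any $\alpha'\in(\alpha,1)$, the quantitative bound $g(\hat W(n))\le\alpha$ plugged into Lemma~\ref{lem:setE} yields
\[
\mathbb E_n\!\bigl[\hat W_3(n+1)\bigr] \;\le\; \hat W_3(n)\Bigl(1-\tfrac{1-\alpha}{n+3}\Bigr) \qquad \text{for all large }n,
\]
so $V_n := (n+2)^{1-\alpha'}\hat W_3(n)$ decomposes (for $n$ large) as the sum of a nonnegative supermartingale whose expectation decays like $n^{\alpha-\alpha'}\to 0$ and a martingale correction that is $L^2$-bounded and hence a.s.\ convergent. Doob's theorem then gives an almost sure limit for $V_n$, and Fatou's lemma forces this limit to vanish; thus $W_3(n) = o(n^{\alpha'})$ almost surely, which is the announced polynomial rate. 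The algebraic derivation of $g$ and the closing supermartingale step are routine; the main difficulty lies in ruling out convergence to the two degenerate unstable equilibria, which is what makes the losange significantly harder than any individual series-parallel graph treated in Section~\ref{sec:sp}.
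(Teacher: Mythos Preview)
Your overall architecture --- (i) show $\hat W_3(n)\to 0$, then (ii) use a quantitative drift bound $p_{135}+p_{234}\le \alpha\,w_3$ for small $w_3$ to upgrade to a polynomial rate via a supermartingale --- matches the paper's. The supermartingale step (ii) is essentially the paper's Lemma~\ref{lem:W3poldec}. The differences, and the gaps, are all in step~(i) and in how you justify the bound $g\le\alpha$.

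\textbf{On the continuity of $g$.} Your claim that $g(w)=(p_{135}+p_{234})/w_3$ extends continuously to $\mathcal S$ is not quite right at the two corners $(0,0,0,1,1)$ and $(1,1,0,0,0)$. At $(0,0,0,1,1)$ one has $D_{135}(w)\sim 2(w_1+w_2+w_3)$ and $D_{234}(w)\sim w_1+w_2+w_3$, so $g(w)\sim \frac{w_1/2+w_2}{w_1+w_2+w_3}$, whose value depends on the direction of approach inside $\mathcal E$ (it ranges over $[1/4,3/4)$). Thus ``by continuity'' does not yield the bound directly; one must control these corners by hand. Your conclusion $g\le\alpha$ for small $w_3$ is nevertheless correct --- indeed the paper proves it as Lemma~\ref{lem:p135p234} by an explicit (and somewhat laborious) case analysis, obtaining $g\le \tfrac56 + O(w_3)$.

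\textbf{On the Pemantle step.} This is the real gap. At the isolated equilibria $(1,0,1,0,1)$ and $(0,1,1,1,0)$ one has $p_{135}=1$ (resp.\ $p_{234}=1$), so the increment $Y(n+1)$ is \emph{deterministic} and $\Delta\bs M(n+1)=0$. The covariance of the noise therefore vanishes at these points and decays linearly with the distance to them; your assertion of ``non-degeneracy of the covariance in the unstable direction'' is false. The standard Pemantle non-convergence theorems require the noise to be bounded below uniformly near the unstable point, so they do not apply here. A refined argument tailored to linearly vanishing noise might work, but you have not supplied one. The paper sidesteps this entirely: it proves the \emph{global} inequality $p_{135}(w)+p_{234}(w)\le w_3\cdot\frac{w_3^2+1/2}{w_3+1/2}\le w_3$ on $\mathcal E$ (Lemma~\ref{lem:W3}), which immediately gives a coupling $W_3(n)\le U_n$ with a standard P\'olya urn; since $U_n/(n+2)$ converges to a Uniform$[0,1]$ variable, the limit of $\hat W_3$ is a.s.\ strictly less than~$1$, hence~$0$. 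This two-line coupling replaces your entire ODE/Pemantle machinery.

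Finally, you describe ruling out the equilibria $(1,0,1,0,1)$, $(0,1,1,1,0)$ as ``what makes the losange significantly harder''. In the paper this part is in fact the easy one; the genuinely delicate step is ruling out $\chi\in\{0,1\}$ (Proposition~\ref{prop:nonconv}), which occupies most of Section~\ref{sec:losange}.
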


The first idea of the proof is to compare $W_3(n)$ with the number of red balls in a two-colour Friedman-like urn defined as follows:

\begin{lemma}\label{lem:Flike_urn}
We define a Markov process $(R_n)_{n\geq 0}$ as follows: first $R_0=1$, and 
for all $n\geq 0$, we set $R_{n+1} = R_n + A_{n+1}$, 
where
\[\mathbb P(A_{n+1} = 1 \mid R_n) =  1- \mathbb P(A_{n+1} = 0 \mid R_n)
:= \frac{R_n}{n+2} \cdot\frac{(\frac{R_n}{n+2})^2+\frac12}{\frac{R_n}{n+2}+\frac12}.\]
Then almost surely when $n\to+\infty$, we have 
$R_n/n  \to 0$.  
\end{lemma}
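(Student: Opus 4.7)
The natural change of variables is $x_n := R_n/(n+2) \in [0,1]$; a short computation gives the stochastic approximation
\[
x_{n+1} \;=\; x_n + \frac{1}{n+3}\bigl(g(x_n) + \xi_{n+1}\bigr),
\]
where $g(x) := f(x) - x$ with $f(x) := x \cdot \frac{x^2 + 1/2}{x + 1/2}$, and $\xi_{n+1} := A_{n+1} - f(x_n)$ is a centred, bounded martingale increment. Simplifying,
\[
g(x) \;=\; \frac{x^2(x - 1)}{x + 1/2},
\]
so $g$ vanishes on $[0,1]$ precisely at $0$ and $1$, is strictly negative on $(0,1)$, satisfies $g(x) \sim -2x^2$ as $x \to 0^+$ (a double zero), and $g'(1) = 2/3 > 0$ (a simple zero).

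Since $g$ is Lipschitz on $[0,1]$ and the step sizes $1/(n+3)$ satisfy the usual Robbins--Monro conditions, classical one-dimensional stochastic-approximation theory (see, e.g., \cite{Duflo, Pemantle}) implies that $(x_n)$ converges almost surely to some element of the zero set $\{0,1\}$ of $g$. It is therefore enough to show that $\mathbb{P}(x_n \to 1) = 0$, for then $x_n \to 0$ a.s.\ and hence $R_n/n \to 0$ a.s.

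To rule out $x_n \to 1$, I would set $y_n := 1 - x_n$ and exploit the fact that, since $g'(1) > 0$, the point $1$ is a linearly unstable equilibrium of the ODE $\dot{x} = g(x)$. A direct computation yields $\mathbb{E}[y_{n+1} - y_n \mid \mathcal{F}_n] \sim \tfrac{2}{3}\, y_n/(n+3)$, so the conditional drift of $y$ points away from $0$ with strength of order $y_n/n$, and one expects a Pemantle-type non-convergence argument to give $y_n \not\to 0$ almost surely. The key technical obstacle is that the noise \emph{degenerates} at $1$: $\mathrm{Var}(\xi_{n+1} \mid \mathcal{F}_n) = f(x_n)(1 - f(x_n))$ vanishes linearly in $y_n$. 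This is not fatal, because downward jumps of $y$ (\emph{failures} of $A_{n+1}$) happen with probability $1 - f(1 - y_n) \sim \tfrac{5}{3} y_n$ and carry $y$ up by approximately $1/(n+3)$, so whenever $y_n$ drifts below scale $1/n$ a single failure suffices to kick $y_n$ back to that scale. Concretely, a supermartingale computation with the Lyapunov function $h(y) = y^{-p}$ for a small $p > 0$ is valid in the regime $y_n \gtrsim 1/n$, while a complementary Borel--Cantelli estimate controls the regime $y_n \lesssim 1/n$; together they yield $\liminf_n y_n > 0$ almost surely, which is the desired conclusion.
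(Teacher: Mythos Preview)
Your stochastic-approximation setup and the identification of the zero set $\{0,1\}$ of $g$ coincide with the paper's argument. (The paper phrases the convergence step slightly more concretely: since $g\le 0$ on $[0,1]$, the sequence $(x_n)$ is a bounded non-negative supermartingale and hence converges; then the $L^2$-bounded martingale $\sum \xi_{i+1}/(i+3)$ converges, forcing $\sum g(x_i)/(i+3)$ to converge and hence the limit of $x_n$ to lie in $g^{-1}(0)$.)

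The divergence is in ruling out the limit $1$. The paper's argument is much shorter than your instability analysis: since $\tfrac{x^2+1/2}{x+1/2}\le 1$ on $[0,1]$, one has $f(x)\le x$, so $(x_n)$ can be coupled below the proportion process of a standard P\'olya urn started from the same initial data. That P\'olya proportion converges almost surely to a uniform variable on $[0,1]$, hence is a.s.\ strictly less than $1$, and the coupling gives $\limsup_n x_n<1$ a.s. This bypasses entirely the degenerate-noise issue you raised.

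Your Pemantle-style route is not wrong in spirit, and in fact the situation is more favourable than your sketch suggests: since $R_n\le n+1$ one has the hard floor $y_n\ge 1/(n+2)$, so there is no separate ``$y_n\ll 1/n$'' regime to handle, and the supermartingale computation for $y_n^{-p}$ can be pushed through uniformly. Still, the standard non-convergence theorem (e.g.\ \cite{Pemantle}, Theorem~2.9) does not apply off the shelf because the noise variance vanishes at~$1$, and turning your sketch into a full proof requires real work. The one-line P\'olya coupling is the cleaner way in.
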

\begin{proof}
Let us define $Z_n:=R_n/(n+2)$, for all $n\ge 0$. We use stochastic approximation: by definition, we have that, for all $n\geq 0$,
\[Z_{n+1}=\frac{R_{n+1}}{n+3}
= \frac{R_n + A_{n+1}}{n+3}
= \frac{R_n}{n+2} \cdot \frac{n+2}{n+3} + \frac{A_{n+1}}{n+3}
= Z_n + \frac1{n+3}\big(A_{n+1} - Z_n\big).\]
For $n\geq 0$, set $\Delta M_{n+1} = A_{n+1}-\mathbb E \big[A_{n+1} \mid R_n\big]$.
By definition of the model, we have
\[\mathbb E \big[A_{n+1} \mid R_n\big]
= Z_n \cdot\frac{Z_n^2+\frac12}{Z_n+\frac12},\]
implying that
\[
Z_{n+1}
= Z_n + \frac1{n+3}\left(G(Z_n)+\Delta M_{n+1}\right),\]
where, for all $x\in[0,1]$,
\[G(x) = x \cdot\frac{x^2+\frac12}{x+\frac12}-x.\]
Note that $G(x) \leq 0$ for all $x\in[0,1]$. Thus $(Z_n)_{n\ge 0}$ is a non-negative supermartingale, and converges almost surely. 
Moreover, by definition $|\Delta M_n|\le 1$, for all $n\ge 0$, and thus
the martingale 
\[\widetilde M_n:= \sum_{i=1}^{n-1} \frac{\Delta M_{i+1}}{i+3},\]
converges almost surely, since it is bounded in $L^2$. It follows that the series $\sum G(Z_n)/n$ also converges almost surely, which implies that the limit of $(Z_n)_{n\ge 0}$ is necessarily a zero of $G$, that is either $0$ or $1$. 
To see that $Z_n\to 0$ almost surely, we couple $(Z_n)_{n\geq 0}$ with a P\'olya urn: this coupling is based on the fact that, by definition and because $\frac{x^2+1}{x+1}\leq 1$ for all $x\in [0,1]$, we have
\[\mathbb P(A_{n+1} = 1\mid Z_n)\leq Z_n.\]
Thus if we define a process $(U_n)_{n\geq 0}$ such that $U_0 = Z_0$ and, for all $n\geq 0$,
\[\mathbb P(U_{n+1} = U_n +1\mid U_n) = 1-\mathbb P(U_{n+1} = U_n\mid U_n)= U_n,\]
then $(U_n)_{n\geq 0}$ and $(Z_n)_{n\geq 0}$ can be coupled in a way that $Z_n\leq U_n$ almost surely for all $n\geq 0$. It is known that $U_n \to U$ almost surely when $n\to+\infty$, where $U$ is uniform on $[0,1]$. Thus $Z_n$ cannot converge to~1 and thus converges to 0 almost surely when $n\to+\infty$.
\end{proof}

The next step to prove Proposition~\ref{prop:W3} is to compute the probability that a walker reinforces the middle edge~3. 
Recall the definition~\eqref{set.E} of the set $\mathcal E$.   
\begin{lemma}\label{lem:prob_zigzag}
One has for all $w\in \mathcal E$, 
\[p_{135}(w) =   \frac{w_1w_3w_5}{(w_2+w_3+w_1w_4)(w_4+w_5)+w_2w_3+w_1w_3w_4}.\]
\end{lemma}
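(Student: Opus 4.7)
The plan is to compute $p_{135}(w)$ directly via a first-step analysis. The key combinatorial observation is that the trace $\mathcal G^{\sss (n)}$ has $\{1,3,5\}$ as its unique geodesic if and only if the walker's trajectory from $N$ to $F$ never crosses edge $2$ or edge $4$. Indeed, both $\{1,2\}$ and $\{4,5\}$ are paths of length $2$, strictly shorter than the length-$3$ path $\{1,3,5\}$; as soon as the walker crosses edge $2$ the shorter path $\{1,2\}$ already lies in the trace (the walker must have reached the intermediate vertex via edge $1$ first, since it is stopped at $F$), and analogously for edge $4$. Conversely, if edges $2$ and $4$ are never used, the only way to reach $F$ is the path $1$-$3$-$5$, which is then automatically the trace's unique geodesic.

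Label the two degree-$3$ vertices $A$ (incident to edges $1,2,3$) and $B$ (incident to edges $3,4,5$), and let $u$, $v_A$, $v_B$ be the probabilities of the event ``the walker reaches $F$ without ever crossing edge $2$ or edge $4$'' when started from $N$, $A$, $B$ respectively. Treating the crossings of edges $2$ and $4$ as absorption into a ``failure'' state, a one-step decomposition, combined with the identity $w_1+w_4=1$ valid on $\mathcal E$ (which simplifies $w_1/(w_1+w_4)$ to $w_1$), gives the system
\begin{align*}
u &= w_1\, v_A,\\
v_A\,(w_2+w_3+w_1w_4) &= w_3\, v_B,\\
v_B\,(w_3+w_4+w_5) &= w_5 + w_3\, v_A,
\end{align*}
where the second line is obtained by plugging $u=w_1v_A$ into the raw recursion $v_A(w_1+w_2+w_3)=w_1u+w_3v_B$ and using $1-w_1=w_4$.

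Eliminating $v_A$ between the second and third lines yields
\[
v_B\bigl[(w_3+w_4+w_5)(w_2+w_3+w_1w_4)-w_3^2\bigr]=w_5\,(w_2+w_3+w_1w_4),
\]
and substituting back into $u=w_1v_A=w_1w_3v_B/(w_2+w_3+w_1w_4)$ produces the formula with denominator $(w_3+w_4+w_5)(w_2+w_3+w_1w_4)-w_3^2$. Expanding the first factor as $w_3+(w_4+w_5)$ and noting that the cross-term $w_3\cdot w_3$ cancels the subtracted $w_3^2$ rewrites this denominator as $(w_2+w_3+w_1w_4)(w_4+w_5)+w_2w_3+w_1w_3w_4$, matching the claim.

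There is no substantive obstacle: the only conceptual ingredient is the equivalence between ``$\{1,3,5\}$ is the reinforced geodesic'' and ``the walker avoids edges $2$ and $4$'', after which everything reduces to a $2\times 2$ linear system, made notably cleaner by the normalisation $w_1+w_4=1$ inherited from the invariant set $\mathcal E$.
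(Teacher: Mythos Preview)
Your proof is correct and rests on the same key observation as the paper's: reinforcing $\{1,3,5\}$ is equivalent to the walker reaching $F$ without ever crossing edge~$2$ or edge~$4$. The paper then factorises this probability as $p_i\cdot p_{ii}\cdot p_{iii}$ (first step through edge~$1$; from the left vertex, reach the right vertex before crossing $2$ or $4$; from the right vertex, reach $F$ before crossing $2$ or $4$) and evaluates each factor via series--parallel conductance reductions, whereas you solve the equivalent first-step linear system for $(u,v_A,v_B)$ directly---the underlying algebra is identical, and your route is arguably more self-contained since it avoids appealing to electrical-network pictures. One small imprecision in your combinatorial justification: the parenthetical ``the walker must have reached the intermediate vertex via edge $1$ first'' is not literally true, since the walker could reach $A$ via edge~$3$ after first going $N\to B$ through edge~$4$. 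The clean fix is to argue contrapositively: if $\{1,3,5\}$ is the reinforced geodesic then edges $1$ and $5$ already lie in the trace, so crossing edge~$2$ (resp.\ edge~$4$) would place the strictly shorter path $\{1,2\}$ (resp.\ $\{4,5\}$) in the trace, a contradiction.
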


\begin{proof} 
We call ``left'' vertex the vertex linked to edges 1, 2 and 3, and ``right'' vertex the vertex between edges 3, 4 and 5. To renforce edges 1, 3 and 5, a walker has to
\begin{enumerate}[(i)]
\item go through edge 1 in its first step,
\item then, from the left vertex, reach the right vertex before going through edge~2,
\item finally, from the right vertex, reach the food before going through edge~2 or~4.
\end{enumerate}
Let us denote by $p_i$, $p_{ii}$ and $p_{iii}$ the respective probabilities of these three events; we thus have $p_{135}(w) = p_i p_{ii}p_{iii}$. First note that
\[p_i = \frac{w_1}{w_1+w_4} = w_1,\]
using for the last equality that $w\in \mathcal E$.
To calculate $p_{ii}$ and $p_{iii}$, we use effective conductances. One can check that $p_{ii}$ is the probability that a random walker starting from the black dot in the left-hand side of Figure~\ref{fig:ii} reached the white dot before reaching one of the crosses.  
In Figure~\ref{fig:ii}, we use the parallel and series formulas for effective conductances to simplify the left-hand side graph into the equivalent (in terms of effective conductances) right-hand side graph. In the right-hand side graph, it is easy to see that the probability to reach the white dot before the cross starting from the black dot is
\[p_{ii} = \frac{w_3}{w_2+w_3+\frac{w_1w_4}{w_1+w_4}} =\frac{w_3}{w_2+w_3+w_1w_4} ,\]
using again that $w\in \mathcal E$ for the last equality. 
\begin{figure}
\begin{center}
\includegraphics[width=12cm]{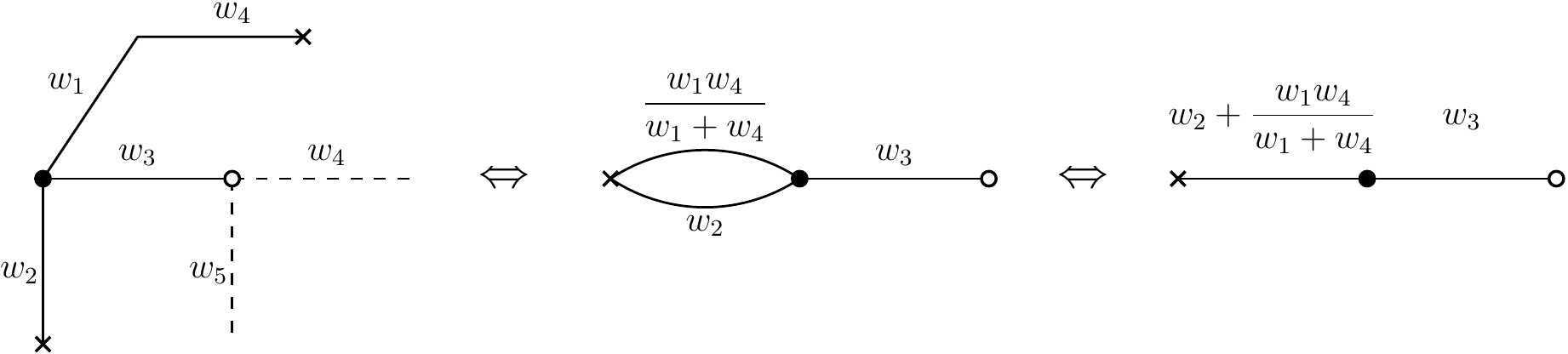}
\end{center}
\caption{Calculation of the probability of $(ii)$, the event that a random walker starting at the black dot reaches the white dot before reaching the crosses. The dashed edges in the left-hand side picture have no effect on the calculation and can be removed. In terms of effective conductances between the black dot and the crosses and the black dot and white dot, these three graphs are equivalent.}
\label{fig:ii}
\end{figure}
Similarly, one can check that $p_{iii}$ is the probability that a walker staring from the black dot in the left-hand side of Figure~\ref{fig:iii} reaches the white dot before reaching one of the crosses. Using the calculation of effective conductances done in Figure~\ref{fig:iii}, we eventually get that
\[p_{iii} = \frac{w_5}{w_4+w_5+\frac{w_2w_3(w_1+w_4)+w_1w_3w_4}{(w_2+w_3)(w_1+w_4)+w_1w_4}} =\frac{w_5}{w_4+w_5+\frac{w_2w_3+w_1w_3w_4}{w_2+w_3+w_1w_4}} ,\]
which concludes the proof, {since, for all $w\in\mathcal E$, $w_1+w_4 = 1$}.
\begin{figure}
\begin{center}
\includegraphics[width=14cm]{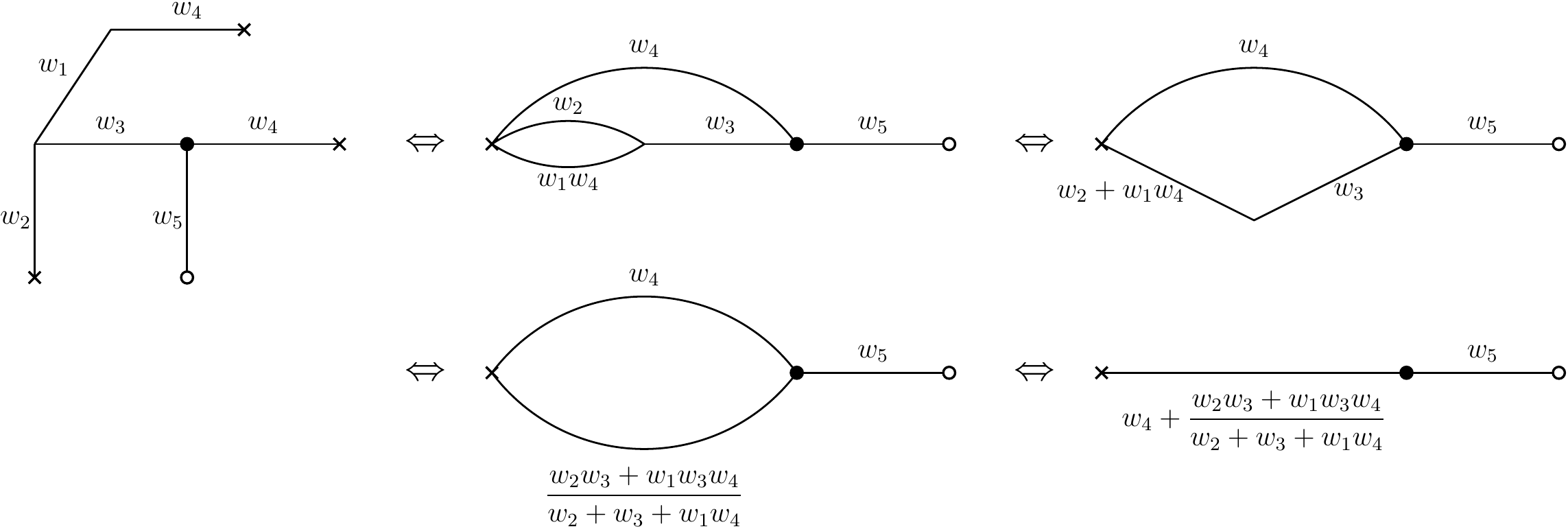}
\end{center}
\caption{Calculation of the probability of $(iii)$ in the case $w_1+w_4=1$, the event that a random walker starting at the black dot reaches the white dot before reaching the crosses. In terms of effective conductances between the black dot and the crosses and the black dot and white dot, these five graphs are equivalent.}
\label{fig:iii}
\end{figure}
\end{proof}

We deduce the following result, proving the first part of Proposition~\ref{prop:W3}. 

\begin{lemma}\label{lem:W3}
One has for all $w\in \mathcal E$, 
\[
p_{135}(w)+p_{234}(w) \le  w_3 \cdot \frac{w_3^2+\frac 12}{w_3+\frac 12}, 
\]
and as a consequence almost surely, 
\[\lim_{n\to \infty} \, \frac{W_3(n)}{n} = 0. \] 
\end{lemma}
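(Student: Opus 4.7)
The plan is to derive the algebraic inequality from the explicit formula for $p_{135}(w)$ given by Lemma~\ref{lem:prob_zigzag} together with the analogous formula for $p_{234}(w)$, and then to deduce the almost sure convergence by coupling $(W_3(n))_{n\ge 0}$ with the urn process $(R_n)_{n\ge 0}$ of Lemma~\ref{lem:Flike_urn}.

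For the inequality, I would use the symmetry of the losange graph under the relabelling exchanging edges $1\leftrightarrow 4$ and $2\leftrightarrow 5$, which gives directly
\[p_{234}(w) \;=\; \frac{w_4 w_3 w_2}{(w_5 + w_3 + w_4 w_1)(w_1 + w_2) + w_5 w_3 + w_4 w_3 w_1}.\]
Setting $a = w_1$, $b = w_2$, $c = w_3$, the constraints $w_1 + w_4 = 1$ and $w_2 + w_5 = 1$ coming from $w \in \mathcal E$ reduce both expressions to rational functions of $(a,b,c) \in [0,1]^3$, subject to the remaining inequalities $|a - b| \le c$, $|b - a| \le c$, $a + b \ge c$, and $2 - a - b \ge c$. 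The target inequality $p_{135}(w) + p_{234}(w) \le c (c^2 + \tfrac12)/(c + \tfrac12)$ then becomes a polynomial inequality in $(a,b,c)$ which I would verify by clearing denominators and checking non-negativity on the feasible region, ideally by grouping terms so that each group is manifestly non-negative thanks to the constraints defining $\mathcal E$. The symmetry under $(a,b) \leftrightarrow (1-a, 1-b)$ (i.e.\ reflecting the losange) should halve the work.

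For the almost sure convergence, the key observation is that, by construction,
\[\mathbb P\bigl(W_3(n+1) = W_3(n) + 1 \mid \mathcal F_n\bigr) \;=\; p_{135}(\hat{\bf W}(n)) + p_{234}(\hat{\bf W}(n)) \;\le\; g(\hat W_3(n)),\]
where $g(x) := x(x^2 + \tfrac12)/(x + \tfrac12)$. A direct derivative computation shows that $g$ is non-decreasing on $[0,1]$. One then couples $(W_3(n))_{n\ge 0}$ with the urn process $(R_n)_{n\ge 0}$ of Lemma~\ref{lem:Flike_urn} using a single sequence $(U_n)_{n\ge 1}$ of i.i.d.\ uniforms on $[0,1]$ to drive both increments: if $W_3(n) \le R_n$, then $g(W_3(n)/(n+2)) \le g(R_n/(n+2))$ by monotonicity, so the event that $W_3$ is incremented at step $n+1$ (namely $U_{n+1} \le p_{135}(\hat{\bf W}(n))+p_{234}(\hat{\bf W}(n))$) is contained in the event that $R$ is incremented (namely $U_{n+1} \le g(R_n/(n+2))$), preserving $W_3(n+1) \le R_{n+1}$. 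Since $W_3(0) = R_0 = 1$, an induction gives $W_3(n) \le R_n$ almost surely for all $n$, and Lemma~\ref{lem:Flike_urn} then yields $W_3(n)/n \to 0$ almost surely.

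The main obstacle is the polynomial verification in the first step: once everything is expanded, the resulting identity is unenlightening, and care must be taken to use the constraints defining $\mathcal E$ (in particular $c \le a + b$ and $c \le 2 - a - b$) in order to control the cross terms. I would not be surprised if the cleanest proof proceeds by writing the difference $c\cdot g(c) - p_{135}(w) - p_{234}(w)$ as a combination with non-negative coefficients of the slack quantities $c - |a-b|$, $(a+b) - c$, and $(2-a-b) - c$. Note that the claim in the lemma is strictly weaker than the polynomial speed asserted in Proposition~\ref{prop:W3}: obtaining the polynomial rate $W_3(n) = o(n^\alpha)$ for some $\alpha < 1$ would require a finer analysis, for instance by refining the comparison function $g$ so that it is strictly bounded away from the identity on a neighborhood of $0$, which the bound proved here already achieves since $g(x)/x = (x^2 + 1/2)/(x+1/2) \to 1$ only as $x \to 1$.
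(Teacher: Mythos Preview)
Your coupling argument for the convergence is correct and essentially matches the paper's, and you are right to make the monotonicity of $g$ explicit (the paper's phrasing is slightly elliptic on this point).

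For the inequality, however, your proposal is a plan rather than a proof: you defer to a ``polynomial verification'' that you acknowledge as the main obstacle and do not carry out. The paper avoids brute force entirely. Starting from the formula in Lemma~\ref{lem:prob_zigzag}, it drops non-negative terms from the denominator and uses $w_2+w_3\ge w_1$ together with $w_2+w_5=1$ to obtain the clean bound
\[ p_{135}(w) \;\le\; \frac{w_1 w_3 w_5}{w_3 + w_1 w_4 + w_2 w_5}, \]
and symmetrically for $p_{234}$. Adding these over the \emph{common} denominator $D := w_3 + w_1 w_4 + w_2 w_5$ gives
\[ p_{135}(w)+p_{234}(w) \;\le\; w_3\cdot \frac{w_1 w_5 + w_2 w_4}{D}. \]
The identity $w_1 w_5 + w_2 w_4 = w_1 w_4 + w_2 w_5 + (w_1-w_2)(w_5-w_4)$, combined with $|w_1-w_2|\le w_3$ and $|w_5-w_4|\le w_3$, yields $w_1 w_5 + w_2 w_4 \le S + w_3^2$ where $S:=w_1w_4+w_2w_5$, so
\[ p_{135}(w)+p_{234}(w) \;\le\; w_3\cdot \frac{S+w_3^2}{S+w_3}. \]
Since this last ratio is non-decreasing in $S$ for $w_3\in[0,1]$, and $S\le \tfrac12$ (each of $w_1w_4$, $w_2w_5$ is at most $\tfrac14$), the target inequality follows. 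This route makes transparent exactly which constraint in $\mathcal E$ is used at each step, in contrast to an opaque clearing-of-denominators computation.

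One correction to your closing remark: the ratio $g(x)/x=(x^2+\tfrac12)/(x+\tfrac12)$ tends to $1$ as $x\to 0$ as well, not only as $x\to 1$. Thus the bound of this lemma does \emph{not} separate $g$ from the identity near $0$; that is precisely why a sharper estimate (Lemma~\ref{lem:p135p234}) is needed to obtain the polynomial rate in Proposition~\ref{prop:W3}.
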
 
\begin{proof}
The idea is the following: we run the ants walk from time~0, and simultaneously, we consider an urn that contains black and red balls. We call this urn the ``ants urn''. At time zero, we put one black ball and one red ball in the urn, and everytime an ant reaches the food in the ants walk process, we add a ball into the urn: this ball is red if edge number 3 has been reinforced by this ant, black otherwise. The first part of the lemma will show that this urn can be coupled with a Friedman-like urn of Lemma~\ref{lem:Flike_urn} so that there are always more red balls in the Friedman-like urn.

By Lemmas~\ref{lem:setE} and~\ref{lem:prob_zigzag} we have for all $w\in \mathcal E$,  
\begin{equation}\label{eq:p135_utile}
p_{135}(w) 
=  \frac{w_1w_3w_5}{(w_2+w_3+w_1w_4)(w_4+w_5)+w_2w_3+w_1w_3w_4} \le \frac{w_1w_3w_5}{(w_3+w_2)(w_4+w_5) +w_3w_2 }. \end{equation}
Using that $w_3+w_2 \ge w_1$, and $w_2+w_5 = 1$, we deduce
\begin{equation}\label{eq:borne_p135}
p_{135}(w)  \le \frac{w_1w_3w_5}{w_3 + w_1w_4+w_2w_5}.
\end{equation}
By symmetry, we have that
\[p_{234}(w) \leq \frac{w_2w_3w_4}{w_3 + w_1w_4+w_2w_5},\] 
and thus, the probability that the $n$-th walker reinforces edge 3 is at most
\[p_{135}(w)+p_{234}(w) \leq w_3 \cdot \frac{w_1w_5+ w_2w_4}{w_3+w_1w_4 + w_2w_5} .\]
Finally, we note that 
\[w_1w_5 +w_2w_4 = w_1w_4 + w_2w_5 + (w_1-w_2)(w_5-w_4) \le w_1w_4 + w_2w_5 + w_3^2,\] 
which entails 
\[p_{135}(w)+p_{234}(w) \leq w_3 \cdot \frac{w_1w_4+ w_2w_5+w_3^2}{w_1w_4 + w_2w_5+w_3}  
\leq {w_3 \left(1-\frac{w_3 (1-w_3)}{w_1w_4+w_2w_5+w_3}\right)}.\]
Recalling next that, for all $x\in[0,1]$, $x(1-x)\leq \nicefrac14$  and that $w_1+w_4=w_2+w_5=1$, we have that $w_1w_4+w_2w_5\le \nicefrac12$, which implies 
\[
p_{135}(w)+p_{234}(w) \le w_3 \left(1-\frac{w_3(1-w_3)}{w_3+\frac 12}\right)
= w_3 \cdot \frac{w_3^2+\frac 12}{w_3+\frac 12},  
\]
proving the first part of the lemma. Applying this with $w=\hat{{\bf W}}(n)$,
we thus have proved that, at every time step $n$, the probability to add a red ball in the ants-urn is at most the probability to add a red ball in the Friedman-like urn of Lemma~\ref{lem:Flike_urn}. Therefore, the number of red balls in the ants urn (i.e.\ $W_3(n)$) is at most $R_n$ at time~$n$ (for all $n\geq 0$), where $R_n$ is the quantity defined in Lemma~\ref{lem:Flike_urn}. Thus the result follows from Lemma~\ref{lem:Flike_urn}. 
\end{proof}

\begin{remark}
It is interesting to note that, in the loop-erased ant process, one has
\[p_{135} = \frac{w_1w_3w_5}{w_3 + w_1w_4+w_2w_5},\]
to compare with Equation~\eqref{eq:borne_p135}.
This means that Lemma~\ref{lem:W3} holds in this case too.
However, to show almost-sure convergence of $\hat{{\bf W}}(n)$, we need to know that the convergence of $\hat W_3(n)$ to zero has polynomial speed. This is done in the following lemma, whose proof relies on a better bound, using the equality in Equation~\eqref{eq:p135_utile}.
Therefore, the fact that this better bound does not hold in the loop-erased case 
is the reason why we believe that the proof of Conjecture~\ref{conj} in that case is more intricate.
\end{remark}

We now aim at bootstraping the previous result to get a polynomial speed of convergence. For this we will need the following fact.

\begin{lemma}\label{lem:p135p234}
For any $\rho\in (0,{\nicefrac16})$, there exists $\varepsilon>0$ 
such that for any $w\in \mathcal E$ satisfying $w_3\le \varepsilon$, 
\[p_{135}(w) + p_{234}(w) \le (1-\rho)w_3.\]
\end{lemma}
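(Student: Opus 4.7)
The strategy is to extract the precise asymptotics of $(p_{135}(w)+p_{234}(w))/w_3$ as $w_3\to 0$ uniformly over $w\in\mathcal E$, and show that the $\limsup$ is at most $3/4$, which is strictly less than $1-\rho$ for any $\rho<1/6$. To set up, I would invoke the reflection symmetry of the losange that swaps edges $1\leftrightarrow 4$ and $2\leftrightarrow 5$ while fixing edge $3$; this gives $p_{234}(w)=p_{135}(w_4,w_5,w_3,w_1,w_2)$, so using $w_4=1-w_1$ and $w_5=1-w_2$ (from $w\in\mathcal E$), one obtains
$$\frac{p_{135}(w)+p_{234}(w)}{w_3} \;=\; \frac{w_1(1-w_2)}{D_1(w)} \,+\, \frac{(1-w_1)w_2}{D_2(w)},$$
where $D_1$ is the denominator in Lemma~\ref{lem:prob_zigzag} and $D_2$ is its image under the same reflection.

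To establish the uniform $\limsup$ bound I would split $\mathcal E\cap\{w_3\le\varepsilon\}$ into two regimes according to the value of $a:=w_1$. In the \emph{interior regime} $a\in[\delta,1-\delta]$ for some fixed small $\delta>0$, the constraint $|w_1-w_2|\le w_3$ forces $w_2=a+O(w_3)$ and keeps $D_1,D_2$ bounded away from zero, so a direct Taylor expansion in $w_3$ yields
$$\lim_{w_3\to 0}\frac{p_{135}(w)+p_{234}(w)}{w_3} \;=\; \frac{1}{2(2-a)}+\frac{1}{2(1+a)} \;=\; \frac{3}{2(2+a(1-a))} \;\le\; \frac{3}{2(2+\delta(1-\delta))} \;<\; \tfrac34.$$
In the \emph{extreme regime} near $(0,0,0,1,1)$, where $w_1,w_2,w_3$ are all $O(\varepsilon)$ (and symmetrically near $(1,1,0,0,0)$), I would expand the exact formulas using $w_4,w_5=1+O(\varepsilon)$ to get, after grouping terms,
$$\frac{p_{135}(w)+p_{234}(w)}{w_3} \;=\; \frac{w_1+2w_2}{2(w_1+w_2+w_3)} \,+\, O(w_1+w_2+w_3).$$
The constraint $w_2-w_1\le w_3$ inherited from the definition of $\mathcal E$ then gives $w_2\le w_1+3w_3$, which is exactly what is needed to ensure $\frac{w_1+2w_2}{2(w_1+w_2+w_3)}\le \tfrac34$; hence the ratio is $\le 3/4+O(\varepsilon)$ in this regime. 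The symmetric regime near $(1,1,0,0,0)$ is handled identically by the $L\leftrightarrow R$ reflection.

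Combining, choosing first $\delta>0$ small so the interior limit is below $5/6$ and then $\varepsilon>0$ small so both the error in the interior Taylor expansion and the $O(\varepsilon)$ correction in the extreme regimes keep the sum below $1-\rho$ concludes the proof. The main obstacle is that the $\limsup$ at the extreme points $(0,0,0,1,1)$ and $(1,1,0,0,0)$ is genuinely $3/4$ (approached, for instance, along $w_1=w_2=\delta$, $w_3\ll \delta$), so the gain over the naive bound $(1-o(1))w_3$ from Lemma~\ref{lem:W3} comes \emph{only} from the constraint $|w_1-w_2|\le w_3$; verifying that the remainder terms in the Taylor expansion are uniform in this extreme regime (rather than depending, say, on $w_1/w_3$) is the only delicate piece, and it is handled by noting that the leading denominator there is $2(w_1+w_2+w_3)+O((w_1+w_2+w_3)^2)$ and the leading numerator is $O(w_1+w_2)$, so relative errors are $O(w_1+w_2+w_3)=O(\varepsilon)$.
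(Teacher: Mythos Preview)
Your approach is correct and genuinely different from the paper's. The paper never computes limits or splits into interior and boundary regimes; instead it works entirely with explicit inequalities. It first shows $p_{135}(w)\le \tfrac{w_3}{2}+2w_3^2$ for all $w\in\mathcal E$ with $w_3\le\tfrac14$ (by a case split on $w_1\gtrless w_2$ and the denominator bound $D_1\ge 2w_1w_5-w_3^2$), then bootstraps by retaining the terms $w_1w_4^2+w_1w_4w_5$ in the denominator to get the sharper $p_{135}(w)\le \tfrac{w_3}{3}+w_3^2$ whenever $w_4\ge\tfrac12$. Since always $w_1\ge\tfrac12$ or $w_4\ge\tfrac12$, combining with the symmetric bound for $p_{234}$ yields $p_{135}+p_{234}\le \tfrac{5}{6}w_3+3w_3^2$. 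Your asymptotic argument is more conceptual and in fact sharper: the $\limsup$ you identify is $3/4$, not $5/6$, so your route actually proves the lemma for every $\rho<\tfrac14$ (and would permit $\alpha>\tfrac34$ in Lemma~\ref{lem:W3poldec}). The paper's approach, by contrast, gives completely explicit error terms valid for all $w_3\le\tfrac14$ without any limiting procedure.

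One point to tighten in your write-up: in the extreme regime near $(0,0,0,1,1)$ the variables $w_1,w_2$ are $O(\delta)$, not $O(\varepsilon)$, so the correction in your expansion is $O(\delta+\varepsilon)$ rather than $O(\varepsilon)$. Consequently the parameter choice at the end should read: first pick $\delta$ small so that $\tfrac34+C\delta<1-\rho$ (possible since $1-\rho>\tfrac56>\tfrac34$), then pick $\varepsilon\le\delta$ small enough to control the interior Taylor error (whose constant depends on $\delta$ through the lower bound on $D_1,D_2$). Your phrase ``choosing first $\delta$ small so the interior limit is below $5/6$'' is off, since $\tfrac{3}{2(2+\delta(1-\delta))}<\tfrac34$ for every $\delta\in(0,1)$; it is the \emph{extreme} regime that forces $\delta$ to be small.
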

\begin{proof}
By Lemma~\ref{lem:prob_zigzag}, for any $w\in \mathcal E$, 
\begin{equation}\label{eq.p135}
p_{135}(w) = \frac{w_1w_3w_5}{w_3(1+w_4+w_1w_4) + w_2w_4 + w_2w_5 + w_1w_4^2 + w_1w_4w_5 }. 
\end{equation}

Assume that $w_3<\nicefrac14$. 
Let us first prove a lower bound on the denominator of \eqref{eq.p135}.
 This denominator is at least equal to
$w_3(1+w_4) + w_2w_4 + w_2w_5$, and we would like to prove that
\begin{equation}\label{eq:first_ineq_denom}
w_3(1+w_4) + w_2w_4 + w_2w_5\geq -w_3^2 +2w_1w_5.
\end{equation}
Indeed, first using the fact that, for all $w\in\mathcal E$, $w_4\geq w_5-w_3$, we get
\[w_3(1+w_4) + w_2w_4 + w_2w_5
\geq w_3(1+w_4) + w_2(w_5-w_3) + w_2w_5
\geq w_3 (1+w_4-w_2) + 2w_2w_5.
\]
Now, using the facts that, for all $w\in\mathcal E$, 
$w_2\geq w_1-w_3$, $w_4-w_5\ge -w_3$, and $1-w_2 = w_5$, we get that 
\[w_3(1+w_4) + w_2w_4 + w_2w_5
\geq w_3 (w_4+w_5) + 2(w_1-w_3)w_5\geq w_3 (w_4-w_5)+2w_1w_5
\geq -w_3^2 +2w_1w_5,\] 
which concludes the proof of~\eqref{eq:first_ineq_denom}.

Next we distinguish two cases: either $w_2\ge w_1$ or $w_2<w_1$.

$\bullet$ We first treat the case when $w_1\ge w_2$ and, as a consequence, $w_5\ge w_4$.
Plugging Equation~\eqref{eq:first_ineq_denom} into Equation~\eqref{eq.p135}, 
we thus get
\[
p_{135}(w)   
\le \frac{w_1w_3w_5}{2w_1w_5-w_3^2}.
\]
Since $w\in \mathcal E$, we have $w_1+w_2\ge w_3$, which, since $w_1\geq w_2$ implies $w_1\ge \nicefrac{w_3}2$. Similarly, the facts that $w_4+w_5\ge w_3$ and $w_5\geq w_4$ imply that $w_5\ge \nicefrac{w_3}2$. 
Moreover, since $w\in \mathcal E$, we have $w_1+w_4 = 1$, 
and thus either $w_1\ge \nicefrac12$ or $w_4\ge \nicefrac12$. 
If $w_1\ge \nicefrac12$ then we conclude that $w_1w_5\geq \nicefrac{w_5}2\geq \nicefrac{w_3}4$.
If $w_4\ge \nicefrac12$, then $w_5\ge w_4\ge \nicefrac12$, and we also get $w_1w_5\geq \nicefrac{w_3}4$ in this case. Therefore, in both cases ($w_1\ge \nicefrac12$ and $w_4\ge \nicefrac12$), using the fact that $\frac 1{1-x} \le 1+2x$ for all $0\le x\le \nicefrac12$, we get
\[p_{135}(w)\le \frac{w_3}{2(1-2w_3)}
\leq \frac{w_3}{2} + 2w_3^2,\]
as long as $w_3<\nicefrac14$.

$\bullet$ We now treat the case when 
$w_2\ge w_1$, which implies $w_4\ge w_5$. 
In that case, it is straightforward to see that the denominator in~\eqref{eq.p135} is at least $w_2(w_4+w_5)\geq 2w_1w_5$, which implies
$p_{135}(w)\le w_3/2$.

By the two cases above, we have thus proved that, for all $w\in \mathcal E$
such that $w_3\le\nicefrac14$,
\[p_{135}(w)\le \frac{w_3}{2} + 2w_3^2.\]
Note that by symmetry the same inequality holds for $p_{234}(w)$, i.e.\ for all $w\in\mathcal E$,
\begin{equation}\label{eq:bootstrap}
\max(p_{135}(w),p_{234}(w))\le \frac{w_3}{2} + 2w_3^2, 
\end{equation}
 but this is not yet enough to conclude the proof: we need to get a better upper bound by taking into account the terms in the denominator 
 of Equation~\eqref{eq.p135} that we previously neglected.

To do that, we again distinguish two cases: 
first assume that $w_4\ge \nicefrac12$. 
In this case, using the fact that for all $w\in\mathcal E$, $w_4\geq w_5-w_3$, we get
\[w_1w_4^2 \ge w_1w_4w_5 - w_1w_3w_4
\ge \frac 12 w_1w_5 - w_1w_3w_4,\]
and since we assume that $w_4\ge \nicefrac12$, we also get
\[w_1w_4w_5\geq \frac12w_1w_5.\]
Now if in addition $w_5\ge w_4$, one has $w_1\ge w_2$ and thus $w_1\ge w_3/2$, as well as $w_5-w_4\le w_3 \le 4 w_1w_5$. This, together with the last two displays and~\eqref{eq.p135} implies
\[p_{135}(w) \le \frac{w_1w_3w_5}{3w_1w_5 + w_3(w_4-w_5)} 
\leq \frac{w_3}{3}\cdot \frac1{1-\frac{4w_3}3}
\leq  \frac{w_3}{3}\Big(1+\frac{8w_3}3\Big),\]
as long as $w_3\leq \nicefrac38$.
We thus get that, for all $w\in\mathcal E$ such that $w_3\leq \nicefrac38$, and $w_4\ge \nicefrac 12$,
\[p_{135}(w) \le \frac{w_3}3 + w_3^2.\]
We now need to treat the case when $w_4\le \nicefrac12$. 
In that case, $w_1\ge \nicefrac12$ and we get, by symmetry, 
\[p_{234}(w) \le  \frac{w_3}3 + w_3^2.\]
In both cases ($w_4\geq \nicefrac12$ and $w_1\geq \nicefrac12$), 
using Equation~\eqref{eq:bootstrap}, we get
\[p_{135}(w)+p_{234}(w) \leq \Big(\frac12+\frac13\Big)w_3 + 3w_3^2
= \frac{5w_3}{6}+3w_3^2,\]
as long as $w_3\leq \nicefrac14$, 
and the lemma follows.
\end{proof}

\begin{lemma}\label{lem:W3poldec}
Almost surely, for any $\alpha>5/6$, 
\[\lim_{n\to \infty} \frac{W_3(n)}{n^{\alpha}} = 0.\]
\end{lemma}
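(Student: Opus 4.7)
The plan is to bootstrap the almost sure convergence $\hat W_3(n)\to 0$ from Lemma~\ref{lem:W3} together with the sharper drift estimate of Lemma~\ref{lem:p135p234} into a polynomial decay, by feeding them into the stochastic approximation equation~\eqref{eq:algo_sto}. The key observation is that once $\hat W_3(n)$ is eventually below the threshold $\varepsilon$ from Lemma~\ref{lem:p135p234}, the third coordinate of the drift $F_3$ becomes bounded above by $-\rho \hat W_3(n)$, giving a contraction with rate $\rho/(n+3)$ per step, exactly the right input for an $n^{-\rho}$ decay.

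Fix $\alpha>\nicefrac56$ and choose $\rho\in(1-\alpha,\nicefrac16)$. By Lemma~\ref{lem:p135p234}, there exists $\varepsilon>0$ such that
\[
F_3(w) \;:=\; p_{135}(w)+p_{234}(w)-w_3 \;\le\; -\rho\, w_3
\qquad\text{whenever } w\in\mathcal E \text{ and } w_3\le \varepsilon.
\]
By Lemma~\ref{lem:W3}, $\hat W_3(n)\to 0$ almost surely, so the random index $N_0:=\inf\{n\ge 0:\hat W_3(k)\le\varepsilon\text{ for all }k\ge n\}$ is almost surely finite. The third coordinate of~\eqref{eq:algo_sto} then yields, for all $n\ge N_0$,
\[
\hat W_3(n+1)\;\le\;\Big(1-\frac{\rho}{n+3}\Big)\,\hat W_3(n)\;+\;\frac{\Delta M_3(n+1)}{n+3}.
\]

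Next, I would divide this inequality through by the deterministic product $\pi_n:=\prod_{k=N_0}^{n-1}(1-\rho/(k+3))$ (with $\pi_{N_0}:=1$), which satisfies $\pi_n=\Theta(n^{-\rho})$. Setting $u_n:=\hat W_3(n)/\pi_n$, the recursion above rearranges to
\[
u_{n+1}\;\le\;u_n\;+\;\frac{\Delta M_3(n+1)}{(n+3)\,\pi_{n+1}}\qquad (n\ge N_0),
\]
so that $(u_n)$ is dominated by a martingale. Since $|\Delta M_3(n+1)|\le 2$ and $(n+3)\,\pi_{n+1}=\Theta(n^{1-\rho})$, the increments of this martingale have variances of order $n^{2\rho-2}$, which are summable because $\rho<\nicefrac12$. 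Hence the martingale converges almost surely and $(u_n)$ is almost surely bounded above. This yields
\[
W_3(n)=(n+2)\,\hat W_3(n)\;\le\; C(\omega)\, n^{1-\rho}\quad\text{a.s.,}
\]
and since $1-\rho<\alpha$ by our choice of $\rho$, we conclude that $W_3(n)/n^{\alpha}\to 0$ almost surely.

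The only mild technical point is that $N_0$ is random and is not a stopping time (it involves looking at the whole tail of the trajectory), so one cannot directly apply optional-stopping-type arguments. This is handled by a standard localisation: decompose along the events $\{N_0\le m\}$, $m\ge 0$; on each such event the product $\pi_n$ and the dominating martingale are well defined for $n\ge m$, and the above estimate applies. Since $\mathbb P(\bigcup_m\{N_0\le m\})=1$, the almost sure bound follows. No new ingredient beyond Lemmas~\ref{lem:W3} and~\ref{lem:p135p234} and the stochastic approximation framework is needed, and the exponent $\nicefrac56$ is exactly what the factor $1/6$ in Lemma~\ref{lem:p135p234} produces.
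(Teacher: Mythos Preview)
Your proof is correct and follows essentially the same strategy as the paper: exploit the drift estimate $F_3(w)\le -\rho w_3$ from Lemma~\ref{lem:p135p234} (available once $\hat W_3(n)\le\varepsilon$, which happens eventually by Lemma~\ref{lem:W3}) to obtain a contraction of rate $\rho/(n+3)$, and then control the martingale noise. The paper normalises by $n^{-\alpha}$ and studies $Z_n=W_3(n)/n^\alpha$ directly, iterating the one-step bound and using Doob's $L^2$-inequality plus Borel--Cantelli to handle the martingale sums uniformly in the (random) starting time; you instead divide by the explicit product $\pi_n\asymp n^{-\rho}$ and observe that the resulting dominating martingale is bounded in $L^2$. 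These are two standard and equivalent ways to organise the same stochastic-approximation argument, and your localisation over the events $\{N_0\le m\}$ to deal with the non-stopping nature of $N_0$ is the natural workaround (the paper's Borel--Cantelli argument plays the same role). One cosmetic remark: $|\Delta M_3(n+1)|\le 1$ rather than $2$, since $Y_3(n+1)\in\{0,1\}$, but this does not affect the argument.
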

\begin{proof}
Fix $\alpha>5/6$, and set $Z_n := n^{-\alpha}\cdot W_3(n)$. Using Equation~\eqref{eq:algo_sto}, we get, for all $n\ge 1$, 
\[
Z_{n+1} = Z_n \cdot \left(1-\frac 1{n+1}\right)^{\alpha} + \frac{W_3(n+1)-W_3(n)}{(n+1)^{\alpha}}  = Z_n + \frac{r_n}{n+1}+ \frac{\Delta M_3(n+1)}{(n+1)^\alpha},
\]
with $r_n=(n+1)^{1-\alpha}\mathbb E_n Y_3(n+1) - \alpha Z_n + \mathcal O(Z_n/n)$,
almost surely when $n\to+\infty$.
Recall that $Y(n+1) = {\bf W}(n+1)-{\bf W}(n)$, and thus
\[\mathbb E_nY_3(n+1) = p_{135}(\hat{{\bf W}}(n)) + p_{234}(\hat{{\bf W}}(n))
\leq \alpha\hat W_3(n),\]
almost surely
for all $n$ large enough,
by Lemmas~\ref{lem:W3} and~\ref{lem:p135p234}. 
Therefore, almost surely for $n$ large enough, $r_n \le -\delta Z_n$, 
for some constant $\delta>0$. 
As a consequence, almost surely there exists $m\ge 1$, such that for all $n>m$, 
\[Z_n \le  \gamma_{m,n} \cdot Z_m  +\sum_{i=m+1}^{n} \frac{\gamma_{i,n}\cdot \Delta M_3(i)}{i^\alpha},\]
where $\gamma_{i,n}: = \prod_{j=i+1}^n (1-\frac{\delta}j)$, for all $i\le n$ (with the convention that $\gamma_{n,n}=1$). 
Recall that by definition $|\Delta M_3(i)|\le 1$, almost surely for all $i\ge 1$. Thus, by Doob's $L^2$-inequality, one has as $m\to +\infty$,  
\[\mathbb P\left(\sup_{n\ge m} \left|\sum_{i=m+1}^{n}
 \frac{\gamma_{i,n} \cdot\Delta M_3(i)}{
  i^{\alpha}}\right| \ge  \frac{1}{m^{2\alpha-1-\frac 35}}\right) = \mathcal O\Big(\frac 1{m^{6/5}}\Big).\]
By Borel-Cantelli, we deduce that almost surely, one has for all $m$ large enough,   
\[ 
\sup_{n\ge m}\left|\sum_{i=m+1}^{n} \frac{\gamma_{i,n} \cdot\Delta M_3(i)}{ i^{\alpha}}\right| \le \frac{1}{m^{2\alpha-1-\frac 35}}. \]
The lemma follows, since $2\alpha-1-\frac 35>0$, and for any fixed $m\ge 1$, $\gamma_{m,n} \to 0$, as $n\to \infty$.
\end{proof}


\subsection{Convergence of $\hat{{\bf W}}(n)$}\label{subsec:convW}
Our next goal is to prove the following proposition. 
\begin{proposition}\label{lem:conv}
Almost surely, there exists some (random) real $\chi \in [0,1]$, such that as $n\to \infty$, 
\[\frac{W_i(n)}{n} \to \chi ,\quad \forall i=1,2,\quad \text{and}\quad \frac{W_i(n)}{n} \to  1- \chi ,\quad \forall i=4,5.\]
\end{proposition}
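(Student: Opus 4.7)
The plan is to write $\hat W_1(n)$ as a stochastic approximation and exploit the polynomial decay of $\hat W_3(n)$ established in Lemma~\ref{lem:W3poldec}. By Lemma~\ref{lem:setE},
\[
\hat W_1(n+1) = \hat W_1(n) + \frac{F_1(\hat{{\bf W}}(n)) + \Delta M_1(n+1)}{n+3},
\]
with $F_1(w) := p_{12}(w) + p_{135}(w) - w_1$. The martingale series $\sum_k \Delta M_1(k+1)/(k+3)$ converges almost surely, being bounded in $L^2$ (as $|\Delta M_1(k+1)| \leq 1$). Therefore, proving convergence of $\hat W_1(n)$ reduces to proving that the drift series $\sum_k F_1(\hat{{\bf W}}(k))/(k+3)$ converges almost surely.

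The key point is that $F_1$ vanishes on $\mathcal E \cap \{w_3 = 0\}$. Indeed, on this slice the constraints defining $\mathcal E$ force $w_1 = w_2$ and $w_4 = w_5$, the losange degenerates into two disjoint two-edge parallel paths, and a direct effective-conductance calculation shows that each walker reinforces the top path with probability exactly $w_1$. Combined with the Lipschitz-type regularity of $F_1$ on $\mathcal E$, this should yield a bound $|F_1(w)| \leq C w_3$ for some constant $C$. Using Lemma~\ref{lem:W3poldec} with some $\alpha \in (5/6, 1)$ to write $\hat W_3(k) = o(k^{\alpha - 1})$ almost surely, one then concludes
\[
\sum_{k \geq 0} \frac{|F_1(\hat{{\bf W}}(k))|}{k+3} \leq C \sum_{k \geq 0} \frac{\hat W_3(k)}{k+3} < \infty \quad \text{a.s.,}
\]
whence $\hat W_1(n) \to \chi$ almost surely for some random $\chi \in [0,1]$.

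To transfer convergence to $\hat W_2(n)$, I would use the bookkeeping observation that edge~1 is reinforced exactly in cases $(i)$ and $(iii)$ of the enumeration preceding Lemma~\ref{lem:setE}, while edge~2 is reinforced exactly in cases $(i)$ and $(iv)$. Hence $W_1(n) - W_2(n)$ equals the number of $(iii)$-steps minus the number of $(iv)$-steps, so $|W_1(n) - W_2(n)| \leq W_3(n) - 1 = o(n)$ by Lemma~\ref{lem:W3poldec}. It follows that $\hat W_2(n) \to \chi$ as well, and the identities $\hat W_1 + \hat W_4 = \hat W_2 + \hat W_5 = 1$ valid on $\mathcal E$ then yield $\hat W_4(n), \hat W_5(n) \to 1 - \chi$.

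The main obstacle will be making the uniform bound $|F_1(w)| \leq C w_3$ rigorous on all of $\mathcal E$, particularly near the degenerate corners $(1,1,0,0,0)$ and $(0,0,0,1,1)$, where the rational expression for $p_{135}$ given by Lemma~\ref{lem:prob_zigzag} (and, by symmetry, those for $p_{234}$, $p_{12}$, $p_{45}$) exhibits removable $0/0$ singularities and naive Lipschitz estimates degenerate. Handling these corners cleanly will likely require a direct expansion of the reinforcement probabilities via effective conductances, in the spirit of the computation in the proof of Lemma~\ref{lem:p135p234}.
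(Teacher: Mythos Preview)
Your approach is essentially the same as the paper's: write one coordinate of $\hat{{\bf W}}(n)$ as a stochastic approximation, bound the drift by a multiple of $\hat W_3$, invoke the polynomial decay from Lemma~\ref{lem:W3poldec} to show the drift series converges, and then transfer convergence to the remaining coordinates via $|W_1-W_2|\le W_3$ and the constraints defining~$\mathcal E$. The martingale and transfer steps are exactly as in the paper.

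The one substantive difference is your choice of coordinate. The paper works with $\hat W_2$ rather than $\hat W_1$, and this is precisely what dissolves the obstacle you flag. The point is that edge~2 is reinforced if and only if the walk reaches $F$ through edge~2 (since exactly one of edges~2,~5 lies in the trace, namely the last edge crossed). Hence $p_{12}(w)+p_{234}(w)$ is a plain hitting probability, computed in closed form in Lemma~\ref{lem:prob_zigzag.2}, and a two-line algebraic manipulation (Lemma~\ref{lem:F2}) yields
\[
F_2(w)=\frac{(w_1-w_2)\,w_2w_5}{w_3+w_2w_5+w_1w_4},\qquad |F_2(w)|\le \tfrac{w_3}{2},
\]
uniformly on $\mathcal E$ including the corners. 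No Lipschitz argument is needed; the corners are handled by the observation that the denominator is at least $2w_2w_5$, which cancels the numerator.

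Your bound $|F_1(w)|\le C w_3$ is in fact true and does not require a separate delicate expansion: since $F_1-F_2=(p_{135}-p_{234})-(w_1-w_2)$, combining $p_{135}+p_{234}\le w_3$ (Lemma~\ref{lem:W3}) with $|w_1-w_2|\le w_3$ gives $|F_1-F_2|\le 2w_3$, whence $|F_1|\le \tfrac52 w_3$. So your route can be completed, but the cleanest way to do so still passes through the computation of $F_2$. Switching to the $\hat W_2$ coordinate from the outset is the shortcut that avoids the $0/0$ issues you anticipated.
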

We start with a computation giving the probability to reinforce edge~$2$, which is similar to Lemma~\ref{lem:prob_zigzag}. 
\begin{lemma}\label{lem:prob_zigzag.2}
One has for all $w\in \mathcal E$, 
\begin{linenomath}
\begin{align*}
p_{12}(w) + p_{234}(w)  =  \frac{w_2w_3 + w_1w_2w_5 + w_1w_2w_4}{w_3 + w_2w_5+w_1w_4}.    
\end{align*}
\end{linenomath}
\end{lemma}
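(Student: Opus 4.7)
The key observation is that, since every walker is killed upon first reaching $F$, the last step of the walk is through exactly one of the two edges incident to $F$, namely edge $2$ or edge $5$; moreover, neither edge $2$ nor edge $5$ can be crossed at any earlier time (crossing edge $2$ means either leaving $F$ or arriving at $F$, and the walker neither leaves $F$ nor arrives before its final step). Hence the trace contains edge $2$ if and only if the walker exits through edge $2$, and inspection of the four reinforcement scenarios $(i)$--$(iv)$ listed after Lemma~\ref{lem:setE} shows that edge~$2$ is reinforced precisely in cases $(i)$ and $(iv)$. Therefore
\[
p_{12}(w) + p_{234}(w) \;=\; \mathbb P\big(\text{the walker exits through edge } 2\big).
\]

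To compute this exit probability, I would perform first-step analysis. Let $a$, $b$, $c$ denote the probability that the walk exits through edge~$2$ when started at $N$, at the left vertex (incident to edges $1,2,3$), and at the right vertex (incident to edges $3,4,5$), respectively. Using $w_1+w_4=1$ (valid on $\mathcal E$), the transition probabilities give
\begin{align*}
a &= w_1 b + w_4 c,\\
(w_1+w_2+w_3)\,b &= w_1 a + w_2 + w_3 c,\\
(w_3+w_4+w_5)\,c &= w_4 a + w_3 b.
\end{align*}
The last two equations form a linear system in $(b,c)$ with determinant $D = (w_1+w_2+w_3)(w_3+w_4+w_5)-w_3^2$. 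Solving for $b$ and $c$ in terms of $a$ by Cramer's rule, plugging back into the first equation, and isolating $a$ expresses $a$ as an explicit rational function of the weights.

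The remaining task is algebraic simplification. Using the $\mathcal E$-identities $w_1+w_4=1$ and $w_2+w_5=1$ repeatedly, one checks that the denominator collapses to $w_3 + w_1 w_4 + w_2 w_5$, while the numerator becomes $w_2(w_3 + w_1 w_5 + w_1 w_4) = w_2 w_3 + w_1 w_2 w_5 + w_1 w_2 w_4$, matching the claimed formula. This simplification is the only mildly delicate step: the identity $w_1+w_4=1$ must be invoked at the right moment to collapse several cubic cancellations (in particular, terms of the form $w_i^2 w_j$ pair up via $w_1^2 w_4 + w_1 w_4^2 = w_1 w_4$). Equivalently, one can write $a = w_2\,V(\mathrm{left})$, where $V$ is the voltage obtained by injecting unit current at $N$ and grounding $F$; Kirchhoff's laws at the left and right vertices then produce the very same linear system, confirming the answer.
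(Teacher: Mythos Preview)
Your proof is correct. Both you and the paper begin with the same key observation---that $p_{12}(w)+p_{234}(w)$ is the probability that the walker exits to $F$ through edge~$2$---but the two computations diverge from there. The paper conditions on the \emph{first} step (into the left or the right vertex), then reduces each case to a two-point hitting probability via graph transformations (series/parallel reductions, illustrated by a figure), and exploits the left/right symmetry to obtain the second piece from the first. You instead write down the full harmonic-function system $a=w_1b+w_4c$, $(w_1+w_2+w_3)b=w_1a+w_2+w_3c$, $(w_3+w_4+w_5)c=w_4a+w_3b$ and solve it directly. This is a genuinely different route: your argument is entirely self-contained and needs no picture or network reduction, at the price of a slightly heavier algebraic simplification (the cancellations you mention, driven by $w_1+w_4=1$ and $w_2+w_5=1$). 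The paper's approach, by contrast, reuses the electrical-network machinery already set up for Lemma~\ref{lem:prob_zigzag} and keeps the algebra lighter. Your closing remark identifying $a=w_2\,V(\text{left})$ with $V$ the unit-current voltage is also correct (the current through the edge $\{\text{left},F\}$ equals the exit probability through that edge), and gives a third way to see the result.
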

\begin{proof}
Note that $p_{12}(w) + p_{234}(w)$ is equal to the probability that the last step before reaching the vertex $F$ is through  edge 2. Let us compute this probability by decomposing with respect to the first step, which is either through edge 1 (jumping on the left vertex), or through edge 4 (jumping on the right vertex), hence we will write
\begin{equation}\label{decompd}
p_{12}(w) + p_{234}(w)=p^{\ell}(w) + p^{r}(w).
\end{equation}
\begin{figure}
\begin{center}\includegraphics[width=12cm]{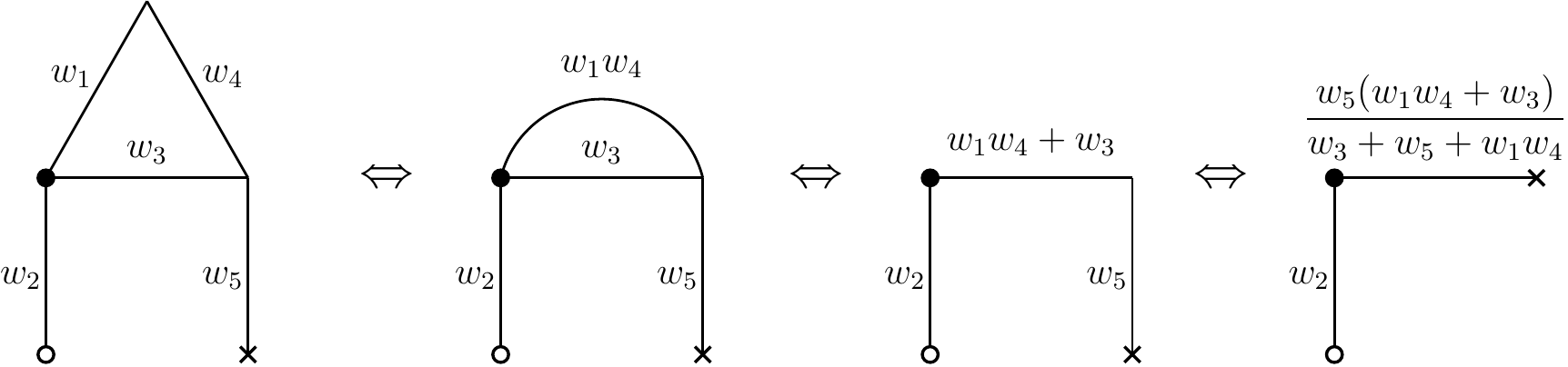}\end{center}
\caption{Calculation of $p^{\ell}(w)/w_1$ (i.e.\ the probability to reach the circled vertex before the crossed vertex starting from the black vertex) for $w\in\mathcal E$ (in particular, we use $w_1+w_4=1$).}
\label{fig:p2}
\end{figure}
For $w\in\mathcal E$, the probability to jump on the left vertex is $w_1$, and once on the left vertex, we need to compute the probability to cross edge 2 before crossing edge 5, which is easily done through graph transformations similar to those done in the proof of Lemma~\ref{lem:prob_zigzag}; see Figure~\ref{fig:p2}. One obtains:
\begin{linenomath}
\begin{align}
p^\ell(w)&=w_1\times \frac{w_2(w_3+w_5+w_1w_4)}{w_2(w_3+w_5+w_1w_4)+w_5(w_3+w_1w_4)}\\ \label{decompd1}
&=w_1\times \frac{w_2w_3+w_2w_5+w_1w_2w_4}{w_3+w_2w_5+w_1w_4},
\end{align}
\end{linenomath}
where we used that $w_2+w_5=1$.

Now, using symmetry, one has
\begin{linenomath}
\begin{align}
p^r(w)&=w_4\times\left(1- \frac{w_5w_3+w_2w_5+w_1w_5w_4}{w_3+w_2w_5+w_1w_4}\right)\\ \label{decompd2}
&=w_4\times \frac{w_2w_3+w_1w_2w_4}{w_3+w_2w_5+w_1w_4}.
\end{align}
\end{linenomath}
One can now easily conclude using \eqref{decompd}, by adding up \eqref{decompd1} with \eqref{decompd2} and using that $w_1+w_4=1$.
\end{proof}
We next deduce the following bound on $F_2(w)$ (the second coordinate of the function $F(w)$ from~\eqref{def.F}). 
\begin{lemma}\label{lem:F2}
For any $w \in \mathcal E$, we have
\[|F_2(w)| \le \frac{w_3}{2}.\]
\end{lemma}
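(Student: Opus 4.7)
The plan is to express $F_2(w)$ explicitly using Lemma~\ref{lem:prob_zigzag.2} and then exploit the constraints defining $\mathcal E$. By definition of $F$ (Equation~\eqref{def.F}), the second coordinate is $F_2(w)=p_{12}(w)+p_{234}(w)-w_2$. Plugging in the formula from Lemma~\ref{lem:prob_zigzag.2} and clearing denominators, the numerator becomes $w_2w_3+w_1w_2w_5+w_1w_2w_4 - w_2(w_3+w_2w_5+w_1w_4)=w_2w_5(w_1-w_2)$. So
\[
F_2(w) \;=\; \frac{w_2w_5\,(w_1-w_2)}{w_3+w_2w_5+w_1w_4}.
\]

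Next, I would use the defining relation $|w_1-w_2|\le w_3$ of $\mathcal E$ to bound the absolute value of the numerator by $w_2w_5\,w_3$, which reduces the problem to showing the purely geometric inequality
\[
\frac{w_2w_5}{w_3+w_2w_5+w_1w_4}\;\le\;\frac12,\qquad \text{i.e.}\qquad w_2w_5\le w_1w_4+w_3.
\]

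To prove this last inequality, the key step is an algebraic identity using $w_4=1-w_1$ and $w_5=1-w_2$ (both coming from the definition of $\mathcal E$). A short expansion gives
\[
w_2w_5-w_1w_4 \;=\; w_2(1-w_2)-w_1(1-w_1)\;=\;(w_2-w_1)\bigl(1-w_1-w_2\bigr).
\]
Since $w_1,w_2\in[0,1]$, the factor $1-w_1-w_2$ is bounded in absolute value by $1$, and the factor $w_2-w_1$ is bounded in absolute value by $w_3$, again by the definition of $\mathcal E$. Therefore $w_2w_5-w_1w_4\le w_3$, which is exactly what we needed. Combining with the bound on the numerator yields $|F_2(w)|\le w_3/2$, as desired.

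I do not expect any serious obstacle: the whole argument is a direct algebraic computation based on the exact formula of Lemma~\ref{lem:prob_zigzag.2}, followed by the elementary identity above. The only mildly subtle point is to notice that factoring $(w_2-w_1)(1-w_1-w_2)$ out of $w_2w_5-w_1w_4$ is precisely what allows the constraint $|w_1-w_2|\le w_3$ from $\mathcal E$ to enter effectively.
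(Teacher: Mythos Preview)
Your proof is correct and follows essentially the same route as the paper: you derive the explicit formula $F_2(w)=\frac{(w_1-w_2)w_2w_5}{w_3+w_2w_5+w_1w_4}$ from Lemma~\ref{lem:prob_zigzag.2}, then reduce to the denominator bound $w_2w_5\le w_3+w_1w_4$. The only difference is in this last step: the paper argues by cases on the sign of $w_1-w_2$ (using $w_4\ge w_5-w_3$), whereas your factorisation $w_2w_5-w_1w_4=(w_2-w_1)(1-w_1-w_2)$ handles both cases at once and is slightly cleaner.
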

\begin{proof}
By Lemma~\ref{lem:prob_zigzag.2}, for any $w\in \mathcal E$, 
\begin{equation}\label{eq.F2}
F_2(w) = p_{12}(w) +p_{234}(w)- w_2 = \frac{(w_1-w_2)w_2w_5}{w_3 + w_2w_5+w_1w_4}.
\end{equation}
Note now that since $w_1-w_2 = w_5-w_4$, either $w_1\ge w_2$, or $w_4\ge w_5$. 
In the first case, using also that $w_4 \ge w_5-w_3$, we deduce $w_3+w_1w_4\ge w_2w_5$. 
By symmetry, the same holds when $w_4\ge w_5$. 
We thus get
\[|F_2(w)|\le \frac{|w_1-w_2|}{2}\le \frac{w_3}{2},\quad \text{for all }w\in \mathcal E,\]
where we have used $|w_1-w_2|\le w_3$ in the second inequality.
\end{proof}

\begin{proof}[Proof of Proposition~\ref{lem:conv}]
Iterating Equation~\eqref{eq:algo_sto}, we get that, for all $n\geq 0$
\begin{equation}\label{hatW}
\hat{{\bf W}}(n) = \hat{{\bf W}}(0) + \sum_{i=0}^{n-1} \frac1{i+3}\big(F(\hat{{\bf W}}(i))+\Delta {\bs M}(i+1)\big). 
\end{equation}
where we recall that $\Delta \bs M(n+1):= Y(n+1)-\mathbb E_nY(n+1)$ with $Y(n+1) := {\bf W}(n+1) -{\bf W}(n)$, and where $F$ is defined in Equation~\eqref{def.F}.
By definition of the model, $\|Y(n+1)\|_1\leq 3$ almost surely, and thus $\|\Delta {\bs M}(i+1)\|_1\leq 3$ almost surely, which implies that the martingale 
\[\hat {\bs M}(n):=\sum_{i=0}^{n-1} \frac{\Delta {\bs M}(i+1)}{i+3}\]
is bounded in $L^2$ and thus converges almost surely when $n\to +\infty$.
By Lemma~\ref{lem:setE}, $\hat{{\bf W}}(n) \in \mathcal E$, for all $n\ge 0$.
Thus Lemma~\ref{lem:F2} gives $|F_2(\hat {{\bf W}}(n))|\leq \hat W_3(n)/2$, for all $n\ge 0$, which implies using Lemma~\ref{lem:W3poldec} that
\[\hat W_2(n) = \hat W_2(0) + \sum_{i=0}^{n-1}\frac{F_2(\hat{{\bf W}}(i))}{i+3} 
+ \sum_{i=0}^{n-1}\frac{\Delta M_2(i+1)}{i+3},\]
converges almost surely when $n\to+\infty$. The proposition follows, since by Lemma~\ref{lem:W3}, one has $\hat W_1(n) - \hat W_2(n)\to 0$, and by Lemma~\ref{lem:setE}, one has $\hat W_4(n) = 1- \hat W_1(n)$, and $\hat W_5(n) =1- \hat W_2(n)$, for all $n\ge 0$. 
\end{proof}


\subsection{On the absence of convergence to $0$ or $1$}\label{subsec:nondegenerate}
The last step of the proof is to exclude the convergence toward an extremal point, that is we prove the following proposition. 
\begin{proposition}\label{prop:nonconv} Almost surely, 
\[\lim_{n\to \infty} \frac{W_1(n)}{n} \notin \{0,1\}.\] 
\end{proposition}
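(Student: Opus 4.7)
The losange graph has an obvious left-right symmetry exchanging edges $1 \leftrightarrow 4$ and $2 \leftrightarrow 5$ (and fixing edge~$3$). Since the initial weights are uniform and the reinforcement rule is symmetric, one has $(W_1,W_2,W_3,W_4,W_5)\eqd(W_4,W_5,W_3,W_1,W_2)$ as processes, and therefore $\chi:=\lim_nW_1(n)/n\eqd 1-\chi$ by Proposition~\ref{lem:conv}. It is thus enough to prove that $\mathbb{P}(\chi=0)=0$. We argue by contradiction: assume $\mathbb{P}(\chi=0)>0$ and work on the event $\{\chi=0\}$, where Proposition~\ref{lem:conv} gives $\hat W_1(n),\hat W_2(n)\to 0$ and Lemma~\ref{lem:W3poldec} gives $\hat W_3(n)=O(n^{\alpha-1})$ for some $\alpha<1$. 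The whole strategy is then to couple the process on this event with a standard two-color P\'olya urn, whose limiting ratio is almost surely in $(0,1)$, thereby producing a contradiction.

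The coupling rests on the following approximate P\'olya structure. Let $N_{12}(n)$ and $N_{45}(n)$ denote the number of walks, among the first $n$, that reinforce the geodesics $\{1,2\}$ and $\{4,5\}$ respectively. From the classification of reinforced paths in Lemma~\ref{lem:setE} one immediately gets
\[W_1(n)+W_2(n)=1+2N_{12}(n)+W_3(n),\qquad N_{12}(n)+N_{45}(n)=n+1-W_3(n),\]
so that on $\{\chi=0\}$ we have $N_{12}(n)/n\to\chi=0$ and $N_{12}(n)+N_{45}(n)=n(1+o(1))$. At each step, $N_{12}$ (resp.~$N_{45}$) increases by one with probability $p_{12}(\hat W(n))$ (resp.~$p_{45}(\hat W(n))$), and neither increases with probability $p_{135}(\hat W(n))+p_{234}(\hat W(n))=O(\hat W_3(n))$, by Lemma~\ref{lem:W3}. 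A direct computation from Lemma~\ref{lem:prob_zigzag.2} (and its left-right symmetric analogue for $p_{45}+p_{135}$) shows that in the regime $\hat W_3\ll\hat W_1\approx\hat W_2$ one has
\[\frac{p_{12}(\hat W(n))}{p_{12}(\hat W(n))+p_{45}(\hat W(n))}=\frac{N_{12}(n)}{N_{12}(n)+N_{45}(n)}\bigl(1+o(1)\bigr),\]
which is precisely the reinforcement rule of a two-color P\'olya urn of initial composition $(1,1)$.

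To convert this approximate P\'olya behavior into a genuine lower bound, we appeal to Rubin's continuous-time embedding, exactly as in the proof of Proposition~\ref{prop:good_lowerbound}. Attaching independent exponential clocks to $N_{12}$ and $N_{45}$ with the above P\'olya-like rates, the quantitative estimate above lets us couple $(N_{12}(n),N_{45}(n))$, from some (random but almost surely finite) time on, with a genuine two-color P\'olya urn $(\tilde R_n,\tilde B_n)$ in such a way that $N_{12}(n)\ge\tilde R_n$ for all $n$ large enough, almost surely on $\{\chi=0\}$. Since $\tilde R_n/(\tilde R_n+\tilde B_n)$ converges almost surely to a $\mathrm{Beta}(1,1)$ random variable, in particular strictly in $(0,1)$, we obtain $\liminf_nN_{12}(n)/n>0$ almost surely on $\{\chi=0\}$, contradicting $N_{12}(n)/n\to\chi=0$.

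The main obstacle is to make the Pólya approximation precise enough to close the coupling: one needs a \emph{polynomial} rate of convergence for $|\hat W_1(n)-\hat W_2(n)|\to 0$ in order to control the $o(1)$-error above uniformly in $n$. Such a rate can be deduced from Lemma~\ref{lem:F2} (which bounds the drift of $\hat W_2$ by $\hat W_3/2$), the polynomial rate for $\hat W_3$ from Lemma~\ref{lem:W3poldec}, and a standard $L^2$-martingale estimate along the lines of the proof of Proposition~\ref{lem:conv}. This delicate quantitative urn coupling, reminiscent of the Pemantle--Volkov five-site analysis for vertex-reinforced random walks, is the technical heart of the argument and accounts for the comment in Section~\ref{sec:losange} that ruling out the extremal limits is the most subtle part of the proof.
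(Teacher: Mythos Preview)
Your P\'olya-coupling strategy has a circularity that is more serious than the obstacle you flag. The multiplicative estimate
\[\frac{p_{12}}{p_{12}+p_{45}}=\frac{N_{12}}{N_{12}+N_{45}}\,(1+o(1))\]
holds only in the regime $\hat W_3\ll\hat W_1$, but on $\{\chi=0\}$ nothing rules out $\hat W_3$ being of the same order as $\hat W_1$ (one always has $\hat W_3\le\hat W_1+\hat W_2$ in $\mathcal E$, and equality is allowed). A direct computation from Lemmas~\ref{lem:prob_zigzag.2} and~\ref{lem:F2} yields only the \emph{additive} bound $p_{12}=\hat W_2+O(\hat W_3)$; combined with $N_{12}=W_2-1-N_{234}$ and $N_{234}\le W_3-1$, this gives at best $p_{12}\ge (N_{12}+1)/(n+2)-C\hat W_3$. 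For a coupling $N_{12}(n)\ge R_n$ with a genuine P\'olya urn $R$ to propagate, one needs this error to be $o(N_{12}/n)$, i.e.\ $W_3(n)=o(N_{12}(n))$. But on $\{\chi=0\}$ you only know $W_3(n)=O(n^{5/6+\varepsilon})$ and $N_{12}(n)=o(n)$, and nothing prevents $N_{12}(n)=O(n^{5/6})$ as well; in that case the perturbed urn can stall (whenever $R_n<Cn^{5/6}$ the ``increment probability'' $R_n/(n+2)-Cn^{-1/6}$ is non-positive) and the coupling breaks. The obstacle you name---a polynomial rate for $|\hat W_1-\hat W_2|$---is automatic from $|\hat W_1-\hat W_2|\le\hat W_3$; the real obstacle is the relative size of $\hat W_3$ and $\hat W_1$, which is essentially what you are trying to control.

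The paper's proof avoids this circularity by \emph{not} trying to lower-bound the minority geodesic. Working (by symmetry) on $\{\chi=1\}$, so that $\hat W_4,\hat W_5\to 0$, it first transfers the polynomial rate of $\hat W_3$ to $\hat W_5$ via a separate urn coupling (Lemma~\ref{lem:W5poldec}), and then runs a bootstrap (Lemmas~\ref{lem:seedW5}, \ref{lem:F5-F4}, \ref{lem:polWi}) showing that $W_i(n)=O(n^\alpha)$ for $i=3,5$ forces the same bound with any exponent $\beta>\max(2\alpha-1,\alpha/2)$; iterating gives $W_3(n)=O(n^\varepsilon)$ for every $\varepsilon>0$ (Corollary~\ref{cor:final}). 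The contradiction then comes from a \emph{lower} bound on $W_3$: still on $\{\chi=1\}$, one shows $p_{135}+p_{234}\ge \hat W_3/5$ for large $n$, so $W_3$ dominates a Friedman-type urn and $W_3(n)/n^c\to\infty$ for every $c<1/5$ (Lemma~\ref{lem:finalstep}). The inconsistency is thus entirely internal to the decay rate of $W_3$, and the argument never needs to compare $\hat W_3$ to the minority weights.
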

Note that by symmetry it suffices to exclude the possibility of converging to $1$. We prove this by contradiction, 
and start with the following fact. 

\begin{lemma}\label{lem:W5poldec}
For all $\alpha\in(0,1)$, on the event where 
\[\lim_{n\to \infty} \frac{W_1(n)}{n} = 1,\quad \text{and} \quad \lim_{n\to +\infty} \frac{W_3(n)}{n^{\alpha}} = 0,\]
both hold, we have almost surely for any $\beta>\alpha$, 
\[\lim_{n\to +\infty} \frac{W_5(n)}{n^\beta} = 0.\]
\end{lemma}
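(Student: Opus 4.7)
The plan is to follow the blueprint of the argument for $\hat W_2(n)$ in the proof of Proposition~\ref{lem:conv}: I would first derive an analog of Lemma~\ref{lem:F2} bounding the fifth coordinate of the drift $F$, and then use the stochastic approximation~\eqref{eq:algo_sto} together with the fact that $\hat W_5(n)\to 0$ on the given event to represent $\hat W_5(n)$ as a tail remainder driven by $W_3$.

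The losange graph has a left-right symmetry exchanging edges $1\leftrightarrow 4$ and $2\leftrightarrow 5$ while fixing edge $3$. This swaps cases (i) with (ii) and (iii) with (iv), so that $p_{45}(w_1,\dots,w_5)=p_{12}(w_4,w_5,w_3,w_1,w_2)$ and similarly $p_{135}\leftrightarrow p_{234}$. Applying Lemma~\ref{lem:prob_zigzag.2} with swapped arguments gives
\[F_5(w)=p_{45}(w)+p_{135}(w)-w_5=\frac{(w_4-w_5)\,w_2\,w_5}{w_3+w_1w_4+w_2w_5},\]
and the case analysis of Lemma~\ref{lem:F2} then yields $|F_5(w)|\le w_3/2$ for every $w\in\mathcal E$ (using $|w_4-w_5|\le w_3$ and $w_3+w_1w_4\ge w_2w_5$, which follows from $w_3+w_1w_4-w_2w_5=w_3+(w_4-w_2)(w_1-w_2)\ge 0$).

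Next, I would sum the fifth coordinate of~\eqref{eq:algo_sto} to obtain
\[\hat W_5(n)-\hat W_5(0)=\sum_{i=0}^{n-1}\frac{F_5(\hat{{\bf W}}(i))}{i+3}+\widetilde M_n,\qquad \widetilde M_n:=\sum_{i=0}^{n-1}\frac{\Delta M_5(i+1)}{i+3},\]
where $\widetilde M_n$ is $L^2$-bounded (since $|\Delta M_5|\le 3$) and hence converges a.s.\ to some $\widetilde M_\infty$. On the event $A$ considered in the lemma, Proposition~\ref{lem:conv} forces $\hat W_5(n)\to 0$; moreover, fixing any $\alpha'\in(\alpha,\beta)$, one has $\hat W_3(i)\le i^{\alpha'-1}$ eventually, so $\sum |F_5|/(i+3)$ converges absolutely. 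Subtracting the limiting identity from the one at time $n$ then yields
\[\hat W_5(n)=-\sum_{i\ge n}\frac{F_5(\hat{{\bf W}}(i))}{i+3}-(\widetilde M_\infty-\widetilde M_n).\]

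Finally, I would bound the two terms on the right: the deterministic tail by $C\sum_{i\ge n}i^{\alpha'-2}\le C'n^{\alpha'-1}$, and the martingale tail via Kolmogorov's $L^2$ maximal inequality and Borel--Cantelli along dyadic scales, yielding a.s.\ $|\widetilde M_\infty-\widetilde M_n|=O(n^{-1/2+\varepsilon})$ for any $\varepsilon>0$. Combining,
\[\frac{W_5(n)}{n^\beta}=\frac{(n+2)\,\hat W_5(n)}{n^\beta}=O\!\left(n^{\alpha'-\beta}+n^{1/2-\beta+\varepsilon}\right),\]
which tends to $0$ as soon as $\beta>\alpha$ and $\beta>1/2$. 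In the intended application (Proposition~\ref{prop:nonconv}), Lemma~\ref{lem:W3poldec} ensures $\alpha>5/6$, so the condition $\beta>1/2$ is automatic. The only technical obstacle is precisely this martingale tail: the direct $L^2$ bound is tight at rate $n^{-1/2+\varepsilon}$, so for $\alpha\in(0,1/2)$ and $\beta$ close to $\alpha$, one would need a short bootstrap---iteratively using $\hat W_5(i)=O(i^{\gamma-1})$ to sharpen the conditional variance of $\Delta M_5(i+1)$ from $O(1)$ to $O(\hat W_5(i)+\hat W_3(i))$, and hence the tail of $\widetilde M_n$, driving $\gamma$ down toward $\alpha$.
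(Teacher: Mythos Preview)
Your approach is correct and genuinely different from the paper's. The paper argues by contradiction: assuming $\limsup W_5(n)/n^\beta>1$, it couples $(W_2(n))_{n\ge m}$ with an auxiliary urn-type process $(R_n)_{n\ge m}$ having transitions $q_n=(R_n+R_n^\gamma)/(n+R_n^\gamma)$, and uses Rubin's continuous-time embedding to show that $\limsup R_n/n<1$ with high conditional probability; since $W_2(n)\le R_n$ under the coupling on the relevant event, this contradicts $W_2(n)/n\to 1$. You instead work directly with the stochastic approximation for $\hat W_5$, using only the symmetric analogue $|F_5(w)|\le w_3/2$ of Lemma~\ref{lem:F2} (your derivation of which is correct), the a~priori convergence $\hat W_5(n)\to 0$ on the given event (which indeed follows, either from Proposition~\ref{lem:conv} or more directly from $|w_4-w_5|\le w_3$ and $w_4=1-w_1$), and an $L^2$ tail bound on the martingale. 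Your route is considerably shorter and avoids both the coupling and Rubin's construction entirely; the price is that the martingale estimate is cruder.

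The gap you flag is real but repairable exactly as you describe. The crude variance bound only yields a martingale tail $O(n^{-1/2+\varepsilon})$, so the direct argument proves the lemma for $\beta>\max(\alpha,\tfrac12)$; this already covers the only application in the paper (Corollary~\ref{cor:final}, where $\alpha>5/6$ via Lemma~\ref{lem:W3poldec}). For the full statement with $\alpha<\tfrac12$, your bootstrap is sound: since $Y_5\in\{0,1\}$, one has $\mathbb E_n[(\Delta M_5(n+1))^2]\le p_{45}+p_{135}=\hat W_5(n)+F_5(\hat{{\bf W}}(n))\le \hat W_5(n)+\tfrac12\hat W_3(n)$, and feeding a bound $\hat W_5=O(n^{\gamma-1})$ back into the martingale tail (e.g.\ via the event-dependent estimate proved later as Lemma~\ref{lem:martingale}(ii), with $a=1$ and $c=1-\max(\gamma,\alpha')$) roughly halves the exponent~$\gamma$ at each step until it drops below any chosen $\alpha'>\alpha$. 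A minor point: $Y_5\in\{0,1\}$ gives $|\Delta M_5|\le 1$, not $3$, but this is immaterial.
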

\begin{proof}
Fix $\alpha \in (0,1)$ and 
assume that both $W_1(n)/n \to 1$ and $W_3(n) / n^\alpha \to 0$ when $n\to+\infty$. 
Assume by contradiction that there exists $\beta>\alpha$, such that  
$\limsup_{n\to+\infty} W_5(n)/n^\beta >0$. 
Without loss of generality one can even assume that $\limsup_{n\to+\infty} W_5(n)/n^\beta >1$, by taking a smaller $\beta$ if necessary. 
In other words, letting 
\[E:= \left\{\lim_{n\to \infty} \frac{W_3(n)}{n^\alpha} = 0, \ \text{ and }\lim_{n\to \infty} \frac{W_1(n)}{n} = 1\right\},\quad \text{and} \quad 
E':=E \cap \left\{\limsup_{n\to+\infty} \frac{W_5(n)}{n^\beta} >1\right\},\]
our aim is to show that $\P(E') = 0$.

For $m\ge 1$ integer, define 
\[E_m := \left\{W_3(n)\le n^\alpha, \ \text{and}\ W_2(n)\ge 3(n+2)/4  \quad \text{for all}\ n\ge m\right\}. \]
By definition, and using that $W_2(m) \ge W_1(m) - W_3(m)$, for all $m\ge 0$, one has {that $E\subset \cup_mE_m$, and therefore}
\[\lim_{m\to \infty} \P(E\cap E_m^c) = 0.\]
Thus it amounts to show that 
\[
\lim_{m\to \infty} \P(E_m \cap E') = 0. 
\]
Note now that by conditioning with respect to the first time $n\ge m$ when $W_5(n) \ge n^\beta$, it suffices in fact to show that almost surely 
\begin{equation}\label{goal:coupling}
\lim_{m\to \infty} \P(E_m \cap E \mid \mathcal F_m) \cdot \mathbf{1}\{W_5(m) \ge m^\beta\} = 0, 
\end{equation}
where $\mathcal F_m = \sigma({\bf W}(0),\dots,{\bf W}(m))$. Thus the rest of the proof consists in proving \eqref{goal:coupling}. 
The idea is to show that for any integer $m\ge 1$, on the event that $\{W_5(m)\ge m^\beta\}$, 
the process $(W_2(n))_{n\ge m}$ can be coupled with another process $(R_n)_{n\ge m}$, 
in a way that outside an event with vanishing probability as $m\to \infty$, one has  
$W_2(n) \le R_n$ for all $n\ge m$, and $\limsup_{n\to \infty} R_n/n <1$, from which \eqref{goal:coupling} follows.

We proceed with the details now. Fix $\gamma\in (0,1)$, such that $1+\alpha<\beta + \gamma$. 
Let $m\ge 1$ be given, and conditionally on $\mathcal F_m$, we define the process $(R_n)_{n\geq m}$ as follows:
$R_m = W_2(m)$, and for all $n\geq m$,
\begin{equation}\label{eq:qn}
q_n:=\P(R_{n+1} = R_n+1\mid \G_n) 
= 1- \P(R_{n+1} = R_n\mid \G_n) 
= \frac{R_n + R_n^\gamma}{n+R_n^{\gamma}},
\end{equation}
where $\G_n=\mathcal F_m\vee \sigma(R_m,\dots,R_n)$.

$\bullet$ First, we prove that, for all $m\geq 1$, if
we set
\begin{equation}\label{eq:defA}
\mathcal A_m:=
\bigg\{\inf_{n\ge m} \frac{R_n}{n} >\frac {3}{5}\bigg\} \cap 
\Big\{\inf_{n\ge m} \frac{n-R_n}n >  \frac3{5m^{1-\beta}}\Big\},
\end{equation}
then almost surely on the event $\{W_5(m) \ge m^\beta\}\cap \{W_2(m) \ge 3(m+2)/4\}$, one has 
\begin{equation}\label{RnRubin}
\mathbb P(\mathcal A_m^c\mid \mathcal F_m)  = \mathcal O(m^{-\delta}),
\end{equation}
where the implicit constant in the $\mathcal O$ is deterministic, and $\delta = \delta(\beta,\gamma)$ is some positive constant depending only on $\beta$ and $\gamma$.
\begin{figure}
\begin{center}
\includegraphics[width=14cm]{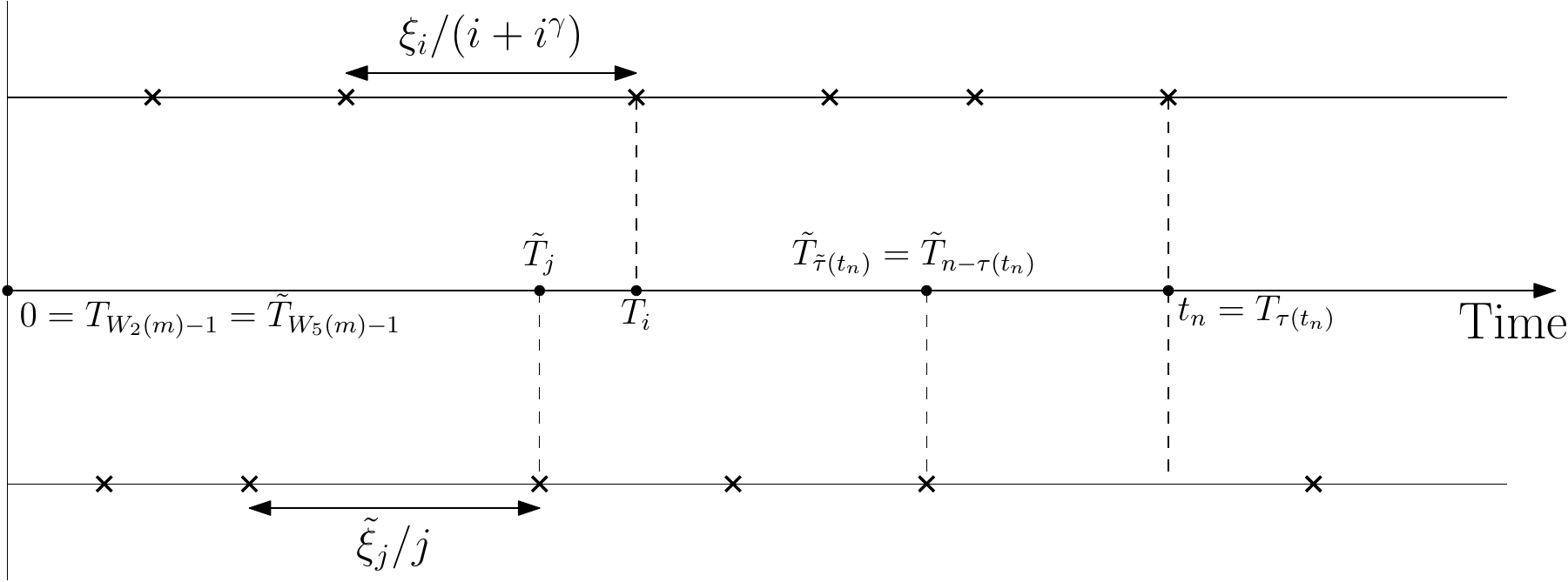}
\end{center}
\caption{Rubin's construction for the proof of Lemma~\ref{lem:W5poldec}.}
\label{fig:rubin.2}
\end{figure}
To do this, we use again Rubin's construction; see Figure~\ref{fig:rubin.2}:
Let $(\xi_i)_{i\ge 1}$ and $(\tilde \xi_i)_{i\ge 1}$ 
be two independent sequences of independent exponential random variables with mean~$1$ 
(also independent of the process $(W_2(n))_{n\ge 1})$. 
For all $m\ge 0$ and  $i\ge 0$, set
\[T_i := \sum_{j=W_2(m)}^i \frac{\xi_j}{j + j^\gamma}, \quad \text{and} \quad 
\tilde T_i := \sum_{j=W_5(m)}^i \frac{\tilde \xi_j}{j},\]
with the convention that $T_i =0$ for $i< W_2(m)$, and  $\tilde T_i = 0$, for $i< W_5(m)$.
For all  $t\ge 0$, set
\[\tau(t) := \sup \{i\ge 0 : T_i\le t\},\quad \text{and}\quad \tilde 
\tau(t) := \sup \{i\ge 0 : \tilde T_i\le t\},\]
and for all $n\ge m$, 
\[t_n:= \inf\{t\ge m : \tau(t) + \tilde \tau(t) \ge n\}.\]
Standard properties of independent exponential random variables imply 
that $(\tau(t_n))_{n\ge m}$ and $(R_n)_{n\ge m}$ have the same law.
Note that for all $m\geq 1$, and $i\ge W_2(m)$,
\begin{equation}\label{eq:Ti}
T_i  = M_i + \log \Big(\frac{i}{W_2(m)}\Big) 
+ \mathcal O\Big(\frac 1{W_2(m)^{1-\gamma}}\Big), \quad \text{with}\quad  M_i :=\sum_{j=W_2(m)}^i \frac{\xi_j-1}{j + j^\gamma},
\end{equation}
when $m\to+\infty$,
and for all $i\ge W_5(m)$,
\begin{equation}\label{eq:tildeTi}
\tilde T_i  = \tilde M_i + \log \Big(\frac{i}{W_5(m)}\Big) + \mathcal O\Big(\frac 1{W_5(m)}\Big), 
\quad \text{with}\quad  \tilde M_i :=\sum_{j=W_5(m)}^i \frac{\tilde \xi_j-1}{j},
\end{equation}
when $m\to+\infty$.
By Doob's $L^2$-maximal inequality, we get that, almost surely 
\begin{equation}\label{eq:mart1}
\mathbb P\bigg(\sup_{i\ge W_2(m)} |M_i| > \frac 1{W_2(m)^{\nicefrac14}}\ \Big|\ W_2(m) \bigg) 
\le 4W_2(m)^{\nicefrac12} \sum_{i\ge W_2(m)} \frac 1{i^2} 
= \mathcal O\Big(\frac 1{W_2(m)^{\nicefrac12}}\Big),
\end{equation}
when $m\to+\infty$,
and similarly,
\begin{equation}\label{eq:mart2}
\mathbb P\bigg(\sup_{i\ge W_5(m)} |\tilde M_i| 
> \frac 1{W_5(m)^{\nicefrac14}}\ \Big|\ W_5(m)\bigg) 
= \mathcal O\Big(\frac 1{W_5(m)^{\nicefrac12}}\Big),
\end{equation}
when $m\to+\infty$.
Moreover, by definition 
\begin{equation}\label{eq:TT}
T_{\tau(t_n)} = \tilde T_{n- \tau(t_n)}  + \mathcal O(\Gamma_m + \tilde \Gamma_m),
\end{equation}
with 
\[\Gamma_m = \sup_{j\ge W_2(m)} \xi_j/j, \quad \text{and}\quad 
\tilde \Gamma_m = \sup_{j\ge W_5(m) }\tilde \xi_j / j.\]
Note that, for all $m$ large enough, 
\begin{equation}\label{eq:gammas}
\mathbb P\Big(\Gamma_m>\frac 1{W_2(m)^{\nicefrac12}}\mid W_2(m) \Big)\le 
\exp\Big(-\frac{\sqrt{W_2(m)}}2\Big) 
\text{ and }
\mathbb P\Big(\tilde \Gamma_m>\frac 1{W_5(m)^{\nicefrac12}}\mid W_5(m)\Big)
\le \exp\Big(-\frac{\sqrt{W_5(m)}}2\Big).
\end{equation}
Taking the exponential in Equation~\eqref{eq:TT} gives
\begin{equation}\label{eq:exp_eps}
\frac{\tau(t_n)}{n-\tau(t_n)} \cdot \frac{W_5(m)}{W_2(m)}
=\exp(\varepsilon_n),
\end{equation}
where, by Equations~\eqref{eq:Ti}, \eqref{eq:tildeTi}, \eqref{eq:mart1}, \eqref{eq:mart2} and~\eqref{eq:gammas}, there exists $\delta=\delta(\beta,\gamma)>0$ such that almost surely on the event $\{W_5(m) \ge m^\beta\} \cap \{W_2(m) \ge 3(m+2)/4\}$, 
\[\mathbb P(\sup_{n\ge m} |\varepsilon_n|> m^{-\delta} \mid \mathcal F_m) = \mathcal O(m^{-\delta}).\]
Since, by Lemma~\ref{lem:setE}, $W_5(m) = m+2-W_2(m)$, we get that on the event $\{W_2(m)\geq 3(m+2)/4\}$, one has 
$W_5(m)/W_2(m)\leq \nicefrac13$, and thus, by Equation~\eqref{eq:exp_eps},
\begin{equation}\label{eq:borneinf}
\tau(t_n)\geq (n-\tau(t_n)) 3\mathrm e^{\varepsilon_n}
\quad\Longrightarrow\quad 
\tau(t_n) \geq \frac{3\mathrm e^{\varepsilon_n} n}{1+3\mathrm e^{\varepsilon_n}}
= \frac{3n(1-\mathcal O(m^{-\delta}))}{4},
\end{equation}
where the last equality holds on an event of probability at least $1-\mathcal O(m^{-\delta})$ when $m\to+\infty$.
Similarly, on the event $\{W_5(m)\geq m^{\beta}\}$, we have $W_5(m)/W_2(m)\geq m^{\beta-1}$, and thus, by Equation~\eqref{eq:exp_eps},
\[(n-\tau(t_n)) m^{1-\beta} \mathrm e^{\varepsilon_n}\geq \tau(t_n)
\geq  \frac{3n(1-\mathcal O(m^{-\delta}))}{4},\]
where the last inequality comes from Equation~\eqref{eq:borneinf}. 
This implies
\[n-\tau(t_n)\geq \frac{3n}{4m^{1-\beta}} (1-\mathcal O(m^{-\delta})),\]
when $m\to+\infty$, on an event of probability at least $1-\mathcal O(m^{-\delta})$.
Since $(R_n)_{n\ge m}$ and $(\tau(t_n))_{n\ge m}$ have the same law by construction, this concludes the proof of~\eqref{RnRubin}.

$\bullet$ To conclude we just need to show that 
there exists a coupling of $(W_2(n))_{n\geq m}$ and $(R_n)_{n\geq m}$, 
such that almost surely on the event $\mathcal A_m\cap E_m$ (see~Equation~\eqref{eq:defA} for the definition of $\mathcal A_m$), 
one has $W_2(n)\leq R_n$ for all $n\geq m$, at least for $m$ large enough. Indeed, this would 
prove that on $\mathcal A_m\cap E_m$, the sequence $(W_1(n)/n)_{n\ge m}$ cannot converge to $1$, or otherwise stated that for all $m$ large enough, almost surely,  
\[\P(\mathcal A_m \cap E_m \cap E\mid \mathcal F_m)=0.\] 
Together with \eqref{RnRubin}, this would conclude the proof of~\eqref{goal:coupling}. So let us prove the existence of the desired coupling now.

Recall that, by Lemma~\ref{lem:F2}, for all $n\ge m$, on the event $\{W_3(n) \le n^\alpha\}$, 
\[p_n:=\P(W_2(n+1) = W_2(n) + 1\mid \mathcal F_n)\le \frac{W_2(n) + n^\alpha}{n+2}. \]
To show that our coupling exists, it is enough to prove that, for all $n\geq m$, 
if $W_2(n)\leq R_n$, then $p_n\leq q_n$, where $q_n$ is defined in Equation~\eqref{eq:qn}. {Indeed, if $W_2(n)\leq R_n$ and $p_n\leq q_n$, then there exists a one-step coupling such that $W_2(n+1)\leq R_{n+1}$, and we can proceed by induction.}
Note that $q_n\geq p_n$ is implied by
\[(n+2) (R_n + R_n^\gamma) \ge  (n+R_n^\gamma)(W_2(n) + n^{\alpha}).\]
{Developing and u}sing the induction hypothesis (i.e.\ $W_2(n) \le R_n$), 
it suffices to show that
\[R_n^\gamma(n-R_n-n^\alpha) \ge n^{1+\alpha},\]
which is indeed true on $\mathcal A_m$, since on this event  
\[R_n^\gamma (n-R_n - n^\alpha) \ge \frac 35 n^\gamma\Big(\frac 35 n^\beta - n^\alpha\Big),\] 
which is well larger than $n^{1+\alpha}$, for all $n$ large enough, since by hypothesis {$\gamma\in(0,1)$ and} $\gamma + \beta > 1+ \alpha$.
This concludes the proof of~\eqref{goal:coupling}, and of the lemma. 
\end{proof}

\begin{lemma}\label{lem:seedW5}
Let $\alpha \in (0,1)$ be given. 
On the event
\[\mathcal A(\alpha) = \big\{W_i(n) = \mathcal O(n^{\alpha}), \text{ for }i=3,5\big\}
\]
one has almost surely for any $\beta>\max(2\alpha -1,\frac \alpha 2)$, 
\[W_4(n) + W_5(n) \le W_3(n)+\mathcal O(n^{\beta} ).\]
\end{lemma}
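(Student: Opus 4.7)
The plan is to reduce the inequality to a sharp bound on the number $b_n$ of walks that reinforce $\{4,5\}$ via a combinatorial identity, and then to control $b_n$ by a martingale decomposition combined with tight estimates on the reinforcement probability $p_{45}$.

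Let $a_n, b_n, c_n, d_n$ count, among the first $n$ walks, the numbers reinforcing respectively $\{1,2\}$, $\{4,5\}$, $\{1,3,5\}$, $\{2,3,4\}$. Reading off the contribution of each type to each edge, $W_3(n) = 1+c_n+d_n$, $W_4(n) = 1+b_n+d_n$, $W_5(n) = 1+b_n+c_n$, so that
\[W_4(n)+W_5(n)-W_3(n) \;=\; 1 + 2 b_n.\]
The lemma thus reduces to showing $b_n = \mathcal O(n^\beta)$ on $\mathcal A(\alpha)$ for every $\beta > \max(2\alpha-1,\alpha/2)$. Moreover, the identity together with $b_n \le W_5-1$ yields $W_4 \le W_3+W_5-1$, so on $\mathcal A(\alpha)$ one has uniformly $W_3(k), W_4(k), W_5(k) = \mathcal O(k^\alpha)$ for all $k\ge 1$.

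Starting from the formula for $p_{135}$ in Lemma~\ref{lem:prob_zigzag} and from the left--right symmetric counterpart of Lemma~\ref{lem:prob_zigzag.2}, which gives a closed form for $p_{45}+p_{135}=\mathbb{P}(\text{last step via edge }5)$, $p_{45}$ is obtained by subtraction; the $w_3 w_5$ contribution in the numerator cancels, yielding the uniform estimate
\[p_{45}(w) \;\le\; C_1\,\frac{w_4\, w_5}{w_3+w_4+w_5}, \qquad w\in \mathcal E,\]
for some absolute constant $C_1$. Decompose $b_n = A_n + M_n$ with compensator $A_n = \sum_{k=0}^{n-1} p_{45}(\hat{{\bf W}}(k))$ and associated martingale $M_n$ (whose increments lie in $[-1,1]$). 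On $\mathcal A(\alpha)$, the AM--GM inequality $w_3+w_4+w_5 \ge 2\sqrt{w_4 w_5}$ combined with $W_i = \mathcal O(k^\alpha)$ gives the crude pointwise bound $p_{45}(\hat{{\bf W}}(k)) \le C_2\, k^{\alpha-1}$, and hence $A_n = \mathcal O(n^\alpha)$. A sharper estimate, obtained by a case analysis according to which of $w_3, w_4, w_5$ dominates the denominator and extracting in each regime the additional factor of order $k^{\alpha-1}$ that can be taken out of the denominator, improves this to $A_n = \mathcal O(n^{2\alpha-1})$ when $\alpha > 1/2$.

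The martingale $M_n$ has quadratic variation $[M]_n \le A_n = \mathcal O(n^\alpha)$ on $\mathcal A(\alpha)$, so Freedman's inequality applied on a dyadic grid, combined with the Borel--Cantelli lemma, yields the almost-sure fluctuation bound $|M_n| = \mathcal O(n^{\alpha/2+\varepsilon})$ for every $\varepsilon>0$. Combining the two estimates, $b_n = A_n + M_n = \mathcal O(n^{\max(2\alpha-1,\alpha/2)+\varepsilon})$ on $\mathcal A(\alpha)$, which via the identity of the second paragraph is exactly the claim of the lemma. The main technical hurdle is the refined compensator bound $A_n = \mathcal O(n^{2\alpha-1})$ for $\alpha>1/2$: the naive pointwise bound $p_{45}(\hat{{\bf W}}(k)) \lesssim \sqrt{w_4(k) w_5(k)}$ yields only $A_n = \mathcal O(n^\alpha)$, and extracting the needed extra factor of size $k^{\alpha-1}$ from the denominator in each regime requires a delicate case-by-case analysis of the relative sizes of $w_3,w_4,w_5$.
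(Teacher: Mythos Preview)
Your combinatorial identity $W_4+W_5-W_3 = 1+2b_n$ is correct and is in fact exactly the quantity $U(n)$ that the paper tracks. The reduction to bounding $b_n$, the asymptotic $p_{45}(w)\sim w_4w_5/(w_3+w_4+w_5)$ for small $w_3,w_4,w_5$, and the martingale estimate $M_n=\mathcal O(n^{\alpha/2+\varepsilon})$ via Freedman are all fine. The gap is entirely in the ``refined compensator bound'' $A_n=\mathcal O(n^{2\alpha-1})$.

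This bound cannot be obtained by any pointwise estimate on $p_{45}(\hat{\bf W}(k))$ under the sole hypothesis $\hat W_i(k)=\mathcal O(k^{\alpha-1})$ for $i=3,4,5$. Indeed, the configuration $\hat W_3(k)\asymp\hat W_4(k)\asymp\hat W_5(k)\asymp k^{\alpha-1}$ is perfectly compatible with the constraints of $\mathcal E$ and with $\mathcal A(\alpha)$, and at such a point $p_{45}\asymp k^{\alpha-1}$, not $k^{2\alpha-2}$. No ``case analysis according to which of $w_3,w_4,w_5$ dominates'' can help, because in the balanced regime none dominates and the denominator $w_3+w_4+w_5$ is genuinely of order $k^{\alpha-1}$, not a constant. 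So your proposed argument yields only $A_n=\mathcal O(n^\alpha)$, hence only $b_n=\mathcal O(n^\alpha)$, which is no improvement over the trivial bound $b_n\le W_5(n)-1$.

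What the paper exploits, and what your decomposition discards, is a cancellation. Writing $\hat U(n)=(1+2b_n)/(n+2)$, the drift of $\hat U$ is $G(w)=F_4+F_5-F_3=2p_{45}(w)-(w_4+w_5-w_3)=2p_{45}(w)-\hat U$. The point of Lemma~\ref{lem:F4F5F3} is that this \emph{difference} satisfies the one-sided bound $G\ge -8(w_3^2+w_5^2)=-\mathcal O(k^{2\alpha-2})$, even though $p_{45}$ itself is only $\mathcal O(k^{\alpha-1})$. The paper then writes $\hat U(m)=\hat U(n)-\sum_{i=m}^{n-1}G(\hat{\bf W}(i))/(i+3)-\text{(martingale)}$, lets $n\to\infty$ (using $\hat U(n)\to 0$ on $\mathcal A(\alpha)$), and bounds $\hat U(m)$ from above using only the \emph{lower} bound on $G$. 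In your variables this amounts to saying that the large positive part of $p_{45}$ is absorbed by the growth of $b_k$ itself; separating $b_n$ into $A_n+M_n$ destroys precisely this self-cancellation. To repair your argument you would need to control not $\sum_k p_{45}$ but $\sum_k\big(p_{45}(\hat{\bf W}(k))-\tfrac{b_k}{k+2}\big)$, which is essentially the paper's route.
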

For the proof of this lemma we need some intermediate results. The first one gives 
a lower bound on $F_4 + F_5-F_3$.  

\begin{lemma}\label{lem:F4F5F3}
For all $w\in \mathcal E$, such that $w_5\le \nicefrac 12$, we have 
\[F_4(w) + F_5(w) - F_3(w) \ge -8(w_3^2+w_5^2). \]
\end{lemma}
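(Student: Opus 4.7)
The plan is to derive an explicit expression for $F_4+F_5-F_3$ that exhibits a crucial cancellation, and then to bound what remains. First, the definition~\eqref{def.F} of $F$ gives immediately
\[F_4(w)+F_5(w)-F_3(w) = 2p_{45}(w) - w_4 - w_5 + w_3,\]
so the task reduces to analysing $2p_{45}-w_4-w_5+w_3$. To evaluate $p_{45}$, I would write $p_{45}=A-p_{135}$, where $A := p_{45}+p_{135}$ is the probability that the walker reaches $F$ through edge~$5$. The symmetric counterpart of Lemma~\ref{lem:prob_zigzag.2}, obtained by swapping the edges $1\leftrightarrow 4$ and $2\leftrightarrow 5$ (which interchanges $p_{12}\leftrightarrow p_{45}$ and $p_{234}\leftrightarrow p_{135}$), yields
\[A(w)=\frac{w_5\bigl(w_3+w_4(w_1+w_2)\bigr)}{w_3+w_1w_4+w_2w_5},\]
and $p_{135}$ is computed explicitly in Lemma~\ref{lem:prob_zigzag}.

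Next, using the constraints $w_1+w_4=w_2+w_5=1$ together with the algebraic identity $w_2w_5-w_1w_4=(w_5-w_4)(1-w_4-w_5)$, a direct (if tedious) calculation yields
\[2A(w)-(w_4+w_5)=\frac{(w_5-w_4)\,w_3-(w_5-w_4)^2(1-w_4-w_5)}{w_3+w_1w_4+w_2w_5},\]
so that
\[F_4+F_5-F_3=\frac{(w_5-w_4)\,w_3-(w_5-w_4)^2(1-w_4-w_5)}{w_3+w_1w_4+w_2w_5}\;+\;\bigl(w_3-2\,p_{135}(w)\bigr).\]

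The key observation is that the two summands on the right-hand side cancel at leading order. Indeed, a direct check shows that the full expression vanishes identically on the face $\{w_3=0\}$ of $\mathcal{E}$ (where $w_1=w_2$ and $w_4=w_5$). To turn this cancellation into the quantitative bound $-8(w_3^2+w_5^2)$, I would combine the two summands over a common denominator and bound the resulting polynomial using $|w_5-w_4|\le w_3$ and $|1-w_4-w_5|\le 1$, both coming from~\eqref{set.E}, together with the estimates $w_2\ge\tfrac12$ (from $w_5\le\tfrac12$) and $w_4\le w_5+w_3\le\tfrac12+w_3$. The latter inequalities control from below both denominators: the one appearing above and the denominator $D(w)$ of $p_{135}$ given in Lemma~\ref{lem:prob_zigzag}.

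The main obstacle is this final algebraic step. Taken separately, each of the two summands can be as large in absolute value as a fixed multiple of $w_3$, so the desired quadratic bound only emerges from a delicate cancellation; tracking it through the combined fraction while recovering the explicit constant~$8$ requires careful bookkeeping of every error term.
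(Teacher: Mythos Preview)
Your decomposition is correct and in fact equivalent to the paper's: since $A=F_5+w_5$, your expression
\[F_4+F_5-F_3 = \bigl(2A-(w_4+w_5)\bigr) + \bigl(w_3-2p_{135}\bigr)\]
is exactly the paper's starting identity $F_4+F_5-F_3 = 2F_5 - 2p_{135} + w_5 - w_4 + w_3$. Your formula for $2A-(w_4+w_5)$ is also correct.

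However, the proof is not complete: you stop at precisely the point where the work begins, describing the final step as the ``main obstacle'' without carrying it out. Your plan to combine the two summands over a common denominator using the \emph{exact} expression for $p_{135}$ from Lemma~\ref{lem:prob_zigzag} would be unpleasant, because the denominator of $p_{135}$ is $(w_2+w_3+w_1w_4)(w_4+w_5)+w_2w_3+w_1w_3w_4$, which does not match the denominator $w_3+w_1w_4+w_2w_5$ of your first summand. Tracking the cancellation through the resulting quartic rational function and extracting the constant~$8$ is not something one can wave away.

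The paper avoids this by a single preparatory move: rather than using $p_{135}$ exactly, it first bounds
\[p_{135}(w)\ \le\ \frac{w_1w_3w_5}{w_3+w_1w_4+w_2w_5}\ \le\ \frac{w_2w_3w_5}{w_3+w_1w_4+w_2w_5} + 2w_3^2,\]
the second inequality coming from $w_1\le w_2+w_3$ and the denominator bound $w_3+w_1w_4+w_2w_5\ge w_5(1-w_5)\ge w_5/2$. Now $F_5$ and the bound on $p_{135}$ share the \emph{same} denominator, so $F_5-p_{135}\ge -\,w_2w_5(w_3-w_4+w_5)/(w_3+w_1w_4+w_2w_5) - 2w_3^2$, and adding back $w_3+w_5-w_4$ one finds after a short manipulation
\[F_4+F_5-F_3 \ \ge\ \frac{w_3^2-(w_4-w_5)^2}{w_3+w_1w_4+w_2w_5} - 4w_3^2 - 2w_4^2 \ \ge\ -4w_3^2 - 2(w_3+w_5)^2 \ \ge\ -8(w_3^2+w_5^2),\]
using $|w_4-w_5|\le w_3$ and $w_4\le w_3+w_5$. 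So the missing idea is not a deeper cancellation but a judicious upper bound on $p_{135}$ that forces a common denominator; once you have it, the constant~$8$ falls out in three lines.
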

\begin{proof}
Note that 
\[F_4(w) + F_5(w) - F_3(w) = 2F_5(w) - 2p_{135}(w) + w_5 - w_4+w_3.\]
Recall that by Equation~\eqref{eq.F2} (using the symmetry of the model), we have 
\[F_5(w) = \frac{(w_4-w_5)w_2w_5}{w_3+w_1w_4+w_2w_5}.\]
Also recall that (see Equation~\eqref{eq.p135}),
\begin{linenomath}\begin{align*}
p_{135}(w) 
&= \frac{w_1w_3w_5}{w_3(1+w_4+w_1w_4) + w_2w_4 + w_2w_5 + w_1w_4^2 + w_1w_4w_5 }\\
&\leq \frac{w_1w_3w_5}{w_3+ (w_3+w_2)w_4 + w_2w_5}
\leq  \frac{w_1w_3w_5}{w_3+ w_1w_4 + w_2w_5},
\end{align*}\end{linenomath}
where we have used in the last inequality that $w_1\leq w_2+w_3$ for all $w\in\mathcal E$.
Using again that $w_1\le w_2+w_3$ for all $w\in\mathcal E$, 
and the fact that $w_3+w_1w_4+w_2w_5\geq w_5(1-w_5)\ge w_5/2$, 
for all $w\in\mathcal E$ such that $w_5\le \nicefrac 12$, we get that
\[p_{135}(w) \leq \frac{w_2w_3w_5}{w_3+w_1w_4 + w_2w_5} + 2w_3^2.\]
Therefore, 
\[F_5(w) - p_{135}(w) 
\ge -\frac{w_5w_2(w_3-w_4+w_5)}{w_3+w_1w_4+w_2w_5} - 2w_3^2,\]
and thus
\begin{linenomath}\begin{align}
F_4(w)&+F_5(w) - F_3(w)
\ge \frac{(w_3+w_1w_4-w_2w_5)(w_3-w_4+w_5)}{w_3+w_1w_4+w_2w_5} - 4w_3^2\notag\\
&\geq \frac{(w_3+w_4-w_5)(w_3-w_4+w_5)}{w_3+w_1w_4+w_2w_5} +
\frac{(w_5(1-w_2)-w_4(1-w_1))(w_3-w_4+w_5)}{w_3+w_1w_4+w_2w_5}- 4w_3^2,\label{eq:w4^2}
\end{align}\end{linenomath}
Recall that, for all $w\in\mathcal E$, $w_1+w_4 = w_2+w_5 = 1$, and thus $w_5(1-w_2)-w_4(1-w_1)=w_5^2-w_4^2\geq -w_4^2$.  
As a consequence, for all $w_5\le \nicefrac 12$,  
\[\frac{(w_5(1-w_2)-w_4(1-w_1))(w_3-w_4+w_5)}{w_3+w_1w_4+w_2w_5}
\geq -\frac{w_4^2(w_3-w_4+w_5)}{w_3+w_1w_4+w_2w_5}
\geq -\frac{w_4^2(w_3+w_5)}{w_3+w_1w_4+w_5-w_5^2}
\geq -2w_4^2,\]
where we used that, as $w_5\le\nicefrac 12$, $w_3+w_1w_4+w_5-w_5^2\ge w_3+w_5(1-w_5)\ge (w_3+w_5)/2$.
Then from Equation~\eqref{eq:w4^2}, we get
\begin{linenomath}\begin{align*}
F_4(w)+F_5(w) - F_3(w)
& \ge  \frac{(w_3+w_4-w_5)(w_3-w_4+w_5)}{w_3+w_1w_4+w_2w_5} - 4w_3^2 - 2w_4^2\\
& \ge \frac{w_3^2 - (w_4-w_5)^2}{w_3+w_1w_4+w_2w_5} - 4w_3^2 - 2w_4^2
\geq - 4w_3^2 - 2(w_3+w_5)^2, 
\end{align*}\end{linenomath}
using that $|w_4-w_5|\le w_3$, for all $w\in\mathcal E$. 
This concludes the proof because $(w_3+w_5)^2\leq 2 w_3^2+2w_5^2$.
\end{proof}

The second result we shall need is the following general fact, which will be used at several places during the rest of the proof. 
For a process $(M_n)_{n\ge 0}$, we write $\Delta M_n : = M_{n+1}- M_n$, for all $n\ge 0$. 
\begin{lemma}\label{lem:martingale}
Let $a, b, c\in (0,1)$, be such that $b<a$ and $1<2a+c$.\\
{\bf (i)} Let $(A_n)_{n\geq 1}$ be a sequence of real random variables. On the event $\{A_n=\mathcal O(n^{b-1})\}$, we have almost surely as $m\to+\infty$,
\[\sup_{n\ge m} \sum_{i=m}^n \frac{A_i}{(i+3)^a}=\mathcal O(m^{b-a}).\]
{\bf (ii)} Let $(M_n)_{n\leq 0}$ be a real martingale such that $|\Delta M_n|\le 1$, almost surely for all $n\ge 0$.
On the event $\{\mathbb E_n [(\Delta M_n)^2] = \mathcal O(n^{-c})\}$, we have almost surely when $m\to+\infty$,
\[\sup_{n\ge m }\left| \sum_{i=m}^n \frac{\Delta M_i}{(i+3)^a} \right| = \mathcal O(m^{\kappa-a}),\]
for all $\kappa\in (\frac{1-c}2,a)$.
\end{lemma}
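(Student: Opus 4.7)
My plan for Part~\textbf{(i)} is direct. The hypothesis that $A_n=\mathcal O(n^{b-1})$ on the event of interest means that there is an almost surely finite random constant $C$ such that $|A_i|\le C\, i^{b-1}$ for every $i\ge 1$. Then
\[
\sup_{n\ge m}\,\left|\sum_{i=m}^n \frac{A_i}{(i+3)^a}\right| \;\le\; C\sum_{i=m}^\infty i^{b-1-a}.
\]
Since $b<a$ (and $b,a\in(0,1)$) one has $b-1-a<-1$, so the tail sum is comparable to $\int_m^\infty x^{b-1-a}\,dx=m^{b-a}/(a-b)$. This gives the bound $\mathcal O(m^{b-a})$ uniformly in $n\ge m$, which is exactly the claim.

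For Part~\textbf{(ii)}, I would combine a truncation, Doob's $L^2$ maximal inequality, a dyadic decomposition and Borel--Cantelli. First, define for every $K\ge 1$ the stopping time
\[
\tau_K := \inf\{n\ge 0\;:\; \mathbb E_n[(\Delta M_n)^2] > K\,n^{-c}\},
\]
and work with the stopped martingale $M^K_n := M_{n\wedge \tau_K}$. The event on which the assumption holds is exactly $\bigcup_{K\ge 1}\{\tau_K=\infty\}$, so it suffices to prove the conclusion on each $\{\tau_K=\infty\}$. Setting $N_n:=\sum_{i=0}^{n-1}\frac{\Delta M^K_i}{(i+3)^a}$, one gets a square-integrable martingale whose increments satisfy $\mathbb E[(\Delta N_i)^2]\le K\,(i+3)^{-2a-c}$. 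Since the assumption $2a+c>1$ makes $\sum_i (i+3)^{-2a-c}$ convergent, the tail quadratic variation is controlled:
\[
\sum_{i\ge m} \mathbb E[(\Delta N_i)^2] \;=\; \mathcal O\!\left(m^{1-2a-c}\right).
\]

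Next, for each $j\ge 0$ set $I_j:=[2^j,2^{j+1})$. By Doob's $L^2$ maximal inequality applied to the martingale $(N_n-N_{2^j})_{n\ge 2^j}$ together with the QV bound above,
\[
\mathbb E\!\left[\sup_{n\in I_j}|N_n-N_{2^j}|^2\right]
\;=\;\mathcal O\!\left(2^{j(1-2a-c)}\right).
\]
Markov's inequality then gives $\mathbb P\big(\sup_{n\in I_j}|N_n-N_{2^j}|>2^{j(\kappa-a)}\big)=\mathcal O\!\big(2^{j(1-c-2\kappa)}\big)$, and the hypothesis $\kappa>(1-c)/2$ makes this a summable series in $j$. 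By Borel--Cantelli, almost surely on $\{\tau_K=\infty\}$ one has $\sup_{n\in I_j}|N_n-N_{2^j}|\le 2^{j(\kappa-a)}$ for all $j$ large enough. Finally, for any $m$ with $2^{j_0}\le m<2^{j_0+1}$ and any $n\ge m$, writing the partial sum as a telescoping chain across the dyadic blocks between $m$ and $n$ and using $\kappa<a$ (so that $\sum_{j\ge j_0}2^{j(\kappa-a)}$ is a geometric series bounded by a constant times $2^{j_0(\kappa-a)}$) yields
\[
\sup_{n\ge m}\left|\sum_{i=m}^n \frac{\Delta M_i}{(i+3)^a}\right|
\;=\;\mathcal O\!\left(m^{\kappa-a}\right)
\]
almost surely on $\{\tau_K=\infty\}$. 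Letting $K\to\infty$ concludes the proof.

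The main subtlety is the precise threshold $(1-c)/2$ for $\kappa$: a one-shot application of Markov's inequality (rather than on exponentially growing scales) would only give the weaker threshold $1-c/2$. The dyadic decomposition is what converts the $L^2$ control into an almost-sure \emph{uniform} rate over $n\ge m$, and the geometric summation step requires exactly $\kappa<a$.
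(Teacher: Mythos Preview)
Your proof is correct. Part~\textbf{(i)} is exactly what the paper calls ``straightforward''. For Part~\textbf{(ii)}, the paper follows the same overall architecture (reduce to a deterministic bound on the conditional variance, control each dyadic block, geometric telescoping using $\kappa<a$, then Borel--Cantelli), but the concentration step within a block is handled differently: the paper intersects with the event $\mathcal A_{m,n}=\{\mathbb E_i[(\Delta M_i)^2]\le i^{-c+\varepsilon}\ \forall\, m\le i\le n\}$ and applies an exponential (Chernoff) Markov bound to $\exp\big(m^{a-\kappa}\sum_i \Delta M_i/(i+3)^a\big)$, followed by a crude union bound over $n\in[m,2m]$, which yields decay of order $\exp(-m^{\varepsilon/2})$. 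You instead localise via the stopping time $\tau_K$ and use Doob's $L^2$ maximal inequality plus Chebyshev, getting only polynomial decay $2^{j(1-c-2\kappa)}$ per block. Both are summable, so Borel--Cantelli applies either way; your route is slightly more elementary in that it only uses second moments (and avoids the union bound over $n$ within a block, which the paper can afford thanks to its exponential tail), while the paper's exponential bound exploits the full strength of $|\Delta M_n|\le 1$. The stopping-time device and the event-intersection device are equivalent ways of converting the random $\mathcal O(n^{-c})$ hypothesis into a deterministic one.
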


\begin{proof}
{\bf (i)} is straighforward. For {\bf (ii)}, 
we fix $\frac{1-c}2<\kappa<a$, 
and then $\varepsilon>0$, such that $\frac{1-c+\varepsilon}{2}<\kappa$, and $\hat\kappa:=\kappa+\nicefrac\varepsilon2<a$.
For $m\le n$, define the event 
\[\mathcal A_{m,n}:= \{ \mathbb  E_i[(\Delta M_i)^2] \le i^{-c+\varepsilon} \quad \forall m\le i \le n\}.\]
We have, for all $n\ge m\geq 1$,
\begin{linenomath}\begin{align*}
\mathbb P\bigg(\sum_{i=m}^n \frac{\Delta M_i}{(i+3)^a}
\geq m^{\hat \kappa-a},\, \mathcal A_{m,n}\bigg)
\leq \exp\big(-m^{\nicefrac\varepsilon2}\big)
\mathbb E\bigg[\prod_{i=m}^n \exp\bigg(m^{a- \kappa}
\frac{\Delta M_i}{(i+3)^a}\bigg){\bf 1}_{\mathcal A_{m,n}}\bigg].
\end{align*}\end{linenomath}
Using the bound $|\Delta M_n|\le 1$, and a Taylor expansion, we get for all $n\geq m$, on the event $\mathcal A_{m,n}$, 
\[\mathbb E_n\bigg[\exp\bigg(m^{a-\kappa}
\frac{\Delta M_n}{(n+3)^a}\bigg)\bigg]
= 1+\mathcal O\Big(m^{2a-2\kappa}
\frac{\mathbb E_n[(\Delta M_n)^2]}{(n+3)^{2a}}\Big)
\leq 1+\mathcal O\big(n^{-2\kappa-c+\varepsilon}\big),
\]
where the constant in the $\mathcal O$-term is deterministic.
By induction, and since $2\kappa+c-\varepsilon>1$,
we get that for all $m$ large enough,
\[\mathbb E\bigg[\prod_{i=m}^n \exp\bigg(m^{a-\kappa}
\frac{\Delta M_i}{(i+3)^a}\bigg){\bf 1}_{\mathcal A_{m,n}}\bigg]\leq 2,
\]
and thus for all $1\le m \le n$, with $m$ large enough, 
\[\mathbb P\bigg(\sum_{i=m}^n \frac{\Delta M_i}{(i+3)^a}
\geq m^{\hat \kappa-a}, \ \mathcal A_{m,n}\bigg)
\leq 2\exp\big(-m^{\nicefrac\varepsilon2}\big).\]
By symmetry and a union bound, we deduce that for all $m$ large enough,  
\[\mathbb P\bigg(\sup_{m\le n \le 2m} \left|\sum_{i=m}^n  \frac{\Delta M_i}{(i+3)^a}\right| 
\geq m^{\hat \kappa-a}, \ \mathcal A_{m,2m}\bigg)
\leq 4m\exp\big(-m^{\nicefrac\varepsilon2}\big).\]
Next, another union bound gives, for all $m$ large enough, 
\[\mathbb P\bigg(\sup_{n \ge m} \left|\sum_{i=m}^n  \frac{\Delta M_i}{(i+3)^a}\right| 
\geq R \cdot m^{\hat \kappa-a}, \ \mathcal A_{m,\infty}\bigg)
\leq \exp\big(-\frac 12 \cdot m^{\nicefrac\varepsilon2}\big),\]
with $R:=\sum_{i\ge 0} 2^{(\hat \kappa-a)i}$, which is finite since $\hat \kappa <a$. 
Then the result follows from Borel-Cantelli's lemma, since on the event  
$\{\mathbb E_n [(\Delta M_n)^2] = \mathcal O(n^{-c})\}$, almost surely $\mathcal A_{m,\infty}$ holds for all $m$ large enough. 
\end{proof}

We now prove Lemma~\ref{lem:seedW5}. 
\begin{proof}[Proof of Lemma~\ref{lem:seedW5}]
Consider the process $U(n)  = W_5(n) +W_4(n) -  W_3(n)$, and set $\hat U(n) := \frac{U(n)}{n+2}$. One has for any integers $m<n$, 
\begin{equation}\label{eq.Un}
\hat U(n) = \hat U(m) + \sum_{i=m}^{n-1} \frac{G(\hat{{\bf W}}(i))}{i+3} 
+ \sum_{i=m}^{n-1} \frac{\Delta \Phi(i)}{i+3},
\end{equation}
where $G(w)= F_5(w) +F_4(w)-F_3(w)$, 
and $\Delta \Phi(i) = Y(i+1) -\mathbb E_i Y(i+1)$, 
with $Y(i+1) = U(i+1)-U(i)$ for all $i\geq 0$.

Note first that, $|Y(n+1)|\le 2$ almost surely for all $n\ge 0$, by definition of the model, and thus also  $|\Delta \Phi(n)|\leq 4$.   
Note furthermore, that on $\mathcal A(\alpha)$, 
one has $W_4(n) = \mathcal O(n^\alpha)$, since $W_4(n) \le W_5(n) + W_3(n)$ (recall Lemma~\ref{lem:setE}), and thus $|\hat U(n)| = \mathcal O(n^{\alpha-1})$. Moreover, using Lemmas~\ref{lem:W3} and~\ref{lem:F2} (and the fact that if at some time $n$, $W_4$ increases by one unit, 
then either $W_3$ or $W_5$ also), 
we deduce that on $\mathcal A(\alpha)$, 
\[\mathbb E_n[|\Delta \Phi(n)|^2] \le \mathbb E_n[Y(n+1)^2] \le 4\cdot \mathbb P_n(Y(n+1)\neq 0)
=\mathcal O(\hat W_5(n)  + \hat W_3(n)) = \mathcal O(n^{\alpha-1}).\]
On the other hand, by Lemma~\ref{lem:F4F5F3}, on $\mathcal A(\alpha)$,
we have almost surely $G(\hat{{\bf W}}(i))  \ge  - \mathcal O(i^{2\alpha-2})$. 
Thus Lemma~\ref{lem:martingale} (applied with $a=1$, $b=2\alpha - 1$, and $c=1-\alpha$) and Equation~\eqref{eq.Un} (with $n$ taken large enough) give $\hat U(m) \le \mathcal O(m^{\beta-1})$, for any $\beta > \max(2\alpha - 1,\nicefrac \alpha 2)$, which proves the desired result.
\end{proof}

We deduce the following fact (recall the definition of the events $\mathcal A(\alpha)$ from Lemma~\ref{lem:seedW5}). 

\begin{lemma}\label{lem:polWi}
Let $\alpha \in (0,1)$ be given. One has almost surely, 
\[\mathcal A(\alpha)\subseteq \mathcal A(\beta),\] 
for any $\beta>\max(2\alpha-1,\frac \alpha 2)$.  
\end{lemma}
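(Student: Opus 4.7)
The plan is to combine the tight estimate on $U := W_4 + W_5 - W_3$ from Lemma~\ref{lem:seedW5} with a stochastic approximation argument on $W_3$ in the spirit of Lemma~\ref{lem:W3poldec}. Fixing any $\beta > \max(2\alpha - 1, \alpha/2)$, Lemma~\ref{lem:seedW5} gives that, on $\mathcal A(\alpha)$, almost surely $\hat U(n) := \hat W_4(n) + \hat W_5(n) - \hat W_3(n) = \mathcal O(n^{\beta - 1})$, and the constraint $\hat U \ge 0$ from $\mathcal E$ pins $w_4 + w_5$ tightly to $w_3$ from above. Using Lemma~\ref{lem:prob_zigzag} together with the identity $w_4 + w_5 = w_3 + \hat U$ (noting that all quadratic corrections in $\hat W_3, \hat W_4, \hat W_5$ are of order $\mathcal O(n^{2\alpha-2})$, which is absorbed into $\mathcal O(n^{\beta-1})$ since $\beta > 2\alpha - 1$) and the elementary inequality $\frac{w_3(w_3 + \hat U)}{2 w_3 + \hat U} \le \frac{w_3}{2} + \frac{\hat U}{2}$, one obtains the refined drift estimate, on $\mathcal A(\alpha)$,
\[
\mathbb E_n[\Delta W_3(n+1)] = p_{135}(\hat{\bf W}(n)) + p_{234}(\hat{\bf W}(n)) \le \frac{W_3(n)}{2(n+2)} + \mathcal O(n^{\beta - 1}).
\]

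Next, mimicking the stochastic approximation analysis of Lemma~\ref{lem:W3poldec}, I would set $Z_n := W_3(n)/n^{\beta'}$ for $\beta'$ slightly above $\max(2\alpha-1, \alpha/2)$. The drift of $Z_n$ is controlled by the previous step, while the martingale increments $\Delta M_3(n+1) := \Delta W_3(n+1) - \mathbb E_n[\Delta W_3(n+1)]$ satisfy $|\Delta M_3| \le 1$ and $\mathbb E_n[(\Delta M_3)^2] \le \mathbb E_n[\Delta W_3(n+1)]$ (since $\Delta W_3 \in \{0,1\}$). Applying Lemma~\ref{lem:martingale}(ii) with parameters chosen from these bounds to control the accumulated martingale noise, and iterating the resulting Gronwall-type bound so that the homogeneous $\sqrt n$-scale contribution is successively reduced, one obtains $Z_n \to 0$ almost surely, i.e., $W_3(n) = \mathcal O(n^{\beta'})$.

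Finally, combining $W_3(n) = \mathcal O(n^{\beta'})$ with $U(n) = \mathcal O(n^{\beta'})$ from the first step (and using $W_4 \ge 0$) gives $W_5(n) \le W_3(n) + U(n) = \mathcal O(n^{\beta'})$, which establishes $\mathcal A(\alpha) \subseteq \mathcal A(\beta')$. The main obstacle is the stochastic approximation step: the damping coefficient $1/2$ in the drift produces a ``natural'' P\'olya-urn-type scale $W_3 = \Theta(\sqrt n)$, so obtaining a bound of order $n^{\beta'}$ with $\beta'$ possibly much smaller than $1/2$ requires carefully exploiting the pointwise smallness of the error term $\mathcal O(n^{\beta-1})$ and the tight variance control provided by Lemma~\ref{lem:martingale}, iterating so that the spurious $\sqrt n$-type contribution becomes negligible compared to the target $n^{\beta'}$.
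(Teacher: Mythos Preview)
Your drift estimate $p_{135}+p_{234}\le \tfrac{w_3}{2}+\mathcal O(n^{\beta-1})$ is correct (it follows from $p_{135}+p_{234}=\tfrac{w_3(w_5+w_4/2)}{w_3+w_4+w_5}(1+o(1))$ together with $w_4+w_5=w_3+\hat U$), but the stochastic--approximation step built on it cannot reach the stated conclusion. With $Z_n=W_3(n)/n^{\beta'}$ one gets
\[
\mathbb E_n[Z_{n+1}]-Z_n \;\le\; \Big(\tfrac12-\beta'\Big)\frac{Z_n}{n}+\mathcal O\big(n^{\beta-\beta'-1}\big),
\]
so the Gronwall/supermartingale argument only yields $Z_n\to 0$ when $\beta'>\tfrac12$. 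You identify this yourself as ``the main obstacle,'' but the proposed fix (``iterating so that the spurious $\sqrt n$-type contribution becomes negligible'') does not work: the coefficient $\tfrac12$ is intrinsic to the bound and does not improve under iteration, so applying your argument again on $\mathcal A(\tfrac12+\varepsilon)$ still produces the same barrier. Since for $\alpha<3/4$ the target exponent $\max(2\alpha-1,\alpha/2)$ is strictly below $\tfrac12$, your scheme proves only the weaker inclusion $\mathcal A(\alpha)\subseteq\mathcal A(\beta')$ for $\beta'>\tfrac12$, which is not the lemma and, crucially, cannot be iterated down to $\mathcal A(\varepsilon)$ as needed in Corollary~\ref{cor:final}.

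The paper circumvents the $\tfrac12$ barrier by \emph{not} running a direct stochastic approximation on $W_3$. Instead it performs a case analysis on the sign of $W_5-W_4$ (via a scaled process $U(n)=(W_5-W_4)/(n+2)^{1-c}$ and Lemma~\ref{lem:F5-F4}(a)). In the case $W_5\le W_4+\mathcal O(n^\beta)$ it first bounds $W_5$ using $F_5\ge -\mathcal O(n^{\beta-1})$ and then exploits the sharper estimate $F_3\le \tfrac34(w_5-w_3)(1+o(1))\le \mathcal O(n^{\beta-1})$, controlling $\hat W_3$ through a one-sided bound on $F_3$ (combined with a dichotomy on $\liminf W_3/n^\beta$ and a P\'olya-urn comparison for the alternative). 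In the case $W_5\ge W_4$ it works with the auxiliary process $V=\tfrac92 W_4-W_3$ and Lemma~\ref{lem:F5-F4}(b),(c). In both branches the relevant drift inequalities are of the form ``$\le \mathcal O(n^{\beta-1})$'' rather than ``$\le \tfrac12 w_3+\mathcal O(n^{\beta-1})$,'' which is exactly what avoids the $\sqrt n$ obstruction.
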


For the proof of this result, we will need some intermediate result. 
\begin{lemma}\label{lem:F5-F4}
\begin{itemize}
\item[{\bf (a)}] For all $c\in (\nicefrac12,\nicefrac34)$, there exist positive constants $\varepsilon$ and $C$, such that for all $r\in [0,1)$, and all $w\in \mathcal E$, with $w_3\le \varepsilon$ and 
$w_4+w_5 \le w_3+ r$,  one has
\[F_5(w) - F_4(w) \ge   c(w_4-w_5)- Cr. \]
\item[{\bf (b)}]
There exist positive constants $\varepsilon$ and $C$, such that for any $w\in \mathcal E$, with $w_3\le \varepsilon$, $w_4+w_5\le w_3+r$, and $w_4\le w_5$, one has 
\[\frac 92 F_4(w) - F_3(w) \ge -Cr.\]
\item[{\bf (c)}]  Let $\rho \in (0,\nicefrac14)$ be given. There exist positive constants $\varepsilon$ and $C$, 
such that for any $r\in [0,1)$, and any $w\in \mathcal E$, with $w_3\le \varepsilon$, $w_4+w_5\le w_3 + r$, and
$ w_4\le \rho w_3+r$, one has 
\[F_4(w) \ge -Cr.\]
\end{itemize}
\end{lemma}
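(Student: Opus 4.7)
The plan is to derive all three bounds from explicit rational expressions for the probabilities, then Taylor expand in the small-weight regime. Applying the graph automorphism $1 \leftrightarrow 4,\ 2 \leftrightarrow 5$ to Lemma~\ref{lem:prob_zigzag} yields $p_{234}(w) = \frac{w_2 w_3 w_4}{(w_5+w_1w_4)(w_1+w_2+w_3) + w_3(w_1+w_2)}$, and combined with Lemma~\ref{lem:prob_zigzag.2} together with its symmetric counterpart $p_{45}+p_{135} = \frac{w_5(w_3 + w_4(w_1+w_2))}{w_3+w_1w_4+w_2w_5}$, and the identities $F_3 = p_{135}+p_{234}-w_3$, $F_4 = p_{45}+p_{234}-w_4$, $F_5 = p_{45}+p_{135}-w_5$, every $F_i$ becomes an explicit rational function of $(w_3,w_4,w_5)$ after substituting $w_1 = 1-w_4$ and $w_2 = 1-w_5$. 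Under the common hypothesis $w_3 \leq \varepsilon$ together with $w_4+w_5 \in [w_3, w_3+r]$ (the lower bound is inherited from $\mathcal E$), the triple $(w_3,w_4,w_5)$ lies in a box of size $\mathcal O(\varepsilon+r)$ around zero and each denominator is close to its value at $w=0$; this provides leading-order expansions with multiplicative remainders of order $\mathcal O(\varepsilon+r)$.

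For (a), I introduce the variables $s = w_4+w_5$ and $d = w_4-w_5$ (so $s\in[w_3,w_3+r]$ and $|d|\leq w_3$), and write $F_5 - F_4 = d + (p_{135}-p_{234})$. A direct expansion gives $F_5 - F_4 = \frac{d(w_3+4s)+w_3 s}{4(w_3+s)} + \mathcal O(\varepsilon w_3)$ to leading order. Setting $s = w_3+t$ with $t\in[0,r]$, the quantity $\frac{d(w_3+4s)+w_3 s}{4(w_3+s)} - cd$ simplifies to $\frac{d((5-8c)w_3+4(1-c)t) + w_3(w_3+t)}{4(2w_3+t)}$; at $t=0$ the extremes $d=\pm w_3$ yield respectively $\tfrac{w_3(3-4c)}{4}$ and $\tfrac{w_3(2c-1)}{2}$, both nonnegative for $c\in(\tfrac12,\tfrac34)$, and affineness in $d$ extends this nonnegativity to $|d|\leq w_3$. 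A direct derivative computation shows that the $t$-derivative of the leading term is uniformly bounded by $\tfrac14$, so the variation in $t$ contributes at most $\mathcal O(r)$. Combining these estimates with the multiplicative remainder $\mathcal O(\varepsilon w_3)$, the lower bound $F_5-F_4 \geq cd - Cr$ follows provided $\varepsilon$ is chosen small enough relative to the gaps $(3-4c)$ and $(2c-1)$, and $C$ is chosen large enough to absorb the $\mathcal O(r)$ from the $t$-variation.

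The same strategy handles (b) and (c) using the leading-order approximations $p_{234}(w) \approx \frac{w_3 w_4}{2(w_3+s)}$, $p_{135}(w) \approx \frac{w_3 w_5}{w_3+s}$, and $p_{45}(w) \approx \frac{2 w_4 w_5}{w_3+s}$. For (c), the hypothesis $w_4 \leq \rho w_3+r$ with $\rho<\tfrac14$ gives $w_5 = s-w_4 \geq (1-\rho)w_3 - r$, hence $4w_5+w_3 \geq (5-4\rho)w_3-4r \geq 4w_3-4r$; so $p_{45}+p_{234} \approx \frac{w_4(4w_5+w_3)}{2(w_3+s)} \geq w_4 - \mathcal O(w_4 r/w_3)$, and $F_4 \geq -Cr$ follows by a dichotomy on whether $w_4 \leq Cr$ (trivial since $p_{45}+p_{234} \geq 0$) or $w_4 > Cr$ (then $w_3 \gtrsim r/\rho$, so the error $w_4 r/w_3 \leq r(1+\rho)$). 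For (b), the coefficient $\tfrac92$ is tuned so that, with $w_4 \leq w_5 \leq (w_3+r)/2$, the leading-order expression for $\tfrac92 F_4 - F_3 = \tfrac92 p_{45} + \tfrac72 p_{234} - p_{135} + w_3 - \tfrac92 w_4$ is bounded below by a nonnegative quantity plus $\mathcal O(r)$: inserting the approximations, the positive contributions $w_3$, $\tfrac92 p_{45} \approx \frac{9 w_4 w_5}{w_3+s}$ and $\tfrac72 p_{234}$ dominate $\tfrac92 w_4 \leq \tfrac94(w_3+r)$ and the small $-p_{135}$.

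The main obstacle is part (a): the allowed range $c\in(\tfrac12,\tfrac34)$ matches precisely the interval over which the leading-order expression is nonnegative on $\{|d|\leq w_3\}$, and both endpoints are critical — the worst cases $d=\pm w_3$ yield leading values $\tfrac{w_3(3-4c)}{4}$ and $\tfrac{w_3(2c-1)}{2}$ that vanish as $c\to\tfrac34$ and $c\to\tfrac12$ respectively. Propagating the sharp inequality through the remainder terms of the Taylor expansion of $p_{135}-p_{234}$ and tracking their signs uniformly over the full region $\mathcal E\cap\{w_3\leq\varepsilon,\ w_4+w_5\leq w_3+r\}$, with a choice of $\varepsilon$ depending on how close $c$ is to either endpoint, is the delicate step.
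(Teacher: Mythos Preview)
Your approach is essentially the same as the paper's: both proofs write out the rational expressions for $p_{12}+p_{234}$, $p_{135}$, $p_{234}$, expand to leading order as $w_3,w_4,w_5\to 0$, and verify the required inequalities at leading order with $\mathcal O(r)$ slack. Your $(s,d,t)$-parametrisation for part~\textbf{(a)} is a clean alternative to the paper's case split on whether $w_4+w_5-w_3\gtrless 2w_3$; both arrive at the same conclusion that the admissible range for $c$ is exactly $(\tfrac12,\tfrac34)$, with the extreme cases $d=\pm w_3$ saturating the two endpoints. One small imprecision: your remainder $\mathcal O(\varepsilon w_3)$ in~\textbf{(a)} should really be $\mathcal O(w_3 s)=\mathcal O(\varepsilon w_3)+\mathcal O(w_3 r)$; the second piece is harmless since $w_3 r\le r$ can be absorbed into $Cr$, but it is worth stating.

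The one place where your write-up is too thin is part~\textbf{(b)}. You assert that ``the positive contributions $w_3$, $\tfrac92 p_{45}$, $\tfrac72 p_{234}$ dominate $\tfrac92 w_4$ and $-p_{135}$'', but this is precisely the content of the lemma and is \emph{not} obvious: the coefficient $\tfrac92$ is tuned so that the margin is barely positive. At $r=0$, $w_4=w_5=w_3/2$ (the worst case under $w_4\le w_5$), the leading-order positives sum to $\tfrac{41}{16}w_3$ and the negatives to $\tfrac{40}{16}w_3$, leaving a margin of only $\tfrac{w_3}{16}$; replacing $\tfrac92$ by anything larger than about $4.7$ would make the claim false. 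The paper carries out this computation explicitly (arriving at a lower bound $\tfrac{w_5}{8}-\tfrac{17r}{8}$), and you need to do the same: substitute your leading-order approximations, set $s=w_3$, and verify nonnegativity over $w_4\in[0,w_3/2]$ before invoking the $\mathcal O(r)$ perturbation.
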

\begin{proof}
Let us start with Part {\bf (a)}. 
Note that, if, under the assumption of the lemma, we have $w_4+w_5-w_3\geq 2w_3$, then $3w_3\leq w_4+w_5\leq w_3+r$, which implies $w_3\leq \nicefrac r2$, and thus $w_4+w_5\leq \nicefrac{3r}2$. In particular, we have that $w_3, w_4, w_5\in [0,2r)$.
Recall that, 
$F_5(w)-F_4(w) =  p_{135}(w) - p_{234}(w)+w_4-w_5\geq -p_{234}(w)-w_5$, 
and, by Equation~\eqref{eq.p135} (using the symmetry of the model), we have that, for all $w\in\mathcal E$,
$p_{234}(w)\leq w_4$.
Thus $F_5(w)-F_4(w)\geq -w_4-w_5\geq -4r$, which, using that $w_4\le 2r$, concludes the proof of {\bf (a)} in the case when $w_4+w_5-w_3\geq 2w_3$.
We now assume that $w_4+w_5-w_3< 2w_3$. This implies
\begin{equation}\label{eq:1/2w3}
\frac 1{2w_3}\ge \frac 1{w_3+w_4+w_5} \ge \frac1{2w_3(1+\frac {w_4+w_5- w_3}{2w_3})} \ge \frac{1}{2w_3} -\frac{r}{4w_3^2},
\end{equation}
using that $w_4+w_5\geq w_3$, for all $w\in\mathcal E$, for the first inequality.  
Using again~\eqref{eq.p135}, 
we get that, when $w_3, w_4, w_5\to 0$, with $(w_4+w_5)/3\le w_3\le w_4+w_5$,
\begin{linenomath}\begin{align*}
F_5(w) - F_4(w) & = p_{135}(w) - p_{234}(w)+w_4-w_5 \\
& = w_4-w_5 + \frac{w_3w_5}{w_3+w_4+w_5}(1-o(1)) - \frac{w_3w_4}{2(w_3+w_4+w_5)}(1+o(1))\\
& \ge  w_4-w_5 + \frac{w_5}{2}(1-o(1))  - \frac{w_4}{4}(1+o(1)) - \frac{r w_5(1+o(1))}{4w_3}\\
&\ge \frac{3w_4}{4}(1-o(1)) - \frac{w_5}{2}(1+o(1)) - \frac{3r(1+o(1))}{4},
\end{align*}\end{linenomath}
because $w_4+w_5-w_3<2w_3$ implies $w_5\leq 3w_3$.
This concludes the proof of {\bf (a)}.

\bigskip
We prove now Part {\bf (b)}. First note that if $w_4+w_5-w_3\ge 2w_3$, then we have as in Part {\bf (a)} that $w_3,w_4,w_5\in [0,2r]$, and since $F_3(w) \le 0$ by Lemma~\ref{lem:W3}, we deduce that $\nicefrac 92 \cdot F_4(w)-F_3(w) \ge -9w_4/2\ge -9r$, proving the result.
So we may assume now that $w_4+w_5-w_3< 2w_3$.
In this case
\[\frac 92 F_4(w) - F_3(w) = \frac 92 \big(F_5(w) +w_5-w_4 + p_{234}(w) - p_{135}(w)\big) - F_3(w). \]
Using Equation~\eqref{eq.F2}, we have, when $w_3, w_4, w_5\to 0$,
\[
F_5(w) +w_5-w_4
= \frac{(w_5-w_4)(w_3+w_1w_4)}{w_3+w_1w_4+w_2w_5}
= \frac{(w_5-w_4)(w_3+w_4)(1+o(1))}{w_3+w_4+w_5}. 
\]
Using Equation~\eqref{eq:1/2w3}, and the fact that $w_4\leq w_5$, we get
\[F_5(w) +w_5-w_4
\geq \frac{(w_5-w_4)(1+o(1))}{2}\Big(1-\frac{r}{2w_3}\Big)
\geq \frac{(w_5-w_4)(1+o(1))}{2}- \frac r4(1+o(1)),
\]
using that $w_5-w_4 \le w_3$.
In the proof of {\bf (a)}, we have shown that
\[p_{234}-p_{135}
=\frac{w_3w_4(1+o(1))}{2(w_3+w_4+w_5)} -\frac{w_3w_5(1+o(1))}{w_3+w_4+w_5}\geq \frac{(w_4-2w_5)(1+o(1))}{4}.\]
Using in addition that by assumption $w_3\geq w_4+w_5-r$, we get
\begin{linenomath}\begin{align}\label{steak}
F_3(w)
&=p_{135}+p_{234}-w_3 
= \frac{w_3w_5(1+o(1))}{w_3+w_4+w_5}
+\frac{w_3w_4(1+o(1))}{2(w_3+w_4+w_5)}-w_3\\ \nonumber
&\leq -\frac{w_5(1+o(1))}2 
-\frac{3w_4(1+o(1))}4+r.
\end{align}\end{linenomath}
In total, we thus get
\begin{linenomath}\begin{align*}
\frac 92 F_4(w) - F_3(w)
& \ge -\frac{9w_4(1+o(1))}8- 
\frac{9 r (1+o(1))}{8}
+ \Big(\frac{w_5}{2} + \frac{3w_4}{4}-r\Big)(1+o(1)) \\
& \ge -\frac{3w_4}8(1+o(1))   + \frac{w_5}{2}(1-o(1))-
 \frac{17 r (1+o(1))}{8},\\
&\geq \frac{w_5(1+o(1))}8- 
 \frac{17 r (1+o(1))}{8},
\end{align*}\end{linenomath}
because $w_4\leq w_5$,
which concludes the proof of {\bf (b)}.
\bigskip 

Finally we prove {\bf (c)}. Assuming again that $w_4+w_5-w_3<2w_3$ (as otherwise we conclude as in Part {\bf (b)}), 
we get when $w_3\to 0$
(and as consequence $w_4,w_5\to 0$ also), 
\begin{linenomath}\begin{align*}
F_4(w) & = F_5(w) + p_{234}(w) - p_{135}(w) + w_5- w_4 \\
& = \frac{(w_5-w_4)(w_3+w_1w_4)}{w_3+w_1w_4+w_2w_5} + p_{234}(w) - p_{135}(w) \\
 & \ge \frac{(w_5-w_4)(w_3+w_1w_4)}{w_3+w_1w_4+w_2w_5} + \frac{w_3w_4(1-o(1))}{2(w_3+w_4+w_5)} - \frac{w_3w_5}{w_3+w_1w_4+w_2w_5}\\
 & \ge -w_4(1+o(1))\cdot \frac{w_3+w_1w_4-w_1w_5}{w_3+w_4+w_5} +  \frac{w_3w_4(1-o(1))}{2(w_3+w_4+w_5)}\\
 & = \frac{w_4\left[w_5(1-o(1))-(\frac{w_3}{2}+w_4)(1+o(1))\right]}{w_3+w_4+w_5}.
 \end{align*}\end{linenomath}
 Using the fact that, for all $w\in\mathcal E$, $w_5\geq w_3-w_4$, and the fact that, by assumption, $w_4\leq \rho w_3+r$, we get
 \[w_5-\frac{w_3}2-w_4\geq \frac{w_3}2-2w_4\geq \frac{w_3}2-2\rho w_3-2r=
 \frac{w_3}2(1-4\rho)-2r,\]
 which implies 
 \[F_4(w)\ge \frac{w_4w_3(1 -4\rho)}{2(w_3+w_4+w_5)}(1-o(1)) - {2{r(1+o(1))}}
\geq -{2r(1+o(1))}, 
 \]
 since $\rho<\nicefrac14$ by assumption.
\end{proof}

We are now ready to prove Lemma~\ref{lem:polWi}.
\begin{proof}[Proof of Lemma~\ref{lem:polWi}]
We fix $\alpha\in(0,1)$.
Recall that 
\[\mathcal A(\alpha) = \big\{W_3(n)=\mathcal O(n^\alpha) \text{ and }W_5(n)=\mathcal O(n^\alpha) \big\}.\]
The proof is divided in three steps. 

\bigskip

\underline{First step.} Fix $c\in ({\nicefrac34},1)$. 
Consider the process $U(n) = \frac{W_5(n) - W_4(n)}{(n+2)^{1-c}}$.  By Equation~\eqref{eq:algo_sto}, we have for $n\ge 1$, 
\begin{equation}\label{eq:W5-W4}
U(n+1) =  U(n) + \frac{r_n}{(n+3)^{1-c}} + \frac{\Delta \Psi(n)}{(n+3)^{1-c}},
\end{equation}
where (when $n\to+\infty$ in the second equality)
\begin{linenomath}
\begin{align*}
r_n &= F_5(\hat{{\bf W}}(n))+\hat W_5(n)-F_4(\hat{{\bf W}}(n))-\hat W_4(n)+\big((n+2)^{1-c}-(n+3)^{1-c}\big)U(n)\\
&= F_5(\hat{{\bf W}}(n))-F_4(\hat{{\bf W}}(n))+\hat W_5(n)-\hat W_4(n)+(c-1+o(1))n^{-c}U(n),
\end{align*}
\end{linenomath}
and $\Delta \Psi(n) := \Delta M_5(n+1) - \Delta M_4(n+1)$.
By Lemma~\ref{lem:seedW5}, for all $\beta>\max(2\alpha-1,\frac \alpha 2)$,
almost surely on $\mathcal A(\alpha)$,
we have $\hat W_4(n)+\hat W_5(n)\leq \hat W_3(n)+ \mathcal O(n^{\beta-1})$ 
when $n\to+\infty$. 
Using Lemma~\ref{lem:F5-F4}{\bf (a)}, this implies that, {almost surely on $\mathcal A(\alpha)$}, for all $n$ large enough,
\[F_5(\hat{{\bf W}}(n))-F_4(\hat{{\bf W}}(n))
\geq (c-\nicefrac14) \big(\hat W_4(n)-\hat W_5(n)\big)-\mathcal O(n^{\beta-1}),\]
and thus (using the fact that $U(n) = (\hat W_5(n)-\hat W_4(n))(n+2)^c$)
\begin{equation}\label{eq:rn}
r_n\geq  (1-c + \nicefrac 14)(\hat W_5(n)-\hat W_4(n))-\mathcal O(n^{\beta-1})+(c-1+o(1))n^{-c}U(n)
= \Big(\nicefrac 14+o(1)\Big)
n^{-c}U(n)-\mathcal O(n^{\beta-1}),
\end{equation}
for all $\beta>\max(2\alpha-1,\frac \alpha 2)$.   
Note that $\Delta U(n) \neq 0$ implies $W_3(n+1)-W_3(n)=1$, and thus by Lemma~\ref{lem:W3} on $\mathcal A(\alpha)$, one has 
\[\mathbb E_n[|\Delta \Psi(n)|^2] \le \mathbb P(\Delta U(n) \neq 0) =\mathcal O(n^{\alpha-1}).\] 
Using next Lemma~\ref{lem:martingale}, we deduce that for any $\beta>\alpha/2$, 
\begin{equation}\label{eq:Psi}
\sup_{n\ge m} \left| \sum_{i=m}^n \frac {\Delta \Psi(i)}{(i+3)^{1-c}}\right| =\mathcal O( m^{-1+c+\beta}).
\end{equation}
By Equation~\eqref{eq:W5-W4}, we have, for all $n> m$,
\begin{equation}\label{eq.U.c}
U(n)= U(m)+\sum_{i=m}^{n-1} \frac{r_i}{(i+3)^{1-c}}+\sum_{i=m}^{n-1} \frac{\Delta \Psi(i)}{(i+3)^{1-c}}.
\end{equation}
Now fix $\beta>\max(2\alpha - 1,\frac \alpha 2)$. 
Observe that if for some $\varepsilon>0$,  $\limsup_{m\to+\infty} (W_5(m)-W_4(m))/m^{\beta+\varepsilon}>0$,  
then Equations \eqref{eq:rn}, \eqref{eq:Psi}, \eqref{eq.U.c} and Lemma~\ref{lem:martingale} imply, by induction, that 
$U(n)\geq  0$, for all $n$ large enough. Thus on $\mathcal A(\alpha)$, there are only two possibilities: either 
$W_5(n) \le W_4(n) +\mathcal O(n^\beta)$, for all $\beta>\max(2\alpha - 1,\frac \alpha 2)$, or $W_5(n) \ge W_4(n)$ for all large enough $n$.

\bigskip

\underline{Second step.} Consider first the case when 
$W_5(n) \le W_4(n) + \mathcal O(n^{\beta})$, for any $\beta>\max(2\alpha - 1,\frac \alpha 2)$. 
Note that, for all $w\in\mathcal E$ and asymptotically when $w_3, w_4, w_5\to 0$, we have, {using Equation \eqref{eq.F2},}
\begin{equation}\label{eq.F5simple}
F_5(w) = \frac{(w_4-w_5)w_2w_5}{w_3+w_1w_4+w_2w_5}
=\frac{(w_4-w_5)w_5}{w_3+w_4+w_5}(1+o(1)).
\end{equation}
By Lemma~\ref{lem:seedW5}, on $\mathcal A(\alpha)$, 
we have $W_3(n)\le W_4(n) + W_5(n)\leq W_3(n)+\mathcal O(n^\beta)$. 
Since we also assume, in this second step, 
that $W_5(n) \le W_4(n) + \mathcal O(n^{\beta})$, 
this implies $2W_5(n)\leq W_3(n)+\mathcal O(n^\beta)$.
Therefore, using that $W_4(n)-W_5(n)\geq -{W_3(n)\wedge}\mathcal O(n^{\beta})$ 
we get
$F_5(\hat{{\bf W}}(n))\geq -\mathcal O(n^{\beta-1})$.
Next, let us prove that for any $\beta>\max( 2\alpha- 1, \frac \alpha 2)$,   
$W_5(n) = \mathcal O(n^{\beta})$. Using \eqref{eq:algo_sto}, we have, for $n\ge m$,
\[
\hat W_5 (n) = \hat W_5(m) + \sum_{i=m}^{n-1}\frac{F_5(\hat{{\bf W}}(i))}{i+3}+ \sum_{i=m}^{n-1}\frac{\Delta M_5(i+1)}{i+3},
\]
where  by Lemma~\ref{lem:martingale} the two sums are greater than $-\mathcal O(m^{\beta-1})$. On $\mathcal A(\alpha)$, if $\limsup_m\hat{W}_5(m)/m^{\beta-1}=\infty$, then the equation above would contradict that $\hat{W}_5(n)$ goes to zero, when $n\to \infty$.
Thus $W_5(n) = \mathcal O(n^\beta)$, as claimed.

Now note that, for all $w\in\mathcal E$, asymptotically when $w_3, w_4, w_5\to 0$,
\[F_3(w)  = p_{135}(w) + p_{234}(w) - w_3 
= \frac{w_3(w_5+\frac{w_4}2)}{w_3 + w_4 + w_5} (1+o(1))- w_3 
\le \frac{2w_5+w_4}{4}(1+o(1)) - w_3,\]
where we have used Equation~\eqref{steak} and the fact that $w_4+w_5\ge w_3$ for all $w\in\mathcal E$.
Since, for all $w\in\mathcal E$, $w_4\leq w_3+w_5$, we get
\[F_3(w)\leq  \frac{3(w_5-w_3)}{4}(1+o(1)).\]
Applying this to $w=\hat{{\bf W}}(n)$ (which belongs to $\mathcal E$ by Lemma~\ref{lem:setE}), we get that on $\mathcal A(\alpha)$, almost surely when $n\to+\infty$,
\[F_3(\hat{{\bf W}}(n)) \leq \mathcal O(n^{\beta-1}).\]
Now if $\liminf {W_3(n)}/{n^\beta} \le 1$, for some $\beta>\max(2\alpha-1, \nicefrac\alpha2)$, 
then Lemma~\ref{lem:martingale}  
gives $W_3(n) = \mathcal O(n^\beta)$ following an argument very similar to the one above for $\hat{\bf W}_5(n)$. 
On the other hand, if $\liminf {W_3(n)}/{n^\beta} =+\infty$, for any 
 $\beta>\max( 2\alpha- 1, \frac \alpha 2)$, 
then, as $W_4(n)\ge W_3(n)-W_5(n)\ge W_3(n)-\mathcal O(n^\beta)$, we have $W_4(n) - W_5(n)\ge 0$, for all $n$ large enough,  which implies $F_5(\hat{{\bf W}}(n)) \ge 0$. 
From \eqref{eq:algo_sto}, this means that for $m$ large enough, the process $(W_5(n))_{n\ge m}$ stochastically dominates a P\'olya urn process $(R_n)_{n\ge m}$ defined by $\mathbb P(R_{n+1} = R_n + 1\mid R_n ) = 1- P(R_{n+1} = R_n \mid R_n )= \frac{R_n}{n+2}$, which is well known to grow almost surely linearly in $n$ (this can be seen using Rubin's construction as in the proof of Lemma~\ref{lem:W5poldec}). Thus $W_5(n)$ would also grow linearly in $n$, and we would get a contradiction. Therefore, necessarily $W_3(n) = \mathcal O(n^\beta)$, as wanted. 

\bigskip 
 
\underline{Third step.} Consider next the case when $W_5(n) \ge W_4(n)$, for all $n$ large enough. 
Define $V(n) = \frac 92 W_4(n) - W_3(n)$, and $\hat V(n) = \frac{V(n)}{n+2}$. One has for any $n\ge 1$, 
\[\hat V(n+1) = \hat V(n) + \frac{H(\hat{{\bf W}}(n))}{n+3} + \frac{\Delta \Theta(n)}{n+3},\]
with again $\Delta \Theta(n)$ the increment of some martingale, and $H(w)  = \frac 92 F_4(w) - F_3(w)$. Using Lemmas~\ref{lem:F5-F4}{\bf (b)} 
and~\ref{lem:martingale} (with arguments similar to those in the second step), 
we deduce that $V(n) \le \mathcal O(n^\beta)$, for any $\beta>\max( 2\alpha- 1, \frac \alpha 2)$. 
We note finally that by Lemma~\ref{lem:F5-F4}{\bf (c)} this entails $F_4(\hat{{\bf W}}(n)) \ge -\mathcal O(n^{\beta{\cec -1}})$, and thus 
by another application of Lemma~\ref{lem:martingale}, we conclude that $W_4(n) = \mathcal O(n^\beta)$, for any $\beta>\max( 2\alpha- 1, \frac \alpha 2)$. 
Then we can use the same argument as in step 2: we first observe that this entails 
\[F_3({\hat{{\bf W}}(n)})
\le \frac{\hat W_5(n)}{2}(1-o(1))-{\hat W_3(n)} + \mathcal O(n^{\beta-1})
\le - \frac{\hat W_3(n)}{4} + \mathcal O(n^{\beta-1})\le  \mathcal O(n^{\beta-1}).\] 
Therefore, if $\liminf \frac{W_3(n)}{n^{\beta}} = \infty$, then $W_3(n) \sim W_5(n)$, and $W_4(n)= o(W_3(n))$. 
Thus by Lemma~\ref{lem:F5-F4}{\bf (c)} again (applied with $r=0$), we get $F_4(\hat{{\bf W}}(n))\ge 0$, 
for all $n$ large enough, which leads to a contradiction as in step 2. 
We conclude that $W_3(n) = \mathcal O(n^\beta)$, as wanted. 
This concludes the proof of the lemma. 
\end{proof}

An immediate corollary of the results obtained so far is the following fact. 
\begin{corollary}\label{cor:final}
On the event when $W_1(n)/n\to 1$, one has almost surely for any $\varepsilon>0$, 
\[W_3(n)= \mathcal O(n^\varepsilon).\] 
\end{corollary}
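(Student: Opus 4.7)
The strategy is a straightforward bootstrap, iterating Lemma~\ref{lem:polWi} from an initial exponent provided by Lemmas~\ref{lem:W3poldec} and~\ref{lem:W5poldec}. Work throughout on the event $\{W_1(n)/n \to 1\}$, and recall the notation $\mathcal A(\alpha) = \{W_i(n) = \mathcal O(n^{\alpha})\text{ for }i=3,5\}$ from Lemma~\ref{lem:seedW5}.

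First, I would produce a seed exponent. By Lemma~\ref{lem:W3poldec}, for any $\alpha_0 \in (\tfrac56, 1)$ one has almost surely $W_3(n) = o(n^{\alpha_0})$, hence $W_3(n) = \mathcal O(n^{\alpha_0})$. Applying then Lemma~\ref{lem:W5poldec} on $\{W_1(n)/n \to 1\}$, I get $W_5(n) = o(n^{\beta})$ for every $\beta > \alpha_0$; picking any such $\beta$ still in $(\tfrac56, 1)$, the event $\mathcal A(\beta)$ holds almost surely on $\{W_1(n)/n \to 1\}$.

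Next, I would iterate Lemma~\ref{lem:polWi}, which says $\mathcal A(\alpha) \subseteq \mathcal A(\beta)$ for every $\beta > \varphi(\alpha)$, where
\[\varphi(\alpha) := \max\!\bigl(2\alpha - 1,\, \tfrac{\alpha}{2}\bigr).\]
For $\alpha \in (0,1)$ one has $\varphi(\alpha) < \alpha$: indeed $2\alpha - 1 < \alpha$ and $\alpha/2 < \alpha$. Starting from $\alpha_0 \in (\tfrac56,1)$, define recursively $\alpha_{k+1}$ to be any number in the open interval $\bigl(\varphi(\alpha_k),\, \alpha_k\bigr)$, for example $\alpha_{k+1} = \tfrac{1}{2}\bigl(\alpha_k + \varphi(\alpha_k)\bigr)$. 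Since $\varphi$ is continuous and satisfies $\varphi(0)=0$ and $\varphi(\alpha)<\alpha$ on $(0,1)$, the decreasing sequence $(\alpha_k)_{k\ge 0}$ converges to some fixed point of $\varphi$ in $[0, \alpha_0]$, and the only such fixed point is $0$. Lemma~\ref{lem:polWi} applied inductively gives that $\mathcal A(\alpha_k)$ holds almost surely on $\{W_1(n)/n \to 1\}$, for every $k \ge 0$. Given $\varepsilon>0$, choose $k$ so that $\alpha_k \le \varepsilon$: then $W_3(n) = \mathcal O(n^{\alpha_k}) = \mathcal O(n^{\varepsilon})$, which is the claim.

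There is no genuine obstacle here since all the analytical work has been done in the previous lemmas; the only mild point to check is that the deterministic dynamical system $\alpha \mapsto \varphi(\alpha)$ drives any $\alpha_0 \in (0,1)$ to $0$, which is immediate from the two cases $\alpha \ge 2/3$ (where $\varphi(\alpha)=2\alpha-1$) and $\alpha<2/3$ (where $\varphi(\alpha)=\alpha/2$).
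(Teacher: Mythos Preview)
Your proof is correct and follows essentially the same bootstrap argument as the paper: seed with Lemmas~\ref{lem:W3poldec} and~\ref{lem:W5poldec}, then iterate Lemma~\ref{lem:polWi} using that the map $\varphi(\alpha)=\max(2\alpha-1,\alpha/2)$ has $0$ as its only fixed point in $[0,1)$. If anything, your choice $\alpha_{k+1}=\tfrac12(\alpha_k+\varphi(\alpha_k))$ is slightly more careful than the paper's $\alpha_{n+1}=\varphi(\alpha_n)$, since Lemma~\ref{lem:polWi} is stated with a strict inequality $\beta>\varphi(\alpha)$.
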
 
\begin{proof} It suffices to combine Lemmas~\ref{lem:W3poldec} and~\ref{lem:W5poldec} 
with Lemma~\ref{lem:polWi}, which we can iterate as much as needed.
Indeed, the map $\varphi : \alpha\mapsto \max(2\alpha -1, \frac \alpha 2)$ is decreasing, with $0$ as unique fixed point in $[0,1)$, 
which implies that any sequence defined by $\alpha_{n+1} = \varphi(\alpha_n)$, with $\alpha_0<1$, converges to~$0$. 
Lemmas~\ref{lem:W3poldec} and~\ref{lem:W5poldec} give the existence of $\alpha_0<1$ such that, on the event $W_1(n)/n\to 1$, $\mathcal A({\alpha_0})$ has probability~1. Lemma~\ref{lem:polWi} then implies that for all $n\geq 0$, on the event $W_1(n)/n\to 1$, $\mathcal A({\alpha_n})$ has probability~1. We then choose $n$ large enough so that $\alpha_n<\varepsilon$.
\end{proof}

The final step is the following result, which together with Corollary~\ref{cor:final} brings a contradiction, if $W_1(n)/n \to 1$, and therefore concludes the proof of Proposition~\ref{prop:nonconv}. 
\begin{lemma}\label{lem:finalstep}
On the event when $W_1(n)/n \to 1$, one has almost surely for any $c\in (0,\nicefrac 15)$, 
\[\lim_{n\to \infty} \frac{W_3(n)}{n^c} =+\infty.\] 
\end{lemma}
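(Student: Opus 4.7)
The plan is to show that, on the event $E := \{W_1(n)/n \to 1\}$, the process $(W_3(n))$ stochastically dominates, from some a.s.\ finite random time $n_0$ onward, a generalised P\'olya urn $(R_n)$ with increment probability $R_n/(5(n+2))$; Rubin's embedding then yields $R_n/n^{1/5} \to \mathcal{C}_\infty > 0$ almost surely, which gives the claim.

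On $E$, $\hat{W}_1(n) \to 1$ by definition. Since $|\hat{W}_1-\hat{W}_2|\le \hat{W}_3$ and $\hat{W}_3(n)\to 0$ a.s.\ (by Lemmas~\ref{lem:setE} and~\ref{lem:W3}), we deduce $\hat{W}_2(n)\to 1$ a.s., and $\hat{W}_4(n)=1-\hat{W}_1(n)\to 0$, $\hat{W}_5(n)=1-\hat{W}_2(n)\to 0$ a.s. The core step is the following sharp asymptotic lower bound on the probability that the $n$-th walker reinforces edge~$3$. Using Lemma~\ref{lem:prob_zigzag} together with the top-bottom symmetry of the losange (which gives $p_{234}(w_1,w_2,w_3,w_4,w_5)=p_{135}(w_4,w_5,w_3,w_1,w_2)$), one expands in the regime $w_1,w_2\to 1$, $w_3,w_4,w_5\to 0$:
\[
p_{135}(w) = \frac{w_3 w_5}{w_3+w_4+w_5}(1+o(1)), \qquad p_{234}(w) = \frac{w_3 w_4}{2(w_3+w_4+w_5)}(1+o(1)),
\]
so that $p_{135}(w)+p_{234}(w) = \frac{w_3(2w_5+w_4)}{2(w_3+w_4+w_5)}(1+o(1))$. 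A direct minimisation of the ratio $(2w_5+w_4)/(2(w_3+w_4+w_5))$ under the $\mathcal{E}$-constraints $w_4+w_5\ge w_3$ and $|w_4-w_5|\le w_3$ gives an infimum equal to $1/4$, attained at $(w_4,w_5)=(w_3,0)$. Therefore, on $E$ and for every $n$ large enough,
\[
p_{135}(\hat W(n)) + p_{234}(\hat W(n)) \ge \frac{\hat W_3(n)}{5}.
\]

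Let $n_0$ be the a.s.\ finite first time after which the above inequality holds for all subsequent $n$. Define the Markov chain $(R_n)_{n\ge n_0}$ by $R_{n_0}:=W_3(n_0)$ and $\mathbb{P}(R_{n+1}=R_n+1\mid R_n)=R_n/(5(n+2))$, and couple $(R_n)$ with $(W_3(n))$ using independent uniforms and the natural one-step comparison (as at the end of the proof of Proposition~\ref{prop:good_lowerbound}), so that $W_3(n)\ge R_n$ for all $n\ge n_0$, a.s.\ on $E$. In the time change $s=(1/5)\log n$, Rubin's construction identifies $(R_n)$ with a Yule process of unit rate starting from $R_{n_0}\ge 1$ particles; standard Yule-process asymptotics then give $R_n/n^{1/5}\to \mathcal{C}_\infty$ a.s., with $\mathcal{C}_\infty$ a.s.\ positive (distributed as a Gamma variable times a deterministic constant). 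Combined with $W_3(n)\ge R_n$, we obtain $W_3(n)/n^c\ge \mathcal{C}_\infty n^{1/5-c}(1+o(1))\to\infty$ a.s.\ on $E$ for every $c\in(0,1/5)$, as claimed.

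The main difficulty is securing the sharp asymptotic lower bound $p_{135}+p_{234}\ge \hat W_3/5$: a cruder constant in place of $1/4$ would lead to a strictly smaller exponent, which might not suffice for the contradiction with Corollary~\ref{cor:final} that is used in the proof of Proposition~\ref{prop:nonconv}. The key ingredients are the asymmetry between $p_{135}$ and $p_{234}$ in the regime $w_1,w_2\to 1$ (the extra factor $1/2$ in the denominator of $p_{234}$) together with the tight use of the constraints defining $\mathcal{E}$, which jointly pin down the exponent $1/5$.
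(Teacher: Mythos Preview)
Your proposal is correct and follows the same route as the paper: establish the asymptotic
\[
p_{135}(w)+p_{234}(w)=\frac{w_3(2w_5+w_4)}{2(w_3+w_4+w_5)}(1+o(1))
\]
in the regime $w_3,w_4,w_5\to 0$, use the constraints of $\mathcal E$ to extract the sharp constant $1/4$, and then couple $W_3$ from some random time onward with a one-dimensional urn growing like $n^{1/5}$.

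The only imprecision is in the last step. Your comparison urn has increment probability $R_n/(5(n+2))$, which does \emph{not} factor as $\varphi(R_n)/(\varphi(R_n)+\psi(B_n))$ for functions of the two ball-counts separately; hence Rubin's embedding does not literally apply, and the identification with a Yule process under the time change $s=(1/5)\log n$ is a heuristic rather than an exact construction (in a genuine Yule process there is a nonzero probability of multiple births on each discretisation interval). The conclusion $R_n/n^{1/5}\to\mathcal C_\infty>0$ a.s.\ is nonetheless true: your urn is a triangular two-colour urn with mean replacement matrix $\bigl(\begin{smallmatrix}1/5&4/5\\0&1\end{smallmatrix}\bigr)$, for which this is classical (see e.g.\ Janson~\cite{Janson05}), or one can argue directly that $M_n:=R_n/a_n$ with $a_n=\prod_{k<n}(1+1/(5(k+2)))\sim c\,n^{1/5}$ is an $L^2$-bounded positive martingale with a.s.\ positive limit. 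The paper sidesteps this point by writing the lower bound in the equivalent form
\[
p_{135}(\hat{\mathbf W}(n))+p_{234}(\hat{\mathbf W}(n))\ \ge\ \frac{\hat W_3(n)/5}{1-\hat W_3(n)+\hat W_3(n)/5},
\]
which yields a comparison urn of genuine Rubin type $\frac{U/5}{B+U/5}$ (with $B=n+2-U$), so that the $n^{1/5}$ growth follows directly from Rubin's construction.
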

\begin{proof}
Recall that when $w_3$, $w_4$ and $w_5$ go to $0$, one has for $w\in \mathcal E$,  
\[p_{135}(w) + p_{234}(w) = \frac{w_3(w_5+ \frac{w_4}{2})}{w_3+w_4+w_5} (1+o(1)). \]
Using now that $w_4+w_5\ge w_3$, we get that
\[p_{135}(w) + p_{234}(w) \geq \frac{w_3(1+o(1))}{4} =  \frac{\nicefrac{w_3}4(1+o(1))}{1-w_3+\nicefrac{w_3}4}.\]
Thus there exists $\varepsilon>0$, such that for any $w\in \mathcal E$, with $w_3,w_4,w_5\le \varepsilon$, 
\[p_{135}(w) + p_{234}(w) \geq \frac{\nicefrac{w_3}5}{1-w_3+\nicefrac{w_3}5}.\]
By Proposition~\ref{prop:W3} and Lemma~\ref{lem:W5poldec}, we deduce that almost surely on the event when $W_1(n)/n\to 1$, 
there exists a random integer $n_0$ such that, for all $n\geq n_0$,
\[p_{135}(\hat {{\bf W}}(n)) + p_{234}(\hat {{\bf W}}(n)) \geq  \frac{\hat W_3(n)/5}{1-\hat W_3(n)+\nicefrac{\hat W_3(n)}5} 
= \frac{\nicefrac{W_3(n)}5}{n+2-W_3(n)+\nicefrac{W_3(n)}5}.\]
Therefore, after some (random) time $n_0$, the process $(W_3(n))_{n\geq n_0}$ stochastically dominates 
an urn process $(U(n))_{n\geq n_0}$, defined 
by $U(n_0) = 1$ and, for all $n\geq n_0$,
\[\mathbb P\Big(U(n+1) = U(n)+1\mid U(n)\Big)= 1-\mathbb P\Big(U(n+1) = U(n)\mid U(n)\Big) = \frac{U(n)/5}{n+n_0+2-U(n)+U(n)/5}.\]
For any fixed $n_0$, the urn process $(U(n))_{n\geq n_0}$ is studied for instance in Janson~\cite{Janson05} (see in particular Theorem 1.4 and Remark 1.12 there), which provides a precise asymptotic behavior of $n^{-\nicefrac 15} U(n)$: it converges in law towards  
some non-degenerate random positive variable. But here one can simply rely again on Rubin's construction, which covers our needs. It shows that for any fixed $n_0$, 
almost surely there exists a constant $c>0$, such that $U(n)\ge cn^{1/5}$, for any $n\ge n_0$,   
and the lemma follows. 
\end{proof}
The proofs of Proposition~\ref{prop:nonconv} and Theorem~\ref{theo:losange} are now complete.  

\section{Proof of Proposition~\ref{prop:ex_Daniel}}\label{sec:ex_Daniel}
We fix an integer~$L$ and look at the uniform-geodesic version of the model 
on the graph on the left-hand side of Figure~\ref{fig:c_ex}. 
Note that each ant reinforces either the $L$ edges on the left (and the edge linked to $F$) 
or the $L$ edges on the right (and the edge linked to $F$). 
{Thus, the $L$ edges on the left have all the same weight at all times, 
and similarly for the the $L$ edges on the right.}
For all integers~$n$, we set $N_1(n)$ to be the weight of the $L$ left-edges at time~$n$, 
and by $N_2(n)$ the weights of the right-edges at time $n$. 
By definition, we have that $N_1(n)+N_2(n) = n+2$, which is also the weight of the edge linked to $F$.

To prove Proposition~\ref{prop:ex_Daniel}, we apply a result of~\cite{HLS} (see~\cite[Theorem~2.8]{Pemantle}):
\begin{theorem}\label{th:HLS}
Let $(Z_n)_{n\geq 0}$ be a sequence of random variables taking values in $[0,1]$ satisfying, for all $n\geq 0$,
\begin{equation}\label{eq:HLS}
Z_{n+1} = Z_n +\frac1n\big(F(Z_n)+\Delta M_{n+1}\big),
\end{equation}
where $F : [0,1]\to [0,1]$ and $\Delta M_{n+1}$ is a martingale increment.
If there exists $\varepsilon>0$ such that $F<0$ on $[0,\varepsilon]$, then $\mathbb P(Z_n\to 0)>0$.
\end{theorem}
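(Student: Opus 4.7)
The plan is to show that with positive probability the process $(Z_n)_{n \geq 0}$ eventually enters and is trapped in a small neighborhood of $0$, inside the region $[0, \varepsilon]$ on which the drift $F$ is strictly negative, and then to argue that the trapped trajectory must in fact converge to $0$. The argument combines a martingale tail estimate with a deterministic induction exploiting the sign of $F$ near $0$.

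First I would note that, to make sense of the stochastic approximation~\eqref{eq:HLS}, the martingale increments $\Delta M_{n+1}$ must be suitably controlled---typically uniformly bounded, or at least $L^2$-bounded so that $\sum_{n \geq 1} \mathbb{E}[(\Delta M_{n+1})^2]/n^2 < \infty$. Under such an assumption the series
\[\widetilde M_n := \sum_{k=1}^{n} \frac{\Delta M_{k+1}}{k}\]
defines an $L^2$-bounded martingale, which therefore converges almost surely. In particular, Doob's $L^2$ maximal inequality gives: for any $\eta>0$, one can find $n_0 = n_0(\eta)$ large enough that
\[\mathbb P\Big(\sup_{n \geq n_0}\big|\widetilde M_n - \widetilde M_{n_0-1}\big| \leq \varepsilon/3\Big) \geq 1 - \eta.\]

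Next, fix such an $n_0$. Assuming (as is standard in this setting, and will be verified in the application to Proposition~\ref{prop:ex_Daniel}) that $\mathbb P(Z_{n_0} \leq \varepsilon/3) > 0$ for $n_0$ large enough, choosing $\eta < \mathbb P(Z_{n_0} \leq \varepsilon/3)$ produces an event
\[A := \{Z_{n_0} \leq \varepsilon/3\} \cap \Big\{\sup_{n \geq n_0}\big|\widetilde M_n - \widetilde M_{n_0-1}\big| \leq \varepsilon/3\Big\}\]
of positive probability. On $A$, I would prove by induction on $n \geq n_0$ that $Z_n \leq \varepsilon$: iterating~\eqref{eq:HLS} yields
\[Z_{n+1} = Z_{n_0} + \sum_{k=n_0}^{n} \frac{F(Z_k)}{k} + \big(\widetilde M_{n+1} - \widetilde M_{n_0-1}\big),\]
and if the inductive hypothesis $Z_k \in [0,\varepsilon]$ holds for every $n_0 \leq k \leq n$, then each $F(Z_k) \leq 0$, so $Z_{n+1} \leq \varepsilon/3 + 0 + \varepsilon/3 < \varepsilon$, closing the induction.

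Finally, on $A$ the trajectory stays in $[0, \varepsilon]$ forever, so $F(Z_k) \leq 0$ for all $k \geq n_0$. Since the left-hand side $Z_{n+1}$ is non-negative and the martingale tail converges, the non-positive series $\sum_{k \geq n_0} F(Z_k)/k$ must converge. Under the standing continuity/uniformity condition that $F \leq -c(\eta) < 0$ on $[\eta, \varepsilon]$ for every $\eta > 0$ (implicit in the HLS framework), summability together with $\sum 1/k = \infty$ forces $Z_k \to 0$. Hence $\mathbb P(Z_n \to 0) \geq \mathbb P(A) > 0$. The main obstacle is the accessibility step $\mathbb P(Z_{n_0} \leq \varepsilon/3) > 0$: the bare hypothesis $F<0$ near $0$ does not by itself ensure that the process visits a neighborhood of $0$, and this is where either model-specific information or a mild additional assumption (e.g.\ the chain irreducibility, or that $Z_0$ is already small enough) must enter.
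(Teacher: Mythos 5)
The paper does not actually prove Theorem~\ref{th:HLS}: it is quoted from \cite{HLS} (via Theorem~2.8 of \cite{Pemantle}) and used as a black box in the proof of Proposition~\ref{prop:ex_Daniel}, so there is no in-paper argument to compare yours against. Your proof is the standard trapping argument behind that result and is essentially sound: Doob's $L^2$ maximal inequality controls the martingale tail, the induction confines $Z_n$ to $[0,\varepsilon]$ on the event $A$, and the drift series is then a non-positive series with bounded partial sums, hence convergent.

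Two remarks. First, your final step is compressed in a way that matters: the claim that convergence of $\sum_k F(Z_k)/k$ together with $\sum_k 1/k=\infty$ ``forces $Z_k\to0$'' is not valid as stated, since $Z_k$ could a priori exceed $\eta$ only along a sparse subsequence. You should first observe that on $A$ the decomposition $Z_{n+1}=Z_{n_0}+\sum_{k=n_0}^nF(Z_k)/k+(\widetilde M_{n+1}-\widetilde M_{n_0-1})$ exhibits $Z_n$ as a convergent sequence, and only then rule out a limit $\ell\in(0,\varepsilon]$ by noting that $F\le -c(\ell/2)<0$ on $[\ell/2,\varepsilon]$ would make the drift series diverge to $-\infty$. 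Second, you are right that the statement as printed is incomplete: bounded (or square-integrable) martingale increments, enough regularity of $F$ to be bounded away from $0$ on compact subsets of $(0,\varepsilon]$, and attainability of a neighbourhood of $0$ are all genuinely needed, and they are hypotheses of the original theorem (note also that $F:[0,1]\to[0,1]$ is incompatible with $F<0$ and should read $F:[0,1]\to[-1,1]$). In the application to Proposition~\ref{prop:ex_Daniel} attainability is harmless, since with positive probability finitely many consecutive walkers all reinforce the right-hand geodesic, driving $Z_{n_0}$ below $\varepsilon/3$.
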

First note that, if $Z_n = N_1(n)/(n+2)$ for all $n\geq 0$, then $Z_n$ satisfies Equation~\eqref{eq:HLS} with
$F(x) = p(x)-x$, where $p(x)$ is the probability that an ant reinforces 
the left-hand side geodesic after performing a random walk on $\mathcal G$ 
with weights $x$ on all edges on the left, $1-x$ on all edges on the right, and~1 on the edge linked to $F$.
We let $\mathcal G(x)$ denote the graph $\mathcal G$ equipped with these weights, 
$P$ denote the unique vertex neighbouring~$F$, and for all $k\in\{0, \ldots, L-1\}$, 
$A_k$ denote the vertex at distance~$k$ of $N$ on the right-hand-side geodesic (with $A_0 = N$).
See Figure~\ref{fig:notations} where the notations are illustrated.

\begin{figure}
\begin{center}
\includegraphics[width=2.3cm]{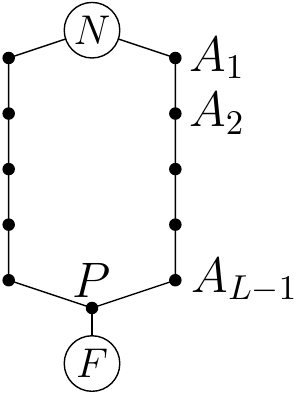}
\end{center}
\caption{The graph of Proposition~\ref{prop:ex_Daniel} and the notations used in Section~\ref{sec:ex_Daniel}.}
\label{fig:notations}
\end{figure}

We now calculate $p(x)$ when $x\to0$ to show that $p(x)<x$ in a neighbourhood of zero; 
this implies that $F<0$ in a neighbourhood of zero and thus that Theorem~\ref{th:HLS} applies. 
 Asymptotically when $x\to0$,
\begin{equation}\label{eq:p(x)}
p(x) = \sum_{k=0}^{L-1} \Big(p^{\sss (1)}_k(x) + \frac12p^{\sss (2)}_k(x)\Big) 
+ \frac12 p^{\sss (3)}(x) + \frac12 p^{\sss (4)}(x) + \mathcal O(x^2),
\end{equation}
where, for all $k\in \{0, \ldots, L-1\}$,
\begin{itemize}
\item $p^{\sss (1)}_k(x)$ the probability that a walker on the weighted graph $\mathcal G(x)$
goes from $N$ to $A_k$ using only edges on the right-hand-side geodesic, 
then goes from {$A_k$ to $N$ without reaching $A_{k+1}$, then goes from $N$ to $P$ without reaching $A_{k+1}$}, 
and, finally, goes from $P$ to $F$ without using the left-hand-side geodesic or reaching $A_k$;
\item $p^{\sss (2)}_k(x)$ the probability that a walker on the weighted graph $\mathcal G(x)$
goes from $N$ to $A_k$ using only edges on the right-hand-side geodesic, 
then goes from {$A_k$ to $P$ without reaching $A_{k+1}$ (thus using edges on the left-hand-side geodesic)}, 
and, finally, goes from $P$ to $A_k$ using only edges on the right-hand side geodesic;
\item $p^{\sss (3)}(x)$ the probability that a walker on the weighted graph $\mathcal G(x)$
first goes from $N$ to $P$ only using edges on the right-hand-side geodesic, 
then goes {from $P$ to $N$ using edges on the left-hand-side geodesic, and before entering the left-hand geodesic from $N$};
\item $p^{\sss (4)}(x)$ the probability that a walker on the weighted graph $\mathcal G(x)$
first goes from $N$ to $P$ only using edges on the right-hand-side geodesic, 
then goes back from $P$ to $N$ only using edges on the right-hand-side geodesic, and,
finally, goes {from $N$ to $P$ using edges on the left-hand-side geodesic and before entering the left-hand-side geodesic from $P$ or hitting $F$}.
\end{itemize}
The $\mathcal O(x^2)$-term in Equation~\eqref{eq:p(x)} stands for all trajectories 
of the walker that leave $N$ or $P$ at least twice towards the left.
We have, if $k\in\{1, \ldots, L-1\}$, 
\[p^{\sss (1)}_k(x)
= \frac{\frac{1-x}k}{\frac{1-x}k+x}\cdot \frac{\frac{1-x}k}{\frac{1-x}k+(1-x)} \cdot \frac{\frac xL}{\frac xL+\frac{(1-x)}{k+1}}\cdot\frac{1}{1+x+\frac{1-x}{L-k}}
= \frac{x}{L}\cdot\frac{L-k}{L-k+1} + \mathcal O(x^2).
\]
We also have
\[p^{\sss (1)}_0(x) = \frac{\frac xL}{\frac xL+(1-x)}\cdot\frac{1}{1+x+\frac{1-x}{L}}
=\frac{x}{L}\cdot \frac{L}{L+1}+\mathcal O(x^2).
\]
Using the fact that $\sum_{i=1}^n \nicefrac1i = \log n+ \mathcal O(1)$ when $n\to+\infty$, we get 
\begin{equation}\label{eq:p1}
\sum_{k=0}^{L-1} p_k^{\sss (1)} (x)
= \frac xL\sum_{k=0}^{L-1} \Big(1-\frac{1}{L-k+1}\Big) + \mathcal O(x^2)
= x\left(1-\frac{\log L}{L} + \mathcal O_{L\to+\infty}(1)\right) + \mathcal O(x^2),
\end{equation}
where the $\mathcal O_{L\to+\infty}(1)$-term does not depend on $x$ and corresponds to the $L\to+\infty$ limit, 
while the $\mathcal O(x^2)$-term depends on $L$ and refers to the $x\to0$ limit.
Similarly, for all $k\in\{1, \ldots, L-1\}$, we have 
\[p_k^{\sss (2)}(x)
=\frac{\frac{1-x}k}{\frac{1-x}k+x}\cdot\frac{\frac{1-x}k}{\frac{1-x}k+1-x}\cdot \frac{\frac xL}{\frac xL+\frac{1-x}{k+1}}\cdot \frac{\frac{1-x}{L-k}}{\frac{1-x}{L-k}+1+x}
= \frac{x}{L}\cdot \frac1{L-k+1},
\]
and 
\[p_0^{\sss (2)}(x)
=\frac{\frac xL}{\frac xL+(1-x)}\cdot \frac{\frac{1-x}{L}}{\frac{1-x}{L}+1+x}
=\frac xL\cdot \frac1{L+1}+\mathcal O(x^2).
\]
Using again the asymptotic behaviour of the harmonic sum, we get 
\begin{equation}\label{eq:p2}
\frac12\sum_{k=0}^{L-1} p_k^{\sss (1)}(x)
=\frac xL\sum_{k=0}^{L-1}\frac1{L-k+1}+\mathcal O(x^2)
=x\left(\frac{\log L}{2L}(1+o_{L\to+\infty}(1))\right)  + \mathcal O(x^2).
\end{equation}
We also have, when $x\to0$, 
\[p^{\sss (3)}(x) = \frac{\frac{1-x}L}{\frac{1-x}L+x}\cdot \frac{\frac xL}{\frac xL+1+\frac{x\frac{1-x}{L}}{x+\frac{1-x}{L}}}
= \frac x{L} + \mathcal O(x^2),\]
and
\[p^{\sss (4)}(x)= \frac{\frac{1-x}L}{\frac{1-x}L+x}\cdot \frac{\frac{1-x}L}{\frac{1-x}L+1+x}\cdot \frac{\frac xL}{\frac xL+\frac{(1+x)\frac{1-x}{L}}{1+x+\frac{1-x}{L}}}
= \frac x{L}+\mathcal O(x^2).
\]
Using these last equations together with~\eqref{eq:p1} and~\eqref{eq:p2} 
into Equation~\eqref{eq:p(x)}, we get that, in total, 
\[p(x) =x\left(1-\frac{\log L}{2L} (1+ o_{L\to+\infty}(1))\right)+\mathcal O(x^2).\]
Therefore,
\[F(x) = p(x)-x =-x\left(\frac{\log L}{2L}(1+ o_{L\to+\infty}(1))\right)+\mathcal O(x^2),\]
implying that for all $L$ large enough, $F$ is indeed negative in a right-neighbourhood of~$0$.
Hence, Theorem~\ref{th:HLS} applies and we conclude that $\mathbb P(Z_n\to 0)>0$,
which concludes the proof of Proposition~\ref{prop:ex_Daniel}.

\bibliographystyle{alpha}
\bibliography{fourmis_arxiv}
\end{document}